\let\@wraptoccontribs\wraptoccontribs
\def\ni{\noindent}
\def\dim{\mathop{\rm dim}}
\def\e{\mathop{\varepsilon}}
\numberwithin{equation}{subsection}
\theoremstyle{plain}
\newtheorem*{theorema*}{Theorem A}
\newtheorem*{thm*}{Theorem}
\newtheorem*{theorem1*}{Theorem 1}
\newtheorem*{theorem2*}{Theorem 2}
\newtheorem{thm}[equation]{Theorem}
\newtheorem{prop}[equation]{Proposition}
\newtheorem{lemma}[equation]{Lemma}
\newtheorem{lem}[equation]{Lemma} 
\newtheorem{cor}[equation]{Corollary}
\newtheorem{conj}[equation]{Conjecture}
\newtheorem*{conj*}{Conjecture}
\theoremstyle{definition}
\newtheorem{defn}[equation]{Definition}
\newtheorem{example}[equation]{Example}
\newtheorem{alg}[equation]{Algorithm}
\theoremstyle{remark}
\newtheorem{remark}[equation]{Remark}
\newtheorem{rmk}[equation]{Remark}
\newenvironment{enumalph}
{\begin{enumerate}}
{\end{enumerate}}
\newenvironment{enumroman}
{\begin{enumerate}}
{\end{enumerate}}
\newenvironment{enumalg}
{\begin{enumerate}}
{\end{enumerate}}
\newcommand{\Z}{\mathbb{Z}}
\newcommand{\Q}{\mathbb{Q}}
\newcommand{\R}{\mathbb{R}}
\newcommand{\F}{\mathbb{F}}
\newcommand{\Fl}{\F_\ell}
\newcommand{\A}{\mathbb{A}}
\newcommand{\C}{\mathbb{C}}
\newcommand{\coho}{\mathrm{H}}
\newcommand{\cocyc}{\mathrm{Z}}
\newcommand{\cobound}{\mathrm{B}}
\newcommand{\redG}{\mathrm{G}}
\newcommand{\Liesp}{\mathbf{\mathsf{sp}}}
\newcommand{\HeckeT}{\mathrm{T}}
\newcommand{\Half}{\mathcal H}
\newcommand{\SiegelH}{\mathcal{H}}
\newcommand{\legen}[2]{\biggl(\displaystyle{\frac{#1}{#2}}\biggr)}
\newcommand{\calP}{\mathcal{P}}
\newcommand{\defi}[1]{\emph{\textsf{#1}}} 	% for defined terms
\newcommand{\al}{\textup{al}}  
\DeclareMathOperator{\ad}{ad}
\DeclareMathOperator{\cond}{cond}
\DeclareMathOperator{\Jac}{Jac}
\newcommand{\M}{\mathrm{M}}
\DeclareMathOperator{\SO}{SO}
\DeclareMathOperator{\Obc}{Obc}
\DeclareMathOperator{\GL}{GL}
\DeclareMathOperator{\diag}{diag}
\DeclareMathOperator{\Lie}{Lie}
\DeclareMathOperator{\im}{img}
\DeclareMathOperator{\img}{\im}
\DeclareMathOperator{\GSp}{GSp}
\DeclareMathOperator{\Sp}{Sp}
\DeclareMathOperator{\SU}{SU}
\DeclareMathOperator{\Gal}{Gal}
\DeclareMathOperator{\Aut}{Aut}
\DeclareMathOperator{\Frob}{Frob}
\DeclareMathOperator{\Ob}{Ob}
\DeclareMathOperator{\ord}{ord}
\DeclareMathOperator{\antidiag}{antidiag}
\DeclareMathOperator{\spin}{spin}
\DeclareMathOperator{\utr}{utr}
\newcommand{\id}{\mathrm{id}}
\newcommand{\indic}{\mathbf{1}}
\newcommand{\rhobar}{\overline{\rho}}
\DeclareMathOperator{\tr}{\mathrm{tr}}
\DeclareMathOperator{\Tr}{\mathrm{Tr}}
\DeclareMathOperator{\End}{\mathrm{End}}
\DeclareMathOperator{\TB}{\mathrm{TB}}
\DeclareMathOperator{\Grit}{\mathrm{Grit}}
\newcommand{\frakl}{\mathfrak{l}}
\newcommand{\frakm}{\mathfrak{m}}
\newcommand{\frakp}{\mathfrak{p}}
\newcommand{\Qbar}{{\Q^\al}}
\newcommand{\Abar}{{A^\al}}
\newcommand{\Fbar}{{F^\al}}
\newcommand{\psmod}[1]{~(\textup{\text{mod}}~{#1})}
\newcommand{\sqtimes}{\boxtimes}
\definecolor{darkred}{HTML}{CC1F1F}
\definecolor{green}{rgb}{.4,.7,.4}
\definecolor{blue}{rgb}{.2,.6,.75}
\definecolor{pastelyellow}{rgb}{0.992157, 0.552941, 0.235294}
\definecolor{pastelorange}{rgb}{0.941176, 0.231373, 0.12549}
\definecolor{pastelred}{rgb}{0.741176, 0., 0.14902}
\definecolor{darkbrown}{rgb}{0.25098, 0., 0.0745098}
\newcommand\mymat[4]{
{\left(
\begin{smallmatrix}#1&#2\\#3&#4\end{smallmatrix}
\right)}}
\def\LP{Laurent--Puiseux}  
\def\cusp{^{\text{cusp}}}
\def\inv{^{-1}}
\def\transpose{^{\mathsf T}}
\DeclareMathOperator{\BP}{Borch}
\DeclareMathOperator{\Coeff}{coeff}
\def\myT{T} %generic paramodular index
\def\myA{G}\def\myB{H} % for use in restriction to \myA\tau+\myB
\begin{document}

\title[On the paramodularity of typical abelian surfaces]{On the paramodularity of typical abelian surfaces \\ (and reduction of $G$-covariant bilinear forms)}

\author{Armand Brumer}
\address{Department of Mathematics, Fordham University, Bronx, NY 10458}
\email{brumer@fordham.edu}
\urladdr{}

\author{Ariel Pacetti} 
\address{FAMAF-CIEM, Universidad Nacional de
  C\'ordoba. C.P:5000, C\'ordoba, Argentina.}
\email{apacetti@famaf.unc.edu.ar}
\urladdr{}

\author{Cris Poor}
\address{Department of Mathematics, Fordham University, Bronx, NY 10458}
\email{poor@fordham.edu}
\urladdr{}

\author{Gonzalo Tornar{\'\i}a}
\address{Universidad de la Rep\'ublica, Montevideo, Uruguay}
\email{tornaria@cmat.edu.uy}
\urladdr{}

\author{John Voight}
\address{Department of Mathematics, Dartmouth College, 6188 Kemeny Hall, Hanover, NH 03755, USA}
\email{jvoight@gmail.com}
\urladdr{\url{http://www.math.dartmouth.edu/~jvoight/}}

%\author{David S.\ Yuen}
%\address{Department of Mathematics and Computer Science, Lake Forest University, Lake Forest, IL 60045}
%\email{yuen@lakeforest.edu}
\author[D.~Yuen]{David S.~Yuen \\ (appendix by J.P.~Serre)}
\address{Department of Mathematics, University of Hawaii, Honolulu, HI 96822 USA}
\email{yuen@math.hawaii.edu}

\address{Coll\`{e}ge de France, 3 rue d'Ulm, Paris}
\email{jpserre691\@gmail.com}

\subjclass[2010]{11F46, 11Y40} 

\begin{abstract}
Generalizing the method of Faltings--Serre, we rigorously verify that certain abelian surfaces without extra endomorphisms are paramodular.  To compute the required Hecke eigenvalues, we develop a method of specialization of Siegel paramodular forms to modular curves.
In the appendix, Serre proves a result extending his work on the reduction of $G$-invariant bilinear forms modulo primes to the case of $G$-covariant forms.
\end{abstract}

\date{\today}

\maketitle

\setcounter{tocdepth}{1}
\tableofcontents

\section{Introduction}

\subsection{Paramodularity}

The Langlands program predicts deep connections between geometry and automorphic forms, encoded in associated $L$-functions and Galois representations.  The celebrated modularity of elliptic curves $E$ over $\Q$ \cite{Wiles,TW,BCDT} provides an important instance of this program: to the isogeny class of $E$ of conductor $N$, we associate a classical cuspidal newform $f \in S_2(\Gamma_0(N))$ of weight $2$ and level $N$ with rational Hecke eigenvalues such that $L(E,s)=L(f,s)$, and conversely.  In particular, $L(E,s)$ shares the good analytic properties of $L(f,s)$ including analytic continuation and functional equation, and the $\ell$-adic Galois representations of $E$ and of $f$ are equivalent.  More generally, by work of Ribet \cite{Ribet92} and the proof of Serre's conjecture by Khare--Wintenberger \cite{KW1,KW2}, isogeny classes of abelian varieties $A$ of dimension $d$, of $\GL_2$-type over $\Q$, and of conductor $N^d$ are in bijection with Galois orbits of classical cuspidal newforms $f \in S_2(\Gamma_1(N))$, with matching (imprimitive) $L$-functions and $\ell$-adic Galois representations.

Continuing this program, let $A$ be an abelian surface over $\Q$; for instance, we may take $A=\Jac(X)$ the Jacobian of a curve of genus $2$ over $\Q$.  We suppose that $\End(A)=\Z$, i.e., $A$ has minimal endomorphisms defined over $\Q$, and in particular $A$ is \emph{not} of $\GL_2$-type over $\Q$.  For example, if $A$ has prime conductor, then $\End(A)=\Z$ by a theorem of Ribet (see Lemma \ref{lem:squarefree}).  A conjecture of H.~Yoshida \cite{Yoshida1,Yoshida2} compatible with the Langlands program is made precise by a conjecture of Brumer--Kramer \cite[Conjecture 1.1]{BK14}, restricted here for simplicity.

\begin{conj}[Brumer--Kramer] \label{conj:BrumKram00}
To every abelian surface $A$ over $\Q$ of conductor $N$ with $\End(A)=\Z$, there exists a cuspidal Siegel paramodular newform $f$ of degree~$2$, weight $2$, and level $N$ with rational Hecke eigenvalues that is not a Gritsenko lift, such that 
\begin{equation} \label{eqn:LBsLfs}
L(A,s)=L(f,s,\spin). 
\end{equation}
Moreover, $f$ is unique up to (nonzero) scaling and depends only on the isogeny class of $A$; and if $N$ is squarefree, then this association is bijective.
\end{conj}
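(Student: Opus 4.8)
The plan is to treat Conjecture~\ref{conj:BrumKram00} as the genus-$2$ analogue of the modularity of elliptic curves: a general proof is out of reach, so I would instead aim to \emph{verify} it for a prescribed abelian surface $A/\Q$ with $\End(A)=\Z$. Fix an auxiliary prime $\ell$ and let $\rho_{A,\ell}\colon G_\Q\to\GSp_4(\Z_\ell)$ be the representation on $T_\ell A$; it is unramified outside $N\ell$ and its similitude character is the $\ell$-adic cyclotomic character, so for $p\nmid N\ell$ the characteristic polynomial of $\Frob_p$ has the shape $x^4-a_p(A)x^3+b_p(A)x^2-p\,a_p(A)x+p^2$. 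On the automorphic side I would first \emph{produce a candidate}: compute (or look up) spaces of Siegel paramodular cusp forms of weight~$2$ and level $N$ to find a nonlift newform $f$ with rational eigenvalues whose leading eigenvalues $\lambda_p(f)$ agree with the point counts $a_p(A)$ for small $p$ — computing these $\lambda_p(f)$ is itself hard, and is exactly what the specialization-to-modular-curves method is for. Granting the existence of a compatible system $\rho_{f,\ell}\colon G_\Q\to\GSp_4(\overline{\Q}_\ell)$ attached to $f$ with matching conductor and similitude character (weight~$2$ being the delicate, non-regular case that must be cited with care), the target reduces to proving $\rho_{A,\ell}\cong\rho_{f,\ell}$, whence $L(A,s)=L(f,s,\spin)$ by comparing Euler factors.

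To establish the isomorphism I would run a Faltings--Serre--Livn\'e argument adapted to $\GSp_4$. Reduce both representations mod~$\ell$; choosing $\ell$ small (ideally $\ell=2$) and controlling the residual image, first prove $\overline{\rho}_{A,\ell}\cong\overline{\rho}_{f,\ell}$, e.g.\ by identifying the number field cut out by the torsion and checking a short list of Frobenius classes. Then bound the ramification of both lifts (only at $p\mid N\ell$) and use this to bound the finite extension $K/\Q$ through which any obstruction to $\rho_{A,\ell}\equiv\rho_{f,\ell}$ could factor; Chebotarev over $K$ then reduces the problem to verifying, for all $p$ in an explicit finite set $S$ determined by $K$, that $a_p(A)=\lambda_p(f)$ together with the second coefficient $b_p$ of the characteristic polynomial of $\Frob_p$ (one needs the full characteristic polynomial, not just its trace, to pin down a $\GSp_4$-valued system). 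Finally, carry out those finitely many comparisons: $a_p(A),b_p(A)$ by counting points on a genus-$2$ model of $A$ over $\F_p$ and $\F_{p^2}$, and $\lambda_p(f)$ and the quadratic coefficient by the paramodular computation.

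The main obstacle will be the \emph{size and effectivity} of this machinery for $\GSp_4$: one must show the image of $\rho_{A,\ell}$ is as large as possible (containing $\Sp_4(\Z_\ell)$, using $\End(A)=\Z$ together with results on the monodromy of typical abelian surfaces) and then bound the deviation tightly enough that the test set $S$ is small enough to handle in practice; a poorly controlled image makes $S$ explode. Secondary genuine difficulties are the automorphic input — existence and local-global compatibility of $\rho_{f,\ell}$ in weight~$2$, and ensuring $f$ is not a Saito--Kurokawa, Yoshida, or Eisenstein lift so that $\rho_{f,\ell}$ is irreducible with the expected image — and the raw computation of the $\lambda_p(f)$. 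For the ``moreover'' clauses I would invoke strong multiplicity one for $\GSp_4$ (uniqueness of $f$ up to scaling), Faltings' isogeny theorem (the $\ell$-adic system, hence $L(A,s)$, determines the isogeny class of $A$, giving dependence only on the isogeny class), and Lemma~\ref{lem:squarefree} (squarefree $N$ forces $\End(A)=\Z$), so that over squarefree levels the correspondence is injective both ways; surjectivity — that \emph{every} such $f$ comes from an $A$ — is the part that remains genuinely beyond current reach, consistent with the paper only verifying specific cases.
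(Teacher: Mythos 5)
This is a \emph{conjecture}, not a theorem: the paper does not prove Conjecture~\ref{conj:BrumKram00} in general, and there is no ``paper's own proof'' to compare against. To your credit, you recognize this immediately and reinterpret the task as verifying the conjecture for a prescribed $A$. Your plan --- take $\ell=2$, attach a $\GSp_4(\Z_2)$-valued system $\rho_{f,\ell}$ to a candidate paramodular form, prove $\rho_{A,2}\simeq\rho_{f,2}$ by a Faltings--Serre argument, and compute $\lambda_p(f)$ by restricting $f$ to a modular curve --- is precisely the strategy the paper carries out to establish Theorem~\ref{thm:mainthm277} and its companions at levels $353$ and $587$, and you correctly flag the genuine difficulties (the automorphic input at weight~$2$, ruling out lifts, the effectivity of the test set).

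Two technical points in your sketch would cause trouble if implemented as written. First, you assert that one must track ``the second coefficient $b_p$ of the characteristic polynomial\dots (one needs the full characteristic polynomial, not just its trace, to pin down a $\GSp_4$-valued system).'' The paper instead relies on Carayol's theorem (Theorem~\ref{thm:Brauer-Nesbit}): when the common residual representation is absolutely irreducible and the similitude characters agree, equality of traces of Frobenius at a finite Chebotarev test set already forces $\rho_{A,2}\simeq\rho_{f,2}$; the second coefficient enters only as an auxiliary aid for narrowing the residual image (as in Lemma~\ref{lem:residuallyagree}), not as a required datum in the obstruction step. Insisting on $b_p$ would needlessly multiply the eigenvalue computations, which are the expensive part. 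Second, you say one must show the image of $\rho_{A,2}$ is ``as large as possible (containing $\Sp_4(\Z_\ell)$)'' --- this is both stronger than needed and false in the examples. The paper only requires absolute irreducibility of $\overline{\rho}_{A,2}$, which can hold for small images: at $N=353$ the residual image is $S_3 \wr S_2$ of order $72$, far from $\Sp_4(\F_2)\simeq S_6$. The Faltings--Serre machinery of sections~\ref{sec:faltingsserre}--\ref{sec:GSp4} is deliberately organized around an arbitrary absolutely irreducible residual image $G\le\GSp_4(\F_2)$, with the extension groups $E\le\Liesp_4(\F_2)\rtimes G$ and their core-free subgroups computed per~$G$; requiring full image would exclude two of the three cases the paper treats. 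On the ``moreover'' clauses you offer plausible supporting ideas (strong multiplicity one, Faltings isogeny, Lemma~\ref{lem:squarefree}), but be aware the paper neither claims nor proves them --- they remain part of the conjectural statement, with surjectivity, as you note, entirely out of reach.
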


Conjecture \ref{conj:BrumKram00} is often referred to as the \defi{paramodular conjecture}; in what follows, we say \defi{nonlift} for not a Gritsenko lift.   As pointed out by Frank Calegari, in general it is necessary to include abelian fourfolds with quaternionic multiplication for the converse assertion: for a precise statement for arbitrary $N$ and further discussion, see Brumer--Kramer \cite{BKcorrig}.  

Extensive experimental evidence  \cite{BK14,PY15} supports Conjecture \ref{conj:BrumKram00}.  There is also theoretical evidence for this conjecture when the abelian surface $A$ is potentially of $\GL_2$-type, acquiring extra endomorphisms over a quadratic field: see Johnson-Leung--Roberts \cite{JLR12} for real quadratic fields, Berger--Demb\'el\'e--Pacetti--\c{S}eng\"un \cite{BDPS15} for imaginary quadratic fields, and Demb\'el\'e--Kumar \cite{DemKum} for explicit examples.  For a complete treatment of the many possibilities for the association of modular forms to abelian surfaces with potentially extra endomorphisms, see work of Booker--Sijsling--Sutherland--Voight--Yasaki \cite{BSSVY}.  What remains is the case where $\End(A_{\Qbar})=\Z$, which is to say that $A$ has minimal endomorphisms defined over the algebraic closure $\Qbar$; we say then that $A$ is \defi{typical}.  (We do not say \emph{generic}, since it is not a Zariski open condition on the moduli space.)  

Recently, there has been dramatic progress in modularity lifting theorems for nonlift Siegel modular forms (i.e., forms not \emph{of endoscopic type}): see Pilloni \cite{Pilloni} for $p$-adic overconvergent modularity lifting, as well as recent work by Calegari--Geraghty \cite[\S 1.2]{CG}, Berger--Klosin with Poor--Shurman--Yuen \cite{BKPSY} establishing modularity in the reducible case when certain congruences are provided, and a recent manuscript by Boxer--Calegari--Gee--Pilloni \cite{BCGP} establishing potential modularity over totally real fields.  

\subsection{Main result}

For all \emph{prime} levels $N<277$, the paramodular conjecture is known: there are no paramodular forms of the specified type by work of Poor--Yuen \cite[Theorem 1.2]{PY15}, and correspondingly there are no abelian surfaces by work of Brumer--Kramer \cite[Proposition 1.5]{BK14}.  At level $N=277$, there exists a cuspidal, nonlift Siegel paramodular cusp form, unique up to scalar multiple, by work of Poor--Yuen \cite[Theorem 1.3]{PY15}: this form is given explicitly as a rational function in Gritsenko lifts of ten weight $2$ theta blocks---see \eqref{eqn:fasQL277}.  

Our main result is as follows.

\begin{thm} \label{thm:mainthm277}
Let $X$ be the curve over $\Q$ defined by
\[ y^2+(x^3+x^2+x+1)y=-x^2-x; \]
let $A=\Jac(X)$ be its Jacobian, a typical abelian surface over $\Q$ of conductor $277$.  Let $f$ be the cuspidal, nonlift Siegel paramodular form of genus $2$, weight $2$, and conductor $277$, unique up to scalar multiple.  Then 
\[ L(A,s) = L(f,s,\spin). \]
\end{thm}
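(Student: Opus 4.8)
The plan is to prove the equality of $L$-functions by establishing an equivalence of the associated $2$-adic (or, to be safe, $\ell$-adic for a well-chosen small $\ell$) Galois representations, using a Faltings--Serre style argument adapted to $\GSp_4$. On the automorphic side, attached to the paramodular newform $f$ of weight $2$ and level $277$ is a compatible system of $\ell$-adic Galois representations $\rho_{f,\ell}\colon G_\Q \to \GSp_4(\overline{\Q}_\ell)$, unramified outside $277\ell$, with similitude character the cyclotomic character, and with $L(f,s,\spin)=L(\rho_{f,\ell},s)$. On the motivic side, $\rho_{A,\ell}$ is the representation on the $\ell$-adic Tate module $V_\ell A$, which is symplectic of the same dimension, conductor $277$, and similitude character cyclotomic; since $277$ is prime, Lemma~\ref{lem:squarefree} gives $\End(A)=\Z$ so $\rho_{A,\ell}$ is (expected to be) irreducible with large image. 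The goal is to show $\rho_{A,\ell}\cong\rho_{f,\ell}$ for a single $\ell$, which by Chebotarev and the Brauer--Nesbitt / compatible-system formalism upgrades to the identity of $L$-functions.

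First I would fix a prime $\ell$ (the natural choice is $\ell=2$, but $\ell=3$ may be more convenient for controlling residual images) and analyze the residual representations $\overline{\rho}_{A,\ell}$ and $\overline{\rho}_{f,\ell}$ into $\GSp_4(\F_\ell)$. The first substep is to pin down the image of $\overline{\rho}_{A,\ell}$: compute the Galois action on $A[\ell]$ explicitly from the hyperelliptic model (characteristic polynomials of Frobenius $\Frob_p$ at several good primes $p$, read off from point counts on $X$ over $\F_p$ and $\F_{p^2}$), and show the mod-$\ell$ image is as large as possible (all of $\GSp_4(\F_\ell)$, or an explicitly identified subgroup) — large enough that it has no nontrivial self-twists and that the relevant defect/obstruction groups in the Faltings--Serre machine vanish. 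The second substep is to verify $\overline{\rho}_{A,\ell}\cong\overline{\rho}_{f,\ell}$: this needs the mod-$\ell$ Hecke eigenvalues of $f$, and here the paper's specialization method enters — one specializes the Siegel paramodular form $f$ (given via Gritsenko lifts of theta blocks, cf. \eqref{eqn:fasQL277}) to modular curves to extract enough Hecke eigenvalues $a_p(f), a_{p^2}(f)$ mod $\ell$ to match the characteristic polynomials of $\Frob_p$ on both sides for $p$ up to an effective Faltings--Serre bound.

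With the residual isomorphism and the image computation in hand, the core step is the deformation-theoretic / group-cohomological argument: one shows that any two lifts of the common residual representation with the same similitude character, unramified outside $277\ell$, and with matching characteristic polynomials of Frobenius on a finite effectively computable set $S$ of primes (a set whose Frobenius classes generate the relevant quotient of $G_{\Q,\{277\ell\}}$, detected via a cohomology computation à la Livné's criterion or its $\GSp_4$ refinements, e.g. following Dieulefait--Guerberoff--Pacetti or the framework used for such Faltings--Serre applications) must be isomorphic. Concretely: the obstruction to the two lifts being conjugate lies in an $H^1$ with coefficients in $\ad^0\overline{\rho}$ (or $\Sym$-type subrepresentations), and one bounds/kills this group using the largeness of the image plus ramification constraints, reducing the whole problem to a finite check of Frobenius data at the primes in $S$. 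That finite check is then completed: for $p\in S$ compare $\det(1 - \rho_{A,\ell}(\Frob_p)\, T)$, computed from point-counting on $X$, against $\det(1-\rho_{f,\ell}(\Frob_p)\,T)$, computed from the specialization-derived Hecke eigenvalues and the standard formula expressing the spin Euler factor in terms of $a_p(f)$ and $a_{p^2}(f)$.

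The main obstacle I expect is twofold and intertwined: (i) producing \emph{enough} provably-correct Hecke eigenvalues of the paramodular form $f$ — the bottleneck the paper's specialization-to-modular-curves technique is designed to overcome, since direct computation with $\GSp_4$ Hecke operators at the needed primes is infeasible, and one must prove the specialization identity rigorously rather than heuristically; and (ii) the image/irreducibility analysis of $\overline{\rho}_{A,\ell}$ together with the cohomological vanishing that makes the Faltings--Serre set $S$ finite and explicit — ruling out the various proper subgroups of $\GSp_4(\F_\ell)$ and the endoscopic/reducible degenerations requires care, and a bad choice of $\ell$ could force an unpleasantly large $S$ or a residually reducible situation needing the Berger--Klosin--Poor--Shurman--Yuen-type congruence input. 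Everything else — the compatible-system formalism, Chebotarev, the passage from isomorphism of one $\rho_\ell$ to equality of $L$-functions, and the point counts on $X$ — is routine given these two ingredients.
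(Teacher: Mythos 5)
Your proposal matches the paper's strategy essentially step for step: fix $\ell=2$; identify the common residual image in $\GSp_4(\F_2)\simeq S_6$ (here $S_5(b)$, pinned down by point counts on one side and specialization-computed Hecke eigenvalues mod $2$ plus discriminant bounds from local Langlands and a number-field database lookup on the other); then run a Faltings--Serre deformation argument that reduces $\rho_{A,2}\simeq\rho_{f,2}$ to equality of traces at an effectively computed finite set of primes (here $p\le 43$), with the Hecke eigenvalues at those primes obtained by restricting $f$ to a modular curve. The one imprecision worth flagging: the paper does not ``kill'' the relevant $H^1(\ad^0\overline{\rho})$ using large image — that cohomology is far from zero — rather it \emph{enumerates} all obstructing cocycle classes (via class field theory over a degree-$10$ core-free subfield, yielding $4095$ quadratic extensions and $24062$ obstructing pairs $(L,\varphi)$) and for each finds an obstructing Frobenius; the finiteness of the test set $S$ comes from this explicit enumeration plus ramification constraints, not from a vanishing theorem, and making that enumeration tractable is exactly what the core-free-subgroup machinery of section~\ref{sec:corefree} is for.
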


Theorem \ref{thm:mainthm277} is not implied by any of the published or announced results on paramodularity, and its announcement in October 2015 makes it the first established typical case of the paramodular conjecture.  More recently, Berger--Klosin with Poor--Shurman--Yuen \cite{BKPSY} recently established the paramodularity of an abelian surface of conductor $731$ using a congruence with a Siegel Saito--Kurokawa lift.  

Returning to the paramodular conjecture, by work of Brumer--Kramer \cite[Theorem 1.2]{BK15} there is a unique isogeny class of abelian surfaces 
(LMFDB label \href{http://www.lmfdb.org/Genus2Curve/Q/277/a/}{\textsf{\textup{277.a}}}) of conductor $277$.  Therefore, the proof of Conjecture \ref{conj:BrumKram00} for $N=277$ is completed by Theorem \ref{thm:mainthm277}.  (More generally, Brumer--Kramer \cite{BK14} also consider odd semistable conductors at most $1000$.)

The theorem implies, and we prove directly, the equality of polynomials $L_p(A,T)=Q_p(f,T)$ for all primes $p$ arising in the Euler product for the corresponding $L$-series.  These equalities are useful in two ways.  On the one hand, the Euler factors $L_p(A,T)$ can be computed much more efficiently than for $Q_p(f,T)$: without modularity, to compute the eigenvalues of a Siegel modular form $f$ is difficult and sensitive to the manner in which $f$ was constructed, whereas computing $L_p(A,T)$ can be done in average polynomial time \cite{Harvey} and also efficiently in practice \cite{HS}.  On the other hand, the $L$-series $L(A,s)$ is endowed with the good analytic properties of $L(f,s,\spin)$: without (potential) modularity, one knows little about $L(A,s)$ beyond convergence in a right half-plane.

By work of Johnson-Leung--Roberts, there are infinitely many quadratic characters $\chi$ such that the twist $f_\chi$ of the paramodular cusp form by $\chi$ is nonzero \cite[Main Theorem]{JLR-IJNT2014}.  By a local calculation \cite[Theorem 3.1]{JLR-IJNT2017}, we have $Q_p(f_\chi,T) = Q_p(f,\chi(p)T)$ and similarly $L_p(A_\chi,T) = L_p(A,\chi(p)T)$ for good primes $p$.
Consequently, we have $L(A_\chi,s)=L(f_\chi,s,\spin)$ for infinitely many characters $\chi$, and in this way we also establish the paramodularity of infinitely many twists.

We also establish paramodularity for two other isogeny classes in this article of conductors $N=353$ and $N=587$, and our method is general enough to establish paramodularity in a wide variety of cases.

\subsection{The method of Faltings--Serre}

We now briefly discuss the method of proof and a few relevant details.  
Let $\Gal_\Q \colonequals \Gal(\Qbar \,|\, \Q)$ be the absolute Galois group of $\Q$.  To establish paramodularity, we associate $2$-adic Galois representations $\rho_A,\rho_f\colon \Gal_\Q \to \GSp_4(\Q_2^{\textup{al}})$ to $A$ and $f$, and then we prove by an extension of the Faltings--Serre method that these Galois representations are equivalent.  The Galois representation for $A$ arises via its Tate module.  By contrast, the construction of the Galois representation for the Siegel paramodular form---for which the archimedean component of the associated automorphic representation is a holomorphic limit of discrete series---is much deeper: see Theorem \ref{thm:galoisrep} for a precise statement, attribution, and further discussion.  

The first step in carrying out the Faltings--Serre method is to prove equivalence modulo $2$, which can be done using information on $\overline{\rho}_f$ obtained by computing $Q_p(f,T)$ modulo $2$ for a few small primes $p$.
For example, $p=3,5$ are enough for $N=277$ (see Lemma~\ref{lem:residuallyagree}) and in this case the mod $2$ residual Galois representations 
\[ \overline{\rho}_A, \overline{\rho}_f \colon \Gal_{\Q} \to \GSp_4(\F_2) \simeq S_6 \]
have common image
$S_5(b)$ up to conjugation.
(There are two nonconjugate subgroups of $S_6$ isomorphic to $S_5$, interchanged by an outer automorphism of $S_6$: see \eqref{table:subgroups}.)  
  
  The second step is to show that the traces of the two representations agree for an effectively computable set of primes $p$.  For example, to finish the proof of Theorem~\ref{thm:mainthm277} in level $N=277$, it suffices to show equality of traces for primes $p \leq 43$.  
  
We also carry out this strategy to prove paramodularity for two other isogeny classes of abelian surfaces.  For $N=353$, we have the isogeny class with LMFDB label \href{http://www.lmfdb.org/Genus2Curve/Q/353/a/}{\textsf{\textup{353.a}}}; we again represent the paramodular form as a rational function in Gritsenko lifts; and the common mod $2$ image is instead the wreath product $S_3 \wr S_2$ of order $72$.  For $N=587$, we have the class with label \href{http://www.lmfdb.org/Genus2Curve/Q/587/a/}{\textsf{\textup{587.a}}}; instead, we represent the form as a Borcherds product; and in this case the mod $2$ image is the full group $S_6$.

\subsection{Contributions and organization}

Our contributions in this article are threefold.  First, we show how to extend the Faltings--Serre method from $\GL_2$ to a general algebraic group when the residual mod $\ell$ representations are absolutely irreducible.  We then discuss making this practical by consideration of core-free subgroups in a general context, and we hope this will be useful in future investigations.  We then make these extensions explicit for $\GSp_4$ and $\ell=2$.  Whereas for $\GL_2$, Serre's original ``quartic method'' considers extensions whose Galois groups are no larger than $S_4$, for $\GSp_4$ we must contemplate large polycyclic extensions of $S_6$-extensions---accordingly, the Galois theory and class field theory required to make the method explicit and to work in practice are much more involved.  It would be much more difficult (perhaps hopeless) to work with $\GL_4$ instead of $\GSp_4$, so our formulation is crucial for practical implementation.

By other known means, the task of calculating the required traces for $\rho_f$ would be extremely difficult.  Our second contribution in this article is to devise and implement a method of \emph{specialization} of the Siegel modular form to a classical modular form, making this calculation a manageable task.  

Our third contribution is to carry out the required computations.  There are nine absolutely irreducible subgroups of $\GSp_4(\F_2)$.  The three examples we present cover each of the three possibilities for the residual image when it is absolutely irreducible and the level is squarefree (see Lemma \ref{lem:s5s6s3s2}).  Our methods work for any abelian surface whose mod $2$ image is absolutely irreducible, as well as situations for paramodular forms of higher weight.  Our implementations are suitable for further investigations along these lines.

The paper is organized as follows.  In section \ref{sec:faltingsserre}, we explain the extension of the method of Faltings--Serre in a general (theoretical) algorithmic context; we continue in section \ref{sec:corefree} by noting a practical extension of this method using some explicit Galois theory.  We then consider abelian surfaces, paramodular forms, and their associated Galois representations tailored to our setting in section \ref{sec:galoisreps}.  Coming to our intended application, we provide in section \ref{sec:GSp4} the group theory and Galois theory needed for the Faltings--Serre method for $\GSp_4(\Z_2)$.  In section \ref{sec:computehecke}, we explain a method to compute Hecke eigenvalues of Siegel paramodular forms using restriction to a modular curve.  Finally, in section \ref{sec:verifypara}, we combine these to complete our task and verify paramodularity.

\subsection{Acknowledgements}

The authors would like to thank several people for helpful conversations: Frank Calegari, Jennifer John\-son-Leung, Kenneth Kra\-mer, Chung Pang Mok, David P.\ Roberts (in particular for Proposition \ref{prop:discbound}), Drew Sutherland, and Eric Urban (in particular for help with showing that the representation is symplectic in Theorem \ref{thm:galoisrep}).  We also thank Fordham University's Academic Computing Environment for the use of its servers.  Thanks also to the anonymous referees for their feedback.  Pacetti was partially supported by PIP 2014-2016 11220130100073 and  Voight was supported by an NSF CAREER Award (DMS-1151047) and a Simons Collaboration Grant (550029).

This large collaborative project was made possible by the generous support of several host institutes, to which we express our thanks: the Institute for Computational and Experimental Research in Mathematics (ICERM), the International Centre for Theoretical Physics (ICTP), and the Hausdorff Institute of Mathematics (HIM).

\section{A general Faltings--Serre method}  \label{sec:faltingsserre}

In this section, from the point of view of general algorithmic theory, we formulate the Faltings--Serre method to show that two $\ell$-adic Galois representations are equivalent, under the hypothesis that the residual representations are absolutely irreducible.  A practical method for the group $\GSp_4(\Z_2)$ is given in section \ref{sec:GSp4}.  For further reading on the Faltings--Serre method, see the 
original criterion given by Serre \cite{Se1} for elliptic curves over $\Q$, an extension for residually reducible representations by Livn\'e \cite[\S 4]{Livne}, the general overview for $\GL_2$ over number fields by Dieulefait--Guerberoff--Pacetti \cite[\S 4]{DGP}, and the description for $\GL_n$ by Sch\"utt \cite[\S 5]{Schutt}.  For an algorithmic approach in the pro-$p$ setting, see Greni\'e \cite{Grenie}.

\subsection{Trace computable representations}

Let $F$ be a number field with ring of integers $\Z_F$.  Let $\Fbar$ be an algebraic closure of $F$; we take all algebraic extensions of $F$ inside $\Fbar$.  Let $\Gal_F \colonequals \Gal(\Fbar\,|\,F)$ be the absolute Galois group of $F$.  Let $S$ be a finite set of places of $F$, let $\Gal_{F,S}$ be the Galois group of the maximal subextension of $\Fbar \supseteq F$ unramified away from $S$.  By a \defi{prime} of $F$ we mean a nonzero prime ideal $\frakp \subset \Z_F$, or equivalently, a finite place of $F$.  

Let $\redG \subseteq \GL_n$ be an embedded algebraic group over $\Q$. 
Let $\ell$ be a prime of good reduction for the inclusion $\redG \subseteq \GL_n$.  A \defi{representation} $\Gal_{F,S} \to \redG(\Z_\ell)$ is a continuous homomorphism.

\begin{defn} \label{defn:rho1rho2}
Let $\rho_1,\rho_2\colon \Gal_{F,S} \to \redG(\Z_\ell)$ be two representations.  We say $\rho_1$ and $\rho_2$ are ($\GL_n$-)\defi{equivalent}, and we write $\rho_1 \simeq \rho_2$, if there exists $g \in \GL_n(\Z_\ell)$ such that 
\[ \rho_1(\sigma) = g\rho_2(\sigma)g^{-1}, \quad \text{for all $\sigma \in \Gal_{F,S}$.} \]  
\end{defn}

\begin{defn}
A representation $\rho \colon \Gal_{F,S} \to \redG(\Z_\ell)$ is \defi{trace computable} there exists a deterministic algorithm to compute $\tr(\Frob_\frakp)$ for $\frakp \not \in S$, where $\Frob_\frakp$ denotes the conjugacy class of the Frobenius automorphism at $\frakp$.
\end{defn}

In particular, if $\rho$ is trace computable then the values $\tr \rho(\Frob_\frakp)$ belong to a computable subring of $\Z_\ell$.  For precise definitions and a thorough survey of the subject of computable rings, see Stoltenberg-Hansen--Tucker \cite{SHT}.  See Cohen \cite{Cohen} for background on algorithmic number theory.

\begin{rmk}
Galois representations arising in arithmetic geometry are often trace computable.  For example, by counting points over finite fields, we may access the trace of Frobenius acting on Galois representations arising from the \'etale cohomology of a nice variety: then the trace takes values in $\Z \subseteq \Z_\ell$ (independent of $\ell$).  Similarly, algorithms to compute modular forms give as output Hecke eigenvalues, which can then be interpreted in terms of the trace of Frobenius on the associated Galois representation.  
\end{rmk}

Looking only at the trace of a representation is justified in certain cases by the following theorem, a cousin to the Brauer--Nesbitt theorem.  For $r \geq 1$, write 
\[ \rho \bmod{\ell^r} \colon \Gal_{F,S} \to \redG(\Z/\ell^r \Z) \]
for the reduction of $\rho$ modulo $\ell^r$,
and as a shorthand write
\[
\overline{\rho}\colon\Gal_{F,S}\to\redG(\F_\ell)
\]
for the residual representation
$\overline{\rho}=\rho\bmod\ell$.
Given two representations $\rho_1,\rho_2:\Gal_{F,S} \to \redG(\Z_\ell)$, we write $\rho_1 \simeq \rho_2 \pmod{\ell^r}$ to mean that $(\rho_1 \bmod \ell^r) \simeq (\rho_2 \bmod \ell^r)$ are equivalent as in Definition \ref{defn:rho1rho2} but over $\Z/\ell^r \Z$; we write $\rho_1 \equiv \rho_2 \pmod{\ell^r}$ to mean that $(\rho_1 \bmod \ell^r) = (\rho_2 \bmod \ell^r)$; and we write $\tr \rho_1 \equiv \tr \rho_2 \pmod{\ell^r}$ if $\tr \rho_1(\sigma) \equiv \tr \rho_2(\sigma) \pmod{\ell^r}$ for all $\sigma \in \Gal_{F,S}$.  Finally, we say that $\overline{\rho}$ is \defi{absolutely irreducible} if the representation $\Gal_{F,S} \to \redG(\F_\ell) \hookrightarrow \GL_n(\F_\ell)$ is absolutely irreducible.

\begin{thm}[Carayol]  \label{thm:Brauer-Nesbit}  
  Let $\rho_1,\rho_2:\Gal_{F,S} \to \redG(\Z_\ell)$ be two representations such that
  $\overline{\rho}_1$ is absolutely irreducible and let $r \geq 1$.  Then
  $\rho_1 \simeq \rho_2  \bmod{\ell^r}$ if and only if
  $\tr \rho_1 \equiv \tr \rho_2 \psmod{\ell^r}$.
\end{thm}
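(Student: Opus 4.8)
The plan is to forget the ambient group $\redG$ (the asserted equivalence only concerns the $\GL_n$-structure) and reduce everything to a purely algebraic statement over the finite local ring $R\colonequals \Z/\ell^r\Z$: that two group homomorphisms $\rho_i^{(r)}\colonequals \rho_i\bmod\ell^r\colon \Gal_{F,S}\to\GL_n(R)$ with the same trace function are $\GL_n(R)$-conjugate, provided $\overline{\rho}_1$ is absolutely irreducible. The backward implication is immediate since trace is a class function, so the content is the forward one, which I would obtain by running the classical Burnside--Skolem--Noether argument over $R$ instead of over a field.

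First I would install two linear-algebra facts over $M\colonequals\Mat_n(R)$ in place of their field analogues. The trace pairing $\langle A,B\rangle\colonequals\tr(AB)$ is a perfect $R$-bilinear form on $M$: in the basis of elementary matrices its Gram matrix is a permutation matrix, hence unimodular over any commutative ring. And since $M$ is a \emph{finite} free $R$-module of rank $n^2$, any $n^2$ elements of $M$ that are $R$-linearly independent (or that $R$-span $M$) automatically form an $R$-basis, because an injective (resp.\ surjective) self-map of a finite set is bijective. Absolute irreducibility of $\overline{\rho}_1$ enters through Burnside's theorem: the $\F_\ell$-span of $\overline{\rho}_1(\Gal_{F,S})$ is all of $\Mat_n(\F_\ell)$. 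So I may pick $\sigma_1,\dots,\sigma_{n^2}\in\Gal_{F,S}$ with $\overline{\rho}_1(\sigma_i)$ an $\F_\ell$-basis of $\Mat_n(\F_\ell)$; then the $\rho_1^{(r)}(\sigma_i)$ generate $M$ over $R$ by Nakayama ($\ell$ being nilpotent in $R$), hence form an $R$-basis. The matrix $B\colonequals\bigl(\tr\rho_1^{(r)}(\sigma_i\sigma_j)\bigr)_{i,j}$ then lies in $\GL_{n^2}(R)$, being the Gram matrix of a basis for a perfect form. Because $\tr\rho_1\equiv\tr\rho_2\pmod{\ell^r}$, the same matrix $B$ is the Gram matrix of the $\rho_2^{(r)}(\sigma_i)$, so these are $R$-linearly independent and hence likewise an $R$-basis of $M$.

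Next I would define the $R$-linear bijection $\varphi\colon M\to M$ by $\varphi\bigl(\rho_1^{(r)}(\sigma_i)\bigr)\colonequals\rho_2^{(r)}(\sigma_i)$ and show it is a unital $R$-algebra automorphism carrying $\rho_1^{(r)}(\sigma)$ to $\rho_2^{(r)}(\sigma)$ for every $\sigma$. The crux: writing $\rho_1^{(r)}(\sigma)=\sum_i c_i\rho_1^{(r)}(\sigma_i)$ and pairing with each $\rho_1^{(r)}(\sigma_j)$ pins down the $c_i\in R$ via the invertible matrix $B$; pairing $\rho_2^{(r)}(\sigma)-\sum_i c_i\rho_2^{(r)}(\sigma_i)$ with each $\rho_2^{(r)}(\sigma_j)$ gives $0$ by the trace hypothesis (everything being expressed through $\tr(\rho(\sigma)\rho(\tau))=\tr\rho(\sigma\tau)$), so that difference vanishes by perfectness of the trace pairing. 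Hence $\varphi(\rho_1^{(r)}(\sigma))=\rho_2^{(r)}(\sigma)$; additivity of $\varphi$ together with $\rho_i^{(r)}(\sigma\tau)=\rho_i^{(r)}(\sigma)\rho_i^{(r)}(\tau)$ and the fact that the $\rho_1^{(r)}(\sigma)$ span $M$ upgrade $\varphi$ to a unital algebra automorphism of $\Mat_n(R)$. Finally, every $R$-algebra automorphism of $\Mat_n(R)$ is inner (Skolem--Noether over the local ring $R$), so $\varphi=(A\mapsto uAu^{-1})$ for some $u\in\GL_n(R)$, whence $\rho_2\equiv u\rho_1u^{-1}\pmod{\ell^r}$, i.e.\ $\rho_1\simeq\rho_2\pmod{\ell^r}$.

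The main obstacle is not any single deep input but the bookkeeping of what survives base change from $\F_\ell$ to the non-reduced ring $R=\Z/\ell^r\Z$: that the trace form on $\Mat_n(R)$ stays perfect (so traces genuinely pin down elements), that the lifts $\rho_1^{(r)}(\sigma_i)$ remain a basis (Nakayama plus finiteness), and that Skolem--Noether holds over $R$. A cohomological alternative would induct on $r$: after conjugating so that $\rho_1\equiv\rho_2\pmod{\ell^{r-1}}$, the obstruction to equivalence mod $\ell^r$ is a class in $\coho^1(\Gal_{F,S},\ad\overline{\rho}_1)$, and the trace identity forces its image under ``pairing against $\overline{\rho}_1$'' to vanish, which by absolute irreducibility kills the class. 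I would nonetheless prefer the direct argument, since the cohomological route is messier precisely when $\ell\mid n$ (as for $\ell=2$, $n=4$ in our application), where one cannot cleanly split scalars off $\ad\overline{\rho}_1$.
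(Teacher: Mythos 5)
Your proof is correct. It takes a genuinely different route from the one the paper leans on. The paper cites Carayol, whose argument (reconstructed in the paper's appendix) is an induction on the power of the maximal ideal: after conjugating so that $\rho_1\equiv\rho_2\pmod{\gm^d}$, the difference $\delta=\rho_1-\rho_2$ descends (using Burnside) to a derivation $\Mat_n(k)\to\Mat_n(\gm^d/\gm^{d+1})$, every such derivation is inner, and the inner element produces a conjugation lifting the equivalence one step further. Yours is a one-shot argument over the fixed finite ring $R=\Z/\ell^r\Z$: perfectness of the trace pairing and Nakayama lift a Burnside basis of $\Mat_n(\F_\ell)$ to an $R$-basis $\{\rho_1^{(r)}(\sigma_i)\}$ of $\Mat_n(R)$, the trace hypothesis forces $\{\rho_2^{(r)}(\sigma_i)\}$ to be another basis with the same Gram matrix, and the induced map extends (again by the trace hypothesis and perfectness) to an $R$-algebra automorphism of $\Mat_n(R)$, which is inner by Skolem--Noether over the Artinian local ring $R$. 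Both proofs rest on the same essential input (Burnside, i.e.\ that absolute irreducibility makes the image span $\Mat_n$), but differently packaged: Carayol's induction has the same cocycle/coboundary shape as the Faltings--Serre deformation step carried out in Section \ref{sec:FSdefo}, so it fits the paper's framework more organically and is the version that generalizes to arbitrary complete local rings; your direct argument is shorter, avoids derivations and cohomology entirely (including the issue you flag about $\ell\mid n$), but trades the inner-derivation lemma for Skolem--Noether over a non-field base, which you would need to justify (e.g.\ via the Rosenberg--Zelinsky sequence and $\Pic(R)=0$ for $R$ local). One small point of hygiene: when you argue that $n^2$ linearly independent elements of $\Mat_n(R)$ form a basis, the map $R^{n^2}\to\Mat_n(R)$ is not literally a self-map, but it is an injection between finite sets of equal cardinality, which is the fact you actually use.
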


\begin{proof}
See Carayol \cite[Th\'eor\`eme 1]{Carayol}.
\end{proof}

We now state the main result of this section.  We say that a prime $\frakp$ of $F$ is a \defi{witness} to the fact that $\rho_1 \not\simeq \rho_2$ if $\tr \rho_1(\Frob_\frakp) \neq \tr \rho_2(\Frob_\frakp)$.

\begin{thm} \label{thm:state}
There is a deterministic algorithm that takes as input
\begin{equation} \label{eqn:inputdata}
\begin{minipage}{0.8\textwidth}
\begin{center} 
an algebraic group $\redG$ over $\Q$, a number field $F$, \\ 
a finite set $S$ of primes of $F$, a prime $\ell$, \\
and $\rho_1,\rho_2\colon \Gal_{F,S} \to \redG(\Z_\ell)$ trace computable representations \\
 with $\overline{\rho}_1,\overline{\rho}_2$ absolutely irreducible,
\end{center}
\end{minipage}
\end{equation}
and gives as output
\begin{center}
\textup{\texttt{true}} if $\rho_1 \simeq \rho_2$; or \\
\textup{\texttt{false}} and a witness prime $\frakp \not \in S$ if $\rho_1 \not\simeq \rho_2$.
\end{center}
\end{thm}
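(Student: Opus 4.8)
The plan is to combine Carayol's theorem (Theorem \ref{thm:Brauer-Nesbit}) with an effective bound on the order of the image and an effective version of the Chebotarev density theorem. First I would observe that it suffices to decide, for each $r \geq 1$, whether $\rho_1 \simeq \rho_2 \pmod{\ell^r}$; by Theorem \ref{thm:Brauer-Nesbit} this is equivalent to $\tr \rho_1 \equiv \tr \rho_2 \pmod{\ell^r}$. The key finiteness input is that, since $\overline\rho_1$ is absolutely irreducible, its image $\overline{G} \colonequals \overline\rho_1(\Gal_{F,S})$ is a finite subgroup of $\GL_n(\F_\ell)$ whose centralizer is scalar; a standard argument (already implicit in the proof of Theorem \ref{thm:Brauer-Nesbit}) then bounds the kernel of reduction on the image of $\rho_1$, so there is an explicit $r_0 = r_0(n,\ell,\overline{G})$ with the property that if $\rho_1 \simeq \rho_2 \pmod{\ell^{r_0}}$ then $\rho_1 \simeq \rho_2$. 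Concretely one uses that the image of $\rho_1$ in $\redG(\Z_\ell)$ is a compact $\ell$-adic Lie group, pro-$\ell$ above level $\ell$ (or $\ell^2$ for $\ell=2$), of dimension at most $\dim \redG$, so the successive quotients $\Gamma_r/\Gamma_{r+1}$ of the congruence filtration stabilize and a deformation-theoretic (or direct matrix) argument shows that an isomorphism mod $\ell^{r_0}$ lifts; the point is only that $r_0$ is computable from the input.

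Next I would make the verification of $\tr \rho_1 \equiv \tr \rho_2 \pmod{\ell^r}$ effective. The function $\sigma \mapsto \tr\rho_1(\sigma) - \tr\rho_2(\sigma) \bmod \ell^r$ is a continuous, hence locally constant, function on $\Gal_{F,S}$; it factors through a finite quotient $\Gal(L/F)$, where $L$ is the compositum of the fixed fields cut out by $\rho_1 \bmod \ell^r$ and $\rho_2 \bmod \ell^r$. This extension $L/F$ is unramified outside $S \cup \{\ell\}$ and of degree bounded in terms of $n$, $\ell$, $r$ (hence in terms of the input), and—crucially—it can be \emph{constructed}, since the $\rho_i$ are trace computable: one enumerates primes $\frakp \notin S$, computes $\tr\rho_i(\Frob_\frakp) \bmod \ell^r$ and the characteristic polynomial data, and uses class field theory / Kummer theory over the ground field together with known bounds (as in the $\GSp_4$ case treated in section \ref{sec:GSp4}) to pin down $L$ once enough Frobenius data has been collected; alternatively one may sidestep explicit construction of $L$ and instead appeal directly to an effective Chebotarev bound. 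Either way, by effective Chebotarev (say under GRH, or unconditionally with a worse but still explicit bound) there is a computable constant $B = B(F, S, \ell, r, n)$ such that every conjugacy class of $\Gal(L/F)$ contains some $\Frob_\frakp$ with $\Nm(\frakp) \leq B$; hence checking the trace congruence on all primes $\frakp \notin S$ with $\Nm(\frakp) \leq B$ suffices to decide it for all $\sigma$.

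Assembling these: the algorithm computes $r_0$ from the input, then for $r = 1, \dots, r_0$ computes the bound $B_r$ and checks whether $\tr\rho_1(\Frob_\frakp) \equiv \tr\rho_2(\Frob_\frakp) \pmod{\ell^r}$ for all primes $\frakp \notin S$ of norm at most $B_r$ (this is a finite, deterministic computation by trace computability). If some check fails at some prime $\frakp$, output \texttt{false} together with that witness $\frakp$. If all checks pass, then $\tr\rho_1 \equiv \tr\rho_2 \pmod{\ell^{r_0}}$, so $\rho_1 \simeq \rho_2 \pmod{\ell^{r_0}}$ by Theorem \ref{thm:Brauer-Nesbit}, so $\rho_1 \simeq \rho_2$ by the choice of $r_0$; output \texttt{true}. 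Termination and correctness follow from the two effective bounds. The main obstacle is producing the bound $r_0$: one must show that congruence modulo a computable power of $\ell$ forces genuine equivalence, which requires controlling the pro-$\ell$ part of the image of $\rho_1$ (equivalently, the relevant deformation ring) in a way that is uniform and explicit in the input data; the absolute irreducibility of $\overline\rho_1$ is exactly what rigidifies the lifting problem and makes such a bound exist. The effective Chebotarev step is standard; the construction of $L$, while notationally heavy, is routine class field theory and is carried out concretely for $\GSp_4(\Z_2)$ in section \ref{sec:GSp4}.
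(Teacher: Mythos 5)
Your proof hinges on the claim that there is a computable $r_0 = r_0(n, \ell, \overline{G})$ such that $\rho_1 \simeq \rho_2 \pmod{\ell^{r_0}}$ forces $\rho_1 \simeq \rho_2$. No such bound exists, even with absolute irreducibility. Suppose $\rho_1 \equiv \rho_2 \pmod{\ell^{r}}$; then as in \eqref{eqn:rho1mu} the discrepancy defines a cocycle $\mu \in \cocyc^1(F, \ad\rhobar;\Lie(\redG)(\F_\ell))$, and $\rho_1 \simeq \rho_2 \pmod{\ell^{r+1}}$ if and only if $\mu$ is a coboundary (Lemma \ref{lem:iscocyc}). If $\coho^1(F, \ad\rhobar)$ is nonzero---which is the generic situation and cannot be ruled out from the input data---then for \emph{every} $r$ there exist lifts of $\overline\rho$ that agree with $\rho_1$ modulo $\ell^r$ but diverge at level $r+1$. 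Concretely, already for $n=1$ (where absolute irreducibility is automatic) one can take $\rho_1$ trivial and $\rho_2$ a character with values in $1 + \ell^{r}\Z_\ell$ cut out by the $\Z_\ell$-cyclotomic tower; this works for any $r$. Your appeal to the congruence filtration of the image stabilizing controls the image of a single $\rho_1$, not the space of $\rho_2$ that could shadow it; and "the absolute irreducibility of $\overline\rho_1$... rigidifies the lifting problem" is precisely what is \emph{not} true (rigidity would require vanishing of the relevant $\coho^1$, not just Schur's lemma for the centralizer).

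The point of the paper's argument is to sidestep any such bound. Algorithm \ref{alg:detrep} enumerates, once and for all, the finitely many obstructing pairs $(L,\varphi)$ extending $(K,\overline\rho)$, and for each finds a prime $\frakp$ with $\utr\varphi(\Frob_\frakp) \neq 0$. The resulting finite set $\calP$ of primes is a universal witness set: if $\rho_1 \not\simeq \rho_2$, let $r$ be the first level of disagreement; then the induced $\mu$ is not a coboundary, so by Corollary \ref{cor:yupmuFS} the pair $(L_\mu,\varphi_\mu)$ is obstructing, hence (up to conjugacy) appears in the list, and the corresponding prime in $\calP$ detects the disagreement via \eqref{eqn:uppertrace}. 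This is how termination is achieved without bounding $r$---and it is the content you need to supply, not an $r_0$. Your effective-Chebotarev step is fine as a device for finding a Frobenius in a prescribed conjugacy class of a \emph{fixed} finite extension, which is exactly how the paper uses it in Step 4; it cannot by itself supply the missing finiteness in $r$.
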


The algorithm does not operate on the representations $\rho_1,\rho_2$ themselves, only their traces.  The proof of Theorem \ref{thm:state} will occupy us throughout this section.

\subsection{Testing equivalence of residual representations}

We first prove a variant of our theorem for the residual representations.
For a finite extension $K_0 \supseteq F$ of fields
with $[K_0:F]=n$ and
with Galois closure $K$,
we write $\Gal(K_0\,|\, F) \leq S_n$
for the Galois group $\Gal(K\,|\, F)$
as a permutation group on the roots of a minimal polynomial
of a primitive element for $K_0$.

\begin{lem} \label{lem:enumerateallfields}
There exists a deterministic algorithm that takes as input 
\begin{center}
a number field $F$, \\
a finite set $S$ of places of $F$, \\
and a transitive group $G \leq S_n$,
\end{center}
and gives as output 
\begin{center}
all extensions $K_0 \supseteq F$ (up to isomorphism) of degree $n$ \\ 
unramified at all places $v \not\in S$ \\
such that $\Gal(K_0 \,|\, F) \simeq G$ as permutation groups.
\end{center}
Moreover, every Galois extension $K\supseteq F$ unramified outside $S$
such that $\Gal(K\,|\,F)\simeq G$ as groups
appears as the Galois closure of at least one such $K_0 \supseteq F$.
\end{lem}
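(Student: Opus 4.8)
The plan is to reduce the problem to finite computations using two classical finiteness results together with effective class field theory and an enumeration of number fields by bounded discriminant. First I would observe that any extension $K_0 \supseteq F$ of degree $n$ that is unramified outside $S$ has its Galois closure $K$ unramified outside $S$ as well, and that the discriminant $\disc(K_0\,|\,F)$ is supported only on the primes in $S$. By the conductor--discriminant formula applied to $K\supseteq F$, or more directly by a bound on the exponents of primes appearing in $\disc(K_0\,|\,F)$ in terms of $n$ and the ramification (using that wild ramification for a prime $\frakp$ above $p$ contributes a bounded exponent depending only on $v_{\frakp}(p)$ and $n$), one gets an effective upper bound on $|\Nm_{F/\Q}\disc(K_0\,|\,F)|$, hence on $|\disc(K_0)|$ over $\Q$. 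This is the key point where Hermite--Minkowski enters: there are only finitely many number fields of bounded degree $[K_0:\Q] = n[F:\Q]$ and bounded absolute discriminant, and these can be enumerated effectively (for instance via the algorithms of Pohst--Zassenhaus or, in practice, using Hunter-type searches as implemented in \Magma\ or \textsc{Pari/GP}).

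The algorithm then proceeds as follows. Step one: from $F$, $S$, and $n$, compute an effective bound $B$ on $|\disc(K_0)|$ as above. Step two: enumerate all number fields $L$ with $[L:\Q] = n[F:\Q]$ and $|\disc(L)| \le B$; this is a finite, effectively computable list. Step three: for each such $L$, test whether $L$ contains a subfield isomorphic to $F$ (by factoring a defining polynomial of $F$ over $L$, or by an embedding search), and for each such embedding $\iota\colon F \hookrightarrow L$, record the pair $K_0 = (L,\iota)$ as a candidate degree-$n$ extension of $F$. Step four: discard any candidate that is ramified at some place $v \notin S$ (checkable by factoring the relative discriminant ideal $\disc(K_0\,|\,F)$ and inspecting its prime support). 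Step five: for each surviving $K_0$, compute its Galois closure $K$ over $F$ and the permutation group $\Gal(K_0\,|\,F) \le S_n$ acting on the $n$ embeddings of $K_0$ into $K$ over $F$ (equivalently, on the roots of a minimal polynomial of a primitive element), and retain exactly those for which this permutation group is $G$-conjugate in $S_n$ to the input group. Finally, deduplicate up to $F$-isomorphism. This produces the required list, and by construction every Galois $K\supseteq F$ unramified outside $S$ with $\Gal(K\,|\,F)\simeq G$ arises as the Galois closure of at least one retained $K_0$, since $G$ being transitive means $K$ is the splitting field of the minimal polynomial of a primitive element of any degree-$n$ subextension fixed by a point stabilizer, and that subextension is itself unramified outside $S$ with the correct permutation group.

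The main obstacle is making the discriminant bound in step one genuinely effective and not merely finite: one must control the contribution of wild ramification at the primes $\frakp \in S$ dividing $n$. The clean way to do this is to bound, for each $\frakp \in S$ above a rational prime $p$, the exponent of $\frakp$ in the relative different $\mathfrak{d}_{K_0/F}$ by the standard estimate $v_{\frakp}(\mathfrak{d}_{K_0/F}) \le e_{\frakp} - 1 + v_{\frakp}(e_{\frakp})$ for each prime of $K_0$ over $\frakp$, where $e_{\frakp} \le n$; summing over the at most $n$ primes above $\frakp$ and over $\frakp \in S$, and then taking norms to $\Q$, yields an explicit $B$ depending only on $F$, $S$, and $n$. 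An alternative, sometimes sharper in practice, is to go through the Galois closure and use the conductor--discriminant formula together with bounds on the ramification filtration of $\Gal(K\,|\,F) \le S_n$; but for the purposes of establishing the theorem the crude bound suffices. Everything else --- factoring ideals, enumerating fields of bounded discriminant, computing Galois groups and closures, testing subfield containment --- is standard and available as deterministic algorithms in the references already cited (Cohen \cite{Cohen}), so once $B$ is in hand the rest of the proof is bookkeeping.
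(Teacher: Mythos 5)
Your proposal is correct and takes essentially the same route as the paper's proof: bound the relative discriminant of $K_0/F$ using the constraint that ramification is supported on $S$ with bounded exponents (the paper cites Krasner's lemma; you use the equivalent explicit bound on the different), invoke Hermite finiteness, enumerate by bounded-discriminant search, then filter by Galois group and ramification, and finally observe for the second assertion that any abstractly $G$-isomorphic Galois $K/F$ is the closure of $K_0 = K^{H}$ with $H$ a point stabilizer, which is core-free since $G$ is transitive. The only cosmetic difference is that you enumerate absolute fields $L/\Q$ of bounded degree $n[F:\Q]$ and discriminant and then test for an embedded copy of $F$, whereas the paper runs a relative Hunter search over $F$ directly (Cohen's algorithm); both are standard and interchangeable here.
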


\begin{proof}
The extensions $K_0$ have degree $n$ and are unramified away from $S$, so they have effectively bounded discriminant by Krasner's lemma.  Therefore, there are finitely many such fields up to isomorphism, by a classical theorem of Hermite.  The enumeration can be accomplished algorithmically by a \emph{Hunter search}: see Cohen \cite[\S 9.3]{Co2}.  The computation and verification of Galois groups can also be accomplished effectively.  

The second statement follows from basic Galois theory.
\end{proof}

 \begin{remark}
 For theoretical purposes, it is enough to consider $G \hookrightarrow S_n$ in
 its regular representation ($n=\#G$), for which the algorithm
 yields Galois extensions $K=K_0\supseteq F$.
 For practical purposes, it is crucial to work
 with small permutation representations.
 \end{remark}

\begin{alg} \label{alg:detresidual}
The following algorithm takes as input the data \eqref{eqn:inputdata} and gives as output 
\begin{center}
\textup{\texttt{true}} if $\overline{\rho}_1 \simeq \overline{\rho}_2$; or \\
\textup{\texttt{false}} and a witness prime $\frakp \not\in S$ if $\overline{\rho}_1 \not\simeq \overline{\rho}_2$.
\end{center}

\begin{enumalg}
\item Using the algorithm of Lemma \textup{\ref{lem:enumerateallfields}}, enumerate all Galois extensions $K \supseteq F$ up to isomorphism that are unramified away from $S$ and such that $\Gal(K \,|\, F)$ is isomorphic to a subgroup of $\redG(\F_\ell)$.
\item For each of these finitely many fields, enumerate all injective group homomorphisms $\theta\colon \Gal(K \,|\, F) \hookrightarrow \redG(\F_\ell)$
up to conjugation by $\GL_n(\F_\ell)$. 
\item Looping over primes $\frakp \not \in S$ of $F$, rule out pairs $(K,\theta)$ such that 
\[ \tr \rho_1(\Frob_\frakp) \not\equiv \tr \theta(\Frob_\frakp) \psmod{\ell} \] 
for some $\frakp$ until only one possibility $(K_1,\theta_1)$ remains.
\item Let $\calP$ be the set of primes used in Step 3.
If
\[ \tr \rho_2(\Frob_\frakp) \equiv \tr \theta_1(\Frob_\frakp) \psmod{\ell} \] 
for all $\frakp\in\calP$, return \texttt{true}; otherwise, return \texttt{false} and a prime $\frakp \in \calP$ such that $\tr \rho_2(\Frob_\frakp) \not\equiv \tr \theta_1(\Frob_\frakp)$.  
\end{enumalg}
\end{alg}

\begin{proof}[Proof of correctness]
Let $K_1$ be the fixed field under $\ker \overline{\rho}_1$; then $K_1$ is unramified away from $S$, and we have an injective homomorphism $\overline{\rho}_1\colon \Gal(K_1\,|\, F) \hookrightarrow \redG(\F_\ell)$.  Thus $(K_1,\overline{\rho}_1)$ is among the finite list of pairs $(K,\theta)$ computed in Step 2.  

Combining Theorem \ref{thm:Brauer-Nesbit} (for $r=1$) and the Chebotarev density theorem, we can effectively determine if $\overline{\rho}_1 \not \simeq \theta$ by finding a prime $\frakp$ such that $\tr \rho_1(\Frob_\frakp) \not\equiv \tr \theta(\Frob_\frakp) \psmod{\ell}$.  So by looping over the primes $\frakp \not \in S$ of $F$ in Step 3, we will eventually rule out all of the finitely many candidates except one $(K_1',\theta_1')$ and, in the style of Sherlock Holmes, we must have $K_1=K'_1$ and $\overline{\rho}_1 \simeq \theta_1$. 

For the same reason,
if $\tr\rho_2(\Frob_\frakp)\equiv\tr\theta_1(\Frob_\frakp) \pmod{\ell}$ for all $\frakp\in\calP$ we must have $\overline{\rho}_2\simeq\theta_1\simeq\overline{\rho}_1$.
Otherwise, we find a witness prime $\frakp\in\calP$.
\end{proof}

\begin{remark}
In practice, we may also use the characteristic polynomial of
$\rhobar_i(\Frob_\frakp)$ when it is computable, since it gives more information about the residual image and thereby
limits the possible subgroups of $\redG(\F_\ell)$ we need to consider in Step 1.
This allows for a smaller list of pairs $(K,\theta)$ and
a smaller list of primes: 
see Lemma~\ref{lem:residuallyagree} for an example.
\end{remark}

\subsection{Faltings--Serre and deformation} \label{sec:FSdefo}

With the residual representations identified, we now explain the key idea of the Faltings--Serre method: we exhibit another representation that measures the failure of two representations to be equivalent.  This construction is quite natural when viewed in the language of deformation theory: see Gouv\^ea \cite[Lecture 4]{Gouvea} for background.  

For the remainder of this section, let $\rho_1,\rho_2 \colon \Gal_{F,S} \to \redG(\Z_\ell)$ be representations such that $\rho_1 \simeq \rho_2 \pmod{\ell^r}$ for some $r \geq 1$.  Conjugating $\rho_2$, we may assume $\rho_1 \equiv \rho_2 \pmod{\ell^r}$, and we write $\overline{\rho} \colonequals \overline{\rho}_1=\overline{\rho}_2$ for the common residual representation modulo $\ell$.  We suppose throughout that $\overline{\rho}$ is absolutely irreducible.

Let $\Lie(\redG) \leq \M_n$ be the Lie algebra of $\redG$ over $\Q$ as a commutative algebraic group.  Attached to $\overline{\rho}$ is the \defi{adjoint residual representation} 
\begin{equation}
\begin{aligned}
    \ad \overline{\rho} \colon \Gal_{F,S} &\to \Aut_{\F_\ell}(\M_n(\F_\ell)) \\
    \sigma&\mapsto \sigma_{\ad}
\end{aligned}
\end{equation}
defined by
$\sigma_{\ad}(a) \colonequals\overline{\rho}(\sigma)a\overline{\rho}(\sigma)^{-1}$ for $a \in \M_n(\F_\ell)$.  The adjoint residual representation $\ad \overline{\rho}$ also restricts to take values in $\Aut_{\F_\ell}(\Lie(\redG)(\F_\ell))$, but we will not need to introduce new notation for this restriction.

Because we consider representations with values in $\redG$ up to equivalence in $\GL_n$, it is natural that our deformations will take values in $\Lie(\redG)$ up to equivalence in $\M_n$.  With this in mind, 
we define the group of \defi{cocycles}
\begin{equation}
\begin{aligned}
&\cocyc^1(F, \ad\rhobar; \Lie(\redG)(\F_\ell)) \colonequals  \\
&\qquad \bigl\{
  (\mu\colon \Gal_{F,S}\to \Lie(\redG)(\F_\ell)) : 
  \mu(\sigma\tau) = \mu(\sigma)
+ \sigma_{\ad}(\mu(\tau))
\text{ for all $\sigma,\tau \in \Gal_{F,S}$}
\bigr\} 
\end{aligned}
\end{equation}
and the subgroup of \defi{coboundaries}
\begin{equation}
\begin{aligned}
&\cobound^1(F, \ad\rhobar; \M_n(\F_\ell)) \colonequals \\
&\qquad \bigl\{
    \mu \in \cocyc^1(F, \ad\rhobar;\Lie(\redG)(\F_\ell))
:\text{there exists $a \in \M_n(\F_\ell)$ such that} \\
  &\qquad\qquad\qquad\qquad\qquad\qquad\qquad\quad \text{$\mu(\sigma) = a - \sigma_{\ad}(a)$ for all $\sigma \in \Gal_{F,S}$}\bigr\}.
\end{aligned}
\end{equation}

From the exact sequence
\begin{equation} 
1 \to 1+\ell^r \Lie(\redG)(\F_\ell) \to \redG(\Z/\ell^{r+1} \Z) \to \redG(\Z/\ell^r\Z) \to 1,
\end{equation}
we conclude that for all $\sigma\in\Gal_{F,S}$
there exists $\mu(\sigma) \in \Lie(\redG)(\F_\ell)$ such that
\begin{equation} \label{eqn:rho1mu}
\rho_1(\sigma) \equiv (1+\ell^r \mu(\sigma))\rho_2(\sigma) \pmod{\ell^{r+1}}\,.
\end{equation}

\begin{lem} \label{lem:iscocyc}
The following statements hold.
\begin{enumalph}
\item    The map $\sigma \mapsto \mu(\sigma)$ defined by \eqref{eqn:rho1mu} is a cocycle $\mu \in \cocyc^1(F, \ad\rhobar;\Lie(\redG)(\F_\ell))$.
\item We have
$\rho_1 \simeq \rho_2 \pmod{\ell^{r+1}}$
if and only if $\mu\in\cobound^1(F, \ad\rhobar;\M_n(\F_\ell))$.
\end{enumalph}
\end{lem}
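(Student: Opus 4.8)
The plan is to verify the cocycle condition in (a) by a direct substitution into \eqref{eqn:rho1mu}, then to deduce (b) by tracking what happens when we conjugate $\rho_2$ by an element congruent to the identity modulo $\ell^r$.

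For (a), fix $\sigma,\tau \in \Gal_{F,S}$ and expand $\rho_1(\sigma\tau)$ two ways modulo $\ell^{r+1}$. On the one hand, $\rho_1(\sigma\tau) \equiv (1+\ell^r\mu(\sigma\tau))\rho_2(\sigma\tau)$. On the other hand, using multiplicativity of $\rho_1$ and then \eqref{eqn:rho1mu} for $\sigma$ and for $\tau$ separately,
\[
\rho_1(\sigma\tau) = \rho_1(\sigma)\rho_1(\tau) \equiv (1+\ell^r\mu(\sigma))\rho_2(\sigma)(1+\ell^r\mu(\tau))\rho_2(\tau) \pmod{\ell^{r+1}}.
\]
Now move the factor $1+\ell^r\mu(\tau)$ past $\rho_2(\sigma)$: since $\ell^r\mu(\tau)\rho_2(\sigma) \equiv \rho_2(\sigma)\bigl(\rho_2(\sigma)^{-1}\mu(\tau)\rho_2(\sigma)\bigr)\ell^r$ and $\rho_2(\sigma) \equiv \overline{\rho}(\sigma)$ modulo $\ell$, the conjugation by $\rho_2(\sigma)$ acts on the $\Lie(\redG)(\F_\ell)$-valued quantity exactly as $\sigma_{\ad}^{-1}$; one checks the bookkeeping so that the surviving term is $\sigma_{\ad}(\mu(\tau))$ with the convention in \eqref{eqn:rho1mu} (here I would be careful about whether $\mu(\sigma)$ multiplies on the left or right, and reconcile the sign/conjugation convention with the one declared for $\ad\overline{\rho}$). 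Collecting terms modulo $\ell^{r+1}$ and cancelling $\rho_2(\sigma\tau)$ gives $1+\ell^r\mu(\sigma\tau) \equiv 1 + \ell^r\bigl(\mu(\sigma) + \sigma_{\ad}(\mu(\tau))\bigr)$, which is the cocycle identity since $\ell^r\Lie(\redG)(\F_\ell)/\ell^{r+1}\cong \Lie(\redG)(\F_\ell)$. That $\mu$ is well-defined as a map to $\Lie(\redG)(\F_\ell)$ (not merely $\M_n$) is exactly the content of the exact sequence displayed just before the lemma, and continuity follows because $\rho_1,\rho_2$ are continuous and the target is finite.

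For (b), suppose first $\rho_1 \simeq \rho_2 \pmod{\ell^{r+1}}$: there is $g \in \GL_n(\Z/\ell^{r+1}\Z)$ with $\rho_1(\sigma) = g\rho_2(\sigma)g^{-1}$ for all $\sigma$. Reducing modulo $\ell^r$ and using $\rho_1 \equiv \rho_2$ there, $g$ commutes with the image of $\rho_1 \bmod \ell^r$; since $\overline{\rho}$ is absolutely irreducible, by Schur's lemma the image of $\overline{\rho}$ spans $\M_n(\F_\ell)$, and a short argument shows $g \equiv \lambda \pmod{\ell^r}$ for a scalar $\lambda \in (\Z/\ell^r\Z)^\times$; rescaling, we may take $g = 1 + \ell^r a$ with $a \in \M_n(\F_\ell)$. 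Substituting into $\rho_1(\sigma) = g\rho_2(\sigma)g^{-1}$ and expanding modulo $\ell^{r+1}$ yields $1 + \ell^r\mu(\sigma) \equiv 1 + \ell^r\bigl(a - \sigma_{\ad}(a)\bigr)$, so $\mu$ is a coboundary. Conversely, if $\mu(\sigma) = a - \sigma_{\ad}(a)$ for some $a \in \M_n(\F_\ell)$, lift $a$ arbitrarily and set $g = 1 + \ell^r a \in \GL_n(\Z/\ell^{r+1}\Z)$; running the same expansion backwards shows $g\rho_2(\sigma)g^{-1} \equiv \rho_1(\sigma) \pmod{\ell^{r+1}}$, i.e.\ $\rho_1 \simeq \rho_2 \pmod{\ell^{r+1}}$.

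The main obstacle is none of the algebra per se but the careful matching of conventions: getting the conjugation in \eqref{eqn:rho1mu} to land on $\sigma_{\ad}$ rather than $\sigma_{\ad}^{-1}$, and confirming that $\mu$ really takes values in $\Lie(\redG)(\F_\ell)$ and not just $\M_n(\F_\ell)$ — the latter is why coboundaries are allowed to be trivialized by an arbitrary $a \in \M_n(\F_\ell)$ while cocycles are constrained to $\Lie(\redG)(\F_\ell)$. The absolute irreducibility of $\overline{\rho}$ enters precisely once, in the Schur-lemma step identifying the centralizer of the residual image with scalars, which is what legitimizes normalizing the conjugating matrix to the form $1 + \ell^r a$.
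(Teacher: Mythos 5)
Your proof is correct and follows essentially the same route as the paper's: part (a) by directly expanding $\rho_1(\sigma\tau)=\rho_1(\sigma)\rho_1(\tau)$ modulo $\ell^{r+1}$ and collecting the $\ell^r$ term, part (b) by invoking Schur's lemma together with absolute irreducibility to normalize the conjugating matrix to $1+\ell^r a$ and then matching coefficients of $\ell^r$. One small wobble in (a) that you correctly flag yourself: the identity you wrote, $\mu(\tau)\rho_2(\sigma)=\rho_2(\sigma)\bigl(\rho_2(\sigma)^{-1}\mu(\tau)\rho_2(\sigma)\bigr)$, is not the relevant one, since in the product $(1+\ell^r\mu(\sigma))\rho_2(\sigma)(1+\ell^r\mu(\tau))\rho_2(\tau)$ the factor $1+\ell^r\mu(\tau)$ sits to the \emph{right} of $\rho_2(\sigma)$; what you want is $\rho_2(\sigma)\mu(\tau)=\bigl(\rho_2(\sigma)\mu(\tau)\rho_2(\sigma)^{-1}\bigr)\rho_2(\sigma)$, which yields $\sigma_{\ad}(\mu(\tau))$ directly with no sign reconciliation needed.
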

  
\begin{proof}
We verify the cocycle condition as follows: 
\begin{align*} 
\rho_1(\sigma\tau) = \rho_1(\sigma)\rho_1(\tau) &\equiv (1+\ell^r \mu(\sigma))\rho_2(\sigma) (1+\ell^r \mu(\tau))\rho_2(\tau) \\
&\equiv (1+\ell^r(\mu(\sigma)+\rho_2(\sigma)\mu(\tau)\rho_2(\sigma)^{-1}))\rho_2(\sigma)\rho_2(\tau) \\
&\equiv (1+\ell^r\mu(\sigma\tau))\rho_2(\sigma\tau) \pmod{\ell^{r+1}}
\end{align*}
so $\mu(\sigma\tau) = \mu(\sigma)+\sigma_{\ad}(\mu(\tau))$ as claimed.
For the second statement, by definition $\rho_1\simeq\rho_2 \pmod{\ell^{r+1}}$ if and only if there exists $a_r \in \GL_n(\Z/\ell^{r+1}\Z)$ such that for all $\sigma \in \Gal_{F,S}$ we have
\begin{equation}  \label{eqn:rho1rho2}
\rho_1(\sigma)\equiv a_r\rho_2(\sigma)a_r^{-1} \pmod{\ell^{r+1}}\,.
\end{equation}
Since $\rho_1(\sigma) \equiv \rho_2(\sigma) \pmod{\ell^r}$, the image of $a_r$ in $\GL_n(\Z/\ell^r\Z)$ centralizes the image of $\rho \pmod{\ell^r}$.  Since the image is irreducible, by Schur's lemma we have $a_r \bmod{\ell^r}$ is scalar, so without loss of generality we may suppose $a_r \equiv 1 \pmod{\ell^r}$, so that $a_r = 1+\ell^r a$ for some $a \in \M_n(\F_\ell)$.  Expanding \eqref{eqn:rho1rho2} then yields 
\begin{align*} 
\rho_1(\sigma)&\equiv(1+\ell^r a)\,\rho_2(\sigma)\,(1+\ell^r a)^{-1}
\equiv (1+\ell^r a)\,\rho_2(\sigma)\,(1-\ell^r a) \\
&\equiv (1+\ell^r a - \ell^r \rho_2(\sigma)a\rho_2(\sigma)^{-1})\rho_2(\sigma) \\
&\equiv (1+\ell^r(a-\sigma_{\ad}(a)))\rho_2(\sigma) \pmod{\ell^{r+1}}
\end{align*}
so $\mu(\sigma)=a-\sigma_{\ad}(a)$ by definition \eqref{eqn:rho1mu}.
\end{proof}

Our task now turns to finding an effective way to detect when $\mu$ is a coboundary.  For this purpose, we work with extensions of our representations using explicit parabolic groups.  The adjoint action of $\GL_n$ on $\M_n$ gives an exact sequence
\begin{equation} \label{eqn:LieGG}
0 \to \M_n \to \M_n \rtimes \GL_n \to \GL_n \to 1
\end{equation}
which extends to a linear representation via the \emph{parabolic subgroup}, as follows.  We embed 
\begin{equation} \label{eqn:GL2n}
\begin{aligned} 
\M_n \rtimes \GL_n &\hookrightarrow \GL_{2n} \\
(a,g) &\mapsto \begin{pmatrix} 1 & a \\ 0 & 1 \end{pmatrix} \begin{pmatrix} g & 0 \\ 0 & g \end{pmatrix} = \begin{pmatrix} g & ag \\ 0 & g \end{pmatrix} 
\end{aligned}
\end{equation}
(on points, realizing $\M_n \rtimes \GL_n$ as an algebraic matrix group).  The embedding \eqref{eqn:GL2n} is compatible with the exact sequence \eqref{eqn:LieGG}: the natural projection map 
\begin{equation}
\pi\colon \M_n \rtimes \GL_n \to \GL_n
\end{equation} 
corresponds to the projection onto the top left entry, it is split by the diagonal embedding $\GL_n \hookrightarrow \GL_{2n}$, and it has kernel isomorphic to $\M_n$ in the upper-right entry.  We will identify $\M_n \rtimes \GL_n$ and its subgroups with their image in $\GL_{2n}$.

Let $\utr \colon (\M_{n}\rtimes\GL_n)(\F_\ell) \to \F_\ell$ denote the trace of the upper right $n \times n$-block.

\begin{lem} \label{lem:utrconj}
The map $\utr$ is well-defined on conjugacy classes in $(\M_n\rtimes\GL_n)(\F_\ell)$.
\end{lem}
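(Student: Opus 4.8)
The plan is to show directly that $\utr$ is a class function on $(\M_n\rtimes\GL_n)(\F_\ell)$ using the explicit matrix realization \eqref{eqn:GL2n}. First I would reduce to checking invariance under conjugation by elements of the image of $\M_n\rtimes\GL_n$ inside $\GL_{2n}$, and then further decompose an arbitrary such element as a product of a ``translation'' $\left(\begin{smallmatrix} 1 & b \\ 0 & 1 \end{smallmatrix}\right)$ and a ``diagonal'' $\left(\begin{smallmatrix} h & 0 \\ 0 & h \end{smallmatrix}\right)$, so it suffices to verify invariance under conjugation by each of these two types separately.

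For the diagonal type: conjugating $\left(\begin{smallmatrix} g & ag \\ 0 & g \end{smallmatrix}\right)$ by $\left(\begin{smallmatrix} h & 0 \\ 0 & h \end{smallmatrix}\right)$ yields $\left(\begin{smallmatrix} hgh^{-1} & h(ag)h^{-1} \\ 0 & hgh^{-1} \end{smallmatrix}\right)$, so the upper-right block changes from $ag$ to $h(ag)h^{-1}$, and $\utr$ is unchanged because ordinary trace on $\M_n$ is conjugation-invariant. For the translation type: conjugating $\left(\begin{smallmatrix} g & ag \\ 0 & g \end{smallmatrix}\right)$ by $\left(\begin{smallmatrix} 1 & b \\ 0 & 1 \end{smallmatrix}\right)$ gives $\left(\begin{smallmatrix} 1 & b \\ 0 & 1 \end{smallmatrix}\right)\left(\begin{smallmatrix} g & ag \\ 0 & g \end{smallmatrix}\right)\left(\begin{smallmatrix} 1 & -b \\ 0 & 1 \end{smallmatrix}\right) = \left(\begin{smallmatrix} g & ag+bg-gb \\ 0 & g \end{smallmatrix}\right)$, so the upper-right block changes by the additive term $bg-gb$, whose trace vanishes since $\tr(bg)=\tr(gb)$. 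Hence $\utr$ is invariant in both cases, and therefore under all conjugations within $(\M_n\rtimes\GL_n)(\F_\ell)$.

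Since any such computation is routine linear algebra, I do not expect a genuine obstacle; the only point requiring a word of care is that one really only needs invariance under conjugation \emph{by elements of the group itself} (conjugacy classes are orbits under this action), not under all of $\GL_{2n}(\F_\ell)$ — and it is precisely the structure of the parabolic that makes the two defining generators behave well. I would write the argument in essentially the two displayed computations above, noting at the end that every element of $(\M_n\rtimes\GL_n)(\F_\ell)$ is a product of the two generator types, so invariance under each suffices.
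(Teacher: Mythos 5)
Your proof is correct and is essentially the same as the paper's: both work in the matrix realization \eqref{eqn:GL2n}, compute the upper-right block after conjugation, and observe it changes from $ag$ to a conjugate of $ag$ plus a commutator $bg'-g'b$, so $\utr$ is preserved by cyclicity of trace. The only cosmetic difference is that you split the conjugating element into its unipotent and diagonal factors, whereas the paper conjugates by the general element $\left(\begin{smallmatrix} h & bh \\ 0 & h \end{smallmatrix}\right)$ in a single computation.
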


\begin{proof}
For all $g,h \in \GL_n(\F_\ell)$ and $a,b \in \M_n(\F_\ell)$ we have
\begin{equation}
 \begin{pmatrix} h & bh \\ 0 & h \end{pmatrix}\begin{pmatrix} g & ag \\ 0 & g \end{pmatrix}\begin{pmatrix} h^{-1} & -h^{-1}b \\ 0 & h^{-1} \end{pmatrix} = \begin{pmatrix} hgh^{-1} & hagh^{-1}+bhgh^{-1}-hgh^{-1}b \\ 0 & hgh^{-1} \end{pmatrix} 
 \end{equation}
 so the upper trace is $\tr(hagh^{-1}+bhgh^{-1}-hgh^{-1}b) = \tr(ag)$.
 \end{proof}

For $\mu\in\cocyc^1(F,\ad\rhobar;\Lie(\redG)(\F_\ell))$ we define
\begin{equation}  \label{eqn:varphiFSglie}
\begin{aligned}
\varphi_{\mu} \colon \Gal_{F,S} &\to (\Lie(\redG) \rtimes \redG)(\F_\ell) \leq \GL_{2n}(\F_\ell) \\
\sigma &\mapsto (\mu(\sigma),\overline{\rho}(\sigma)) = \begin{pmatrix} \overline{\rho}(\sigma) & \mu(\sigma)\overline{\rho}(\sigma) \\
0 & \overline{\rho}(\sigma) \end{pmatrix} \,.
\end{aligned}
\end{equation}

\begin{prop} \label{prop:muandtrace}
Let $\mu\in\cocyc^1(F,\ad\rhobar;\Lie(\redG)(\F_\ell))$.  Then the following statements hold.
\begin{enumalph}
\item  The map $\varphi_\mu$ defined by \eqref{eqn:varphiFSglie}
is a group homomorphism, and $\pi\circ\varphi_{\mu}=\overline{\rho}$.
\item We have $\mu\in\cobound^1(F, \ad\rhobar;\M_n(\F_\ell))$ if and only if $\varphi_\mu$ is conjugate to $\varphi_0=\mymat{\rhobar}00{\rhobar}$ by an element of $\M_n(\Fl)\leq (\M_n \rtimes \GL_n)(\F_\ell)$.  
\begin{equation}  \label{eqn:uppertrace}
\utr \varphi_\mu(\sigma) = \tr\bigl(\mu(\sigma)\overline{\rho}(\sigma)\bigr) 
\equiv \frac{\tr \rho_1(\sigma) - \tr \rho_2(\sigma)}{\ell^r} \pmod{\ell}. 
\end{equation}
\end{enumalph}
\end{prop}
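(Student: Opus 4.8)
The plan is to establish the three parts (a)--(c) in order, as each builds on the previous. For part (a), I would verify directly that $\varphi_\mu$ is a homomorphism by computing the product $\varphi_\mu(\sigma)\varphi_\mu(\tau)$ using the matrix form in \eqref{eqn:varphiFSglie}: multiplying the two upper-triangular block matrices gives upper-right block $\overline{\rho}(\sigma)\mu(\tau)\overline{\rho}(\tau) + \mu(\sigma)\overline{\rho}(\sigma)\overline{\rho}(\tau)$, which should equal $\mu(\sigma\tau)\overline{\rho}(\sigma\tau) = \bigl(\mu(\sigma) + \sigma_{\ad}(\mu(\tau))\bigr)\overline{\rho}(\sigma)\overline{\rho}(\tau)$; this is exactly the cocycle relation defining $\cocyc^1$, so it holds. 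That the image lands in $(\Lie(\redG)\rtimes\redG)(\F_\ell)$ follows because $\mu$ takes values in $\Lie(\redG)(\F_\ell)$ and $\overline{\rho}$ in $\redG(\F_\ell)$. The identity $\pi\circ\varphi_\mu = \overline{\rho}$ is immediate from the fact that $\pi$ reads off the top-left block.

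For part (b), I would unwind the definition of coboundary together with the conjugation formula. Conjugating $\varphi_0 = \mymat{\rhobar}{0}{0}{\rhobar}$ by the element $\mymat{1}{a}{0}{1} \in \M_n(\Fl)$ produces a block-upper-triangular matrix whose upper-right block is $a\overline{\rho}(\sigma) - \overline{\rho}(\sigma)a = (a - \sigma_{\ad}(a))\overline{\rho}(\sigma)$; so $\varphi_\mu$ is $\M_n(\Fl)$-conjugate to $\varphi_0$ if and only if there exists $a\in\M_n(\F_\ell)$ with $\mu(\sigma) = a - \sigma_{\ad}(a)$ for all $\sigma$, which is precisely the condition $\mu\in\cobound^1(F,\ad\rhobar;\M_n(\F_\ell))$. (One should note that conjugating $\varphi_0$ by a general element of $\M_n\rtimes\GL_n$ would also require the $\GL_n$-component to centralize $\overline{\rho}$, hence be scalar by Schur's lemma since $\overline{\rho}$ is absolutely irreducible — but restricting the conjugating element to lie in $\M_n(\Fl)$ as in the statement sidesteps this, and the direction we need is the clean one above.)

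For part (c) — the statement we are ultimately asked to prove — I would combine Lemma \ref{lem:utrconj} with the defining congruence \eqref{eqn:rho1mu}. By definition, $\utr\varphi_\mu(\sigma) = \tr(\mu(\sigma)\overline{\rho}(\sigma))$ straight from the matrix shape in \eqref{eqn:varphiFSglie}. For the congruence to $\frac{\tr\rho_1(\sigma)-\tr\rho_2(\sigma)}{\ell^r}$, I would start from \eqref{eqn:rho1mu}, namely $\rho_1(\sigma)\equiv(1+\ell^r\mu(\sigma))\rho_2(\sigma)\pmod{\ell^{r+1}}$ — here $\mu(\sigma)$ is understood via a fixed lift to $\M_n(\Z_\ell)$, which is harmless since everything is taken mod $\ell$ in the end — and take traces of both sides: $\tr\rho_1(\sigma)\equiv\tr\rho_2(\sigma) + \ell^r\tr(\mu(\sigma)\rho_2(\sigma))\pmod{\ell^{r+1}}$. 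Since $\rho_2(\sigma)\equiv\overline{\rho}(\sigma)\pmod{\ell}$ and the term is already multiplied by $\ell^r$, we may replace $\rho_2(\sigma)$ by $\overline{\rho}(\sigma)$ inside the trace without changing the value mod $\ell^{r+1}$. Dividing by $\ell^r$ (valid since $\tr\rho_1(\sigma)\equiv\tr\rho_2(\sigma)\pmod{\ell^r}$, as $\rho_1\equiv\rho_2\pmod{\ell^r}$) gives $\frac{\tr\rho_1(\sigma)-\tr\rho_2(\sigma)}{\ell^r}\equiv\tr(\mu(\sigma)\overline{\rho}(\sigma))\pmod{\ell}$, as desired.

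The main obstacle — really the only subtle point — is bookkeeping about lifts and well-definedness: $\mu(\sigma)$ naturally lives in $\Lie(\redG)(\F_\ell)$, so the expression $1+\ell^r\mu(\sigma)$ in \eqref{eqn:rho1mu} and the trace computation implicitly fix an arbitrary lift to $\M_n(\Z/\ell^{r+1}\Z)$; I would make clear that the quantity $\ell^r\tr(\mu(\sigma)\rho_2(\sigma))\bmod\ell^{r+1}$, equivalently $\tr(\mu(\sigma)\overline{\rho}(\sigma))\bmod\ell$, is independent of this choice, and that the division by $\ell^r$ in \eqref{eqn:uppertrace} is legitimate precisely because of the standing hypothesis $\rho_1\equiv\rho_2\pmod{\ell^r}$. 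Everything else is a routine block-matrix computation relying on Lemma \ref{lem:utrconj} for the conjugacy-invariance of $\utr$.
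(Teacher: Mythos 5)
Your proof is correct and follows essentially the same route as the paper's: parts (a) and (b) are the same block-matrix computations, and for part (c) you simply spell out in full the trace-of-\eqref{eqn:rho1mu} calculation that the paper leaves as an immediate consequence. Your extra remarks about lifts and Schur's lemma are harmless side notes; neither is needed.
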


\begin{proof}
For (a), the cocycle condition implies that $\varphi_\mu$ is a group homomorphism: the upper right entry of $\varphi_\mu(\sigma\tau)$ is
\[ \mu(\sigma\tau)\overline{\rho}(\sigma\tau)= (\mu(\sigma)+\overline{\rho}(\sigma)\mu(\tau)\overline{\rho}(\sigma)^{-1})\overline{\rho}(\sigma)\overline{\rho}(\tau) = \mu(\sigma)\overline{\rho}(\sigma)\overline{\rho}(\tau) + \overline{\rho}(\sigma)\mu(\tau)\overline{\rho}(\tau) \]
which is equal to the upper right entry of $\varphi_\mu(\sigma)\varphi_\mu(\tau)$ obtained by matrix multiplication.  

For (b), the calculation
\begin{equation} \label{eqn:pmatrixaconj}
\begin{pmatrix} 1 & a \\ 0 & 1 \end{pmatrix}\begin{pmatrix} \overline{\rho}(\sigma) & 0 \\ 0 & \overline{\rho}(\sigma) \end{pmatrix} \begin{pmatrix} 1 & -a \\ 0 & 1 \end{pmatrix}
= \begin{pmatrix} \overline{\rho}(\sigma) & a\overline{\rho}(\sigma)-\overline{\rho}(\sigma)a \\ 0 & \overline{\rho}(\sigma) \end{pmatrix} 
\end{equation}
shows that $\varphi_\mu=a\varphi_0 a^{-1}$ for $a \in \M_n(\F_\ell)$ if and only if $\mu(\sigma)\overline{\rho}(\sigma) = a\overline{\rho}(\sigma)-\overline{\rho}(\sigma)a$ for all $\sigma \in \Gal_{F,S}$.  Multiplying on the right by $\overline{\rho}(\sigma)^{-1}$, we see this is equivalent to $\mu(\sigma) = 
a-\sigma_{\ad}(a)$ for all $\sigma \in \Gal_{F,S}$.

Finally, (c) follows directly from \eqref{eqn:rho1mu}.
\end{proof}

\begin{defn} \label{defn:extendingrho}
Let $K$ be the fixed field under $\overline{\rho}$.  We say a pair $(L,\varphi)$ \defi{extends} $(K,\overline{\rho})$ if
\[ \varphi\colon \Gal_{F,S} \rightarrow (\Lie(\redG) \rtimes \redG)(\F_\ell) \leq \GL_{2n}(\F_\ell) \] 
is a representation with fixed field $L$ such that
$\pi\circ\varphi = \overline{\rho}$.
\end{defn}

If $(L,\varphi)$ extends $(K,\overline{\rho})$, then
$L\supseteq K$ is an $\ell$-elementary abelian extension unramified outside $S$, since
$\varphi$ induces an injective group homomorphism
$\Gal(L\,|\,K)\hookrightarrow\Lie(\redG)(\F_\ell)$.

\begin{defn} \label{defn:obstructing}
A pair $(L,\varphi)$ extending $(K,\overline{\rho})$ is \defi{obstructing} if $\utr \varphi \not \equiv 0 \psmod{\ell}$, and we call the group homomorphism $\varphi$ an \defi{obstructing extension} of $\overline{\rho}$.  An element $\sigma \in \Gal(L \,|\, F)$ such that $\utr \varphi(\sigma) \not \equiv 0 \psmod{\ell}$ is called \defi{obstructing} for $\varphi$. 
\end{defn}

We note the following corollary of Proposition \ref{prop:muandtrace}.

\begin{cor} \label{cor:yupmuFS}
Let $\mu$ be defined by \eqref{eqn:rho1mu} and
$\varphi_\mu$ by \eqref{eqn:varphiFSglie}.
Then $\varphi_\mu$ extends $\overline{\rho}$, and $\varphi_\mu$ is obstructing if and only if $\mu\not\in\cobound^1(F,\ad\rhobar;\M_n(\F_\ell))$.
\end{cor}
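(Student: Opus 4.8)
The statement to prove is Corollary \ref{cor:yupmuFS}: if $\mu$ is defined by \eqref{eqn:rho1mu} and $\varphi_\mu$ by \eqref{eqn:varphiFSglie}, then $\varphi_\mu$ extends $\overline{\rho}$, and $\varphi_\mu$ is obstructing if and only if $\mu \notin \cobound^1(F, \ad\rhobar; \M_n(\F_\ell))$.

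The plan is to simply assemble the pieces from Lemma \ref{lem:iscocyc} and Proposition \ref{prop:muandtrace}, since the corollary is essentially a repackaging. First I would note that $\mu$, being defined by \eqref{eqn:rho1mu}, is a cocycle in $\cocyc^1(F, \ad\rhobar; \Lie(\redG)(\F_\ell))$ by Lemma \ref{lem:iscocyc}(a), so that $\varphi_\mu$ is defined and, by Proposition \ref{prop:muandtrace}(a), is a group homomorphism with $\pi \circ \varphi_\mu = \overline{\rho}$; its image lands in $(\Lie(\redG) \rtimes \redG)(\F_\ell) \le \GL_{2n}(\F_\ell)$ by construction. To conclude that $\varphi_\mu$ extends $\overline{\rho}$ in the sense of Definition \ref{defn:extendingrho}, I would take $L$ to be the fixed field of $\ker \varphi_\mu$; then $(L, \varphi_\mu)$ is a pair of the required form, and since $\overline{\rho}$ is absolutely irreducible, $K$ is indeed the fixed field of $\overline{\rho}$ so the compatibility $\pi \circ \varphi_\mu = \overline{\rho}$ makes $(L, \varphi_\mu)$ extend $(K, \overline{\rho})$.

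For the second assertion, I would unwind the definition of obstructing from Definition \ref{defn:obstructing}: $\varphi_\mu$ is obstructing precisely when $\utr \varphi_\mu \not\equiv 0 \pmod{\ell}$. By Proposition \ref{prop:muandtrace}(b), $\mu \in \cobound^1(F, \ad\rhobar; \M_n(\F_\ell))$ if and only if $\varphi_\mu$ is conjugate to $\varphi_0$ by an element of $\M_n(\F_\ell)$. Since $\varphi_0 = \mymat{\rhobar}00{\rhobar}$ has vanishing upper-right block, $\utr \varphi_0 \equiv 0$; and by Lemma \ref{lem:utrconj}, $\utr$ is constant on conjugacy classes in $(\M_n \rtimes \GL_n)(\F_\ell)$, hence in particular is unchanged under conjugation by $\M_n(\F_\ell) \le (\M_n \rtimes \GL_n)(\F_\ell)$. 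Therefore if $\mu$ is a coboundary, $\utr \varphi_\mu \equiv \utr \varphi_0 \equiv 0$, so $\varphi_\mu$ is not obstructing. Conversely, if $\mu$ is not a coboundary, I would need to produce a $\sigma$ with $\utr \varphi_\mu(\sigma) \not\equiv 0$; this follows from the equivalence in Proposition \ref{prop:muandtrace}(b), but I should be slightly careful, since that part of the proposition compares $\varphi_\mu$ to $\varphi_0$ up to conjugacy rather than directly asserting non-vanishing of $\utr$. The clean way to close this gap is to observe that $\utr \varphi_\mu$ is a well-defined function on $\Gal(L \,|\, F)$ (by Lemma \ref{lem:utrconj}, it only depends on the conjugacy class of $\varphi_\mu(\sigma)$), and to note that $\varphi_\mu \mapsto \utr \varphi_\mu$ only depends on the cohomology class: two cohomologous cocycles give $\M_n(\F_\ell)$-conjugate homomorphisms $\varphi_\mu$ by \eqref{eqn:pmatrixaconj}, hence the same $\utr$. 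Thus $\utr \varphi_\mu \equiv 0$ would force $\mu$ to have the same $\utr$-invariant as a coboundary, and combined with part (c) of Proposition \ref{prop:muandtrace} — which realizes $\utr \varphi_\mu$ as the leading term $(\tr\rho_1 - \tr\rho_2)/\ell^r$ — I can trace back through the cocycle-to-coboundary dictionary.

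The only genuine subtlety — and the step I would spend the most care on — is the converse direction: showing $\utr \varphi_\mu \equiv 0$ \emph{implies} $\mu$ is a coboundary, rather than merely the reverse. The honest route is to go through Proposition \ref{prop:muandtrace}(b) contrapositively: if $\mu \notin \cobound^1$, then $\varphi_\mu$ is \emph{not} $\M_n(\F_\ell)$-conjugate to $\varphi_0$, and one must argue this forces $\utr \varphi_\mu \not\equiv 0$. Since all the $\varphi$ extending $\overline{\rho}$ with $\overline{\rho}$ absolutely irreducible form a torsor-like family parametrized (up to $\M_n(\F_\ell)$-conjugacy, equivalently up to coboundaries) by $\coho^1(F, \ad\rhobar; \Lie(\redG)(\F_\ell))$, and $\utr$ gives a linear functional on cocycles vanishing exactly on coboundaries — this is the content we need — I would make that linearity-and-vanishing explicit: the map $\mu \mapsto (\sigma \mapsto \tr(\mu(\sigma)\overline{\rho}(\sigma)))$ is $\F_\ell$-linear in $\mu$, and by the coboundary computation $\mu(\sigma) = a - \sigma_{\ad}(a)$ gives $\tr((a - \sigma_{\ad}(a))\overline{\rho}(\sigma)) = \tr(a\overline{\rho}(\sigma)) - \tr(\overline{\rho}(\sigma)a\overline{\rho}(\sigma)^{-1}\overline{\rho}(\sigma)) = \tr(a\overline{\rho}(\sigma)) - \tr(\overline{\rho}(\sigma)a) = 0$, so coboundaries are in the kernel; but for the converse I do \emph{not} claim the kernel is exactly the coboundaries in general — rather, I invoke Proposition \ref{prop:muandtrace}(b) directly, which already tells us coboundary $\iff$ $\M_n$-conjugate to $\varphi_0$, and then I only need: $\varphi_\mu$ $\M_n$-conjugate to $\varphi_0$ $\iff$ $\utr\varphi_\mu \equiv 0$. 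The forward implication is Lemma \ref{lem:utrconj} as above; the reverse is where care is needed, and I would handle it by noting that $\utr \varphi_\mu \equiv 0$ means the upper-right block $\mu(\sigma)\overline{\rho}(\sigma)$ is traceless against... — actually the cleanest fix is to not prove a standalone $\iff$ with $\utr$ but to directly cite Proposition \ref{prop:muandtrace}(b)(c): by (c), $\utr\varphi_\mu(\sigma) \equiv (\tr\rho_1(\sigma)-\tr\rho_2(\sigma))/\ell^r$, and if this vanishes for all $\sigma$ then $\tr\rho_1 \equiv \tr\rho_2 \pmod{\ell^{r+1}}$, whence by Theorem \ref{thm:Brauer-Nesbit} we get $\rho_1 \simeq \rho_2 \pmod{\ell^{r+1}}$, and by Lemma \ref{lem:iscocyc}(b) this is exactly $\mu \in \cobound^1$. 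So the converse is really a consequence of the Brauer--Nesbitt/Carayol theorem plus Lemma \ref{lem:iscocyc}(b), and I would present it that way.

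Putting it together, the proof is short: (i) $\mu \in \cocyc^1$ by Lemma \ref{lem:iscocyc}(a), so $\varphi_\mu$ is a homomorphism extending $\overline{\rho}$ by Proposition \ref{prop:muandtrace}(a) and Definition \ref{defn:extendingrho}; (ii) if $\mu \in \cobound^1$ then $\utr\varphi_\mu \equiv \utr\varphi_0 \equiv 0$ by Proposition \ref{prop:muandtrace}(b) and Lemma \ref{lem:utrconj}, so $\varphi_\mu$ is not obstructing; (iii) if $\utr\varphi_\mu \equiv 0$ then by Proposition \ref{prop:muandtrace}(c), $\tr\rho_1 \equiv \tr\rho_2 \pmod{\ell^{r+1}}$, so $\rho_1 \simeq \rho_2 \pmod{\ell^{r+1}}$ by Theorem \ref{thm:Brauer-Nesbit}, hence $\mu \in \cobound^1$ by Lemma \ref{lem:iscocyc}(b). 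This contrapositive chain gives exactly ``$\varphi_\mu$ obstructing $\iff \mu \notin \cobound^1$,'' completing the proof.
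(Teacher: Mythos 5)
Your proof is correct and, after some exploratory detours in the middle paragraphs, the argument you settle on in the final summary is exactly the one in the paper: (i) $\varphi_\mu$ extends $\overline{\rho}$ by Proposition \ref{prop:muandtrace}(a); (ii) coboundary $\Rightarrow \utr \equiv 0$ via Proposition \ref{prop:muandtrace}(b) and Lemma \ref{lem:utrconj}; (iii) $\utr \equiv 0 \Rightarrow$ coboundary via Proposition \ref{prop:muandtrace}(c), Theorem \ref{thm:Brauer-Nesbit}, and Lemma \ref{lem:iscocyc}(b). You correctly identified that the converse direction is the subtle step and that it should be routed through Carayol's theorem rather than any purely linear-algebraic claim about the kernel of $\utr$ on cocycles, which is precisely how the paper handles it.
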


\begin{proof}
The map $\varphi_\mu$ extends $\overline{\rho}$ by Proposition~\ref{prop:muandtrace}(a).  We prove the contrapositive of the second statement: $\mu\in\cobound^1(F,\ad\rhobar;\M_n(\F_\ell))$ if and only if $\utr\varphi_\mu\equiv 0\pmod{\ell}$.  The implication ${(\Rightarrow)}$ is immediate from Proposition~\ref{prop:muandtrace}(b) and the invariance of $\utr$ by conjugation (Lemma \ref{lem:utrconj}).  For $(\Leftarrow)$, if $\utr\varphi_\mu\equiv 0\pmod{\ell}$ then $\tr\rho_1\equiv\tr\rho_2\pmod{\ell^{r+1}}$ by Proposition \ref{prop:muandtrace}(c). Now Theorem~\ref{thm:Brauer-Nesbit} implies $\rho_1\simeq\rho_2\pmod{\ell^{r+1}}$,
hence $\mu\in\cobound^1(F,\ad\rhobar;\M_n(\F_\ell))$ by Lemma~\ref{lem:iscocyc}(b).
\end{proof}

Before we conclude this section, we note the following important improvement.
Let $\Lie^0(\redG)\leq\Lie(\redG)$ be the subgroup of trace zero matrices,
and note that $\Lie^0(\redG)(\Fl)$ is invariant by the adjoint residual representation.

\begin{lemma} \label{lem:detmatch}
If $\det \rho_1=\det \rho_2$, then $\mu$ takes values in $\Lie^0(\redG)(\F_\ell)$.
\end{lemma}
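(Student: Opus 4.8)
The plan is to trace through the defining relation \eqref{eqn:rho1mu} and apply the multiplicativity of the determinant. First I would take determinants on both sides of the congruence $\rho_1(\sigma) \equiv (1+\ell^r \mu(\sigma))\rho_2(\sigma) \pmod{\ell^{r+1}}$. Since $\det \rho_1 = \det \rho_2$ by hypothesis, and $\det \rho_2(\sigma)$ is a unit in $\Z/\ell^{r+1}\Z$, this gives $\det(1+\ell^r \mu(\sigma)) \equiv 1 \pmod{\ell^{r+1}}$.

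Next I would expand $\det(1+\ell^r \mu(\sigma))$. Working in $\Z/\ell^{r+1}\Z$, every term in the Leibniz expansion of this determinant that involves two or more off-diagonal-type factors of $\ell^r \mu(\sigma)$ is divisible by $\ell^{2r}$, hence by $\ell^{r+1}$ (as $r \geq 1$); so only the product of diagonal entries $\prod_i (1 + \ell^r \mu(\sigma)_{ii})$ survives, and expanding that product and again discarding the terms divisible by $\ell^{2r}$ leaves $\det(1+\ell^r \mu(\sigma)) \equiv 1 + \ell^r \tr \mu(\sigma) \pmod{\ell^{r+1}}$. Combining with the previous step yields $\ell^r \tr \mu(\sigma) \equiv 0 \pmod{\ell^{r+1}}$, i.e.\ $\tr \mu(\sigma) \equiv 0 \pmod{\ell}$. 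Since $\mu(\sigma) \in \Lie(\redG)(\F_\ell)$, this says exactly that $\mu(\sigma) \in \Lie^0(\redG)(\F_\ell)$ for every $\sigma \in \Gal_{F,S}$, which is the assertion.

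I would also remark that the fact that $\Lie^0(\redG)(\F_\ell)$ is stable under $\ad \overline{\rho}$ — which the statement notes just above — makes $\mu$ a cocycle valued in that smaller module, so the subsequent cohomological discussion can be carried out with $\Lie^0$ in place of $\Lie$; but strictly speaking that stability is not needed to prove the lemma itself, only to use it. The one point requiring a little care is the determinant expansion modulo $\ell^{r+1}$: one must be sure that all cross terms really are killed, which is where the hypothesis $r \geq 1$ enters (for $r \geq 1$ we have $2r \geq r+1$). This is the only place where anything could go wrong, and it is routine; there is no serious obstacle.
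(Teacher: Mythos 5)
Your proof is correct and follows essentially the same route as the paper: take determinants in \eqref{eqn:rho1mu}, expand $\det(1+\ell^r\mu(\sigma)) \equiv 1 + \ell^r\tr\mu(\sigma)$, and conclude $\tr\mu(\sigma) \equiv 0 \pmod{\ell}$. The paper phrases the congruence modulo $\ell^{2r}$ rather than $\ell^{r+1}$, but since $2r \ge r+1$ for $r \ge 1$ this is the same observation, and your explicit tracking of where $r\ge 1$ enters is a reasonable point of care.
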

\begin{proof}
By \eqref{eqn:rho1mu}, we have $1=\det(\rho_1\rho_2^{-1})=\det(1+\ell^r \mu) \equiv 1 + \ell^r \tr \mu \pmod{\ell^{2r}}$ so accordingly $\tr \mu(\sigma) \equiv 0 \psmod{\ell}$ and $\mu(\sigma)\in\Lie^0(\redG)(\F_\ell)$ for all $\sigma\in\Gal_{F,S}$.
\end{proof}

In view of Lemma \ref{lem:detmatch}, we note that Proposition \ref{prop:muandtrace} and Corollary \ref{cor:yupmuFS} hold when replacing $\Lie(\redG)$ by $\Lie^0(\redG)$.  

\subsection{Testing equivalence of representations}

We now use Corollary~\ref{cor:yupmuFS} to prove Theorem \ref{thm:state}.  

\begin{alg} \label{alg:detrep}
The following algorithm takes as input the data \eqref{eqn:inputdata} and gives as output 
\begin{center}
\textup{\texttt{true}} if $\rho_1 \simeq \rho_2$; or \\
\textup{\texttt{false}} and a witness prime $\frakp$ if $\rho_1 \not\simeq \rho_2$.
\end{center}

\begin{enumalg}
\item Apply Algorithm \ref{alg:detresidual}; if $\overline{\rho}_1 \not\simeq \overline{\rho}_2$, return \texttt{false} and the witness prime $\frakp$.  Otherwise, let $K$ be the fixed field under the common residual representation $\overline{\rho}$.
\item Using the algorithm of Lemma \ref{lem:enumerateallfields}, enumerate all $\ell$-elementary abelian extensions $L \supseteq K$ unramified away from $S$ and such that $\Gal(L \,|\, F)$ is isomorphic to a subgroup of $(\Lie(\redG) \rtimes \redG)(\F_\ell)$.  
\item For each of these finitely many fields $L$, by enumeration of injective group homomorphisms $\Gal(L\,|\,F)\hookrightarrow(\Lie(\redG)\rtimes\redG)(\F_\ell)$, find all obstructing pairs $(L,\varphi)$ extending $(K,\overline{\rho})$ up to conjugation by $(\M_n\rtimes\GL_n)(\F_\ell)$.
\item For each such pair $(L,\varphi)$, find a prime $\frakp \not\in S$ such that $\utr \varphi(\Frob_\frakp) \not \equiv 0 \psmod{\ell}$.
\item Check if $\tr \rho_1(\Frob_\frakp) = \tr \rho_2(\Frob_\frakp)$ for the primes in Step 4.  If equality holds for all primes, return \texttt{true}; if equality fails for $\frakp$, return \texttt{false} and the prime $\frakp$.
\end{enumalg}
\end{alg}

\begin{remark}
In Step 2, we may instead use algorithmic class field theory (and we will do so in practice).  Moreover, if we know that $\det\rho_1=\det\rho_2$, then we can replace $\Lie(\redG)$ by $\Lie^0(G)$ by Lemma \ref{lem:detmatch}.
\end{remark}

\begin{proof}[Proof of correctness]
By the Chebotarev density theorem, in Step 4 we will eventually find a prime $\frakp \not\in S$, since $\utr$ is well-defined on conjugacy classes by Lemma \ref{lem:utrconj}.  In the final step, if equality does not hold for some prime $\frakp$, we have found a witness, and we correctly return \texttt{false}.  

Otherwise, we return \texttt{true} and we claim that $\rho_1 \simeq \rho_2$ so the output is correct.  Indeed, assume for purposes of contradiction that $\rho_1 \not\simeq \rho_2$.  Then there exists $r\geq 1$ such that $\rho_1\simeq\rho_2\pmod{\ell^r}$ but $\rho_1\not\simeq\rho_2\pmod{\ell^{r+1}}$. We can assume as before that
$\rho_1\equiv\rho_2\pmod{\ell^r}$.
We define $\mu$ by \eqref{eqn:rho1mu} and $\varphi_\mu$ by \eqref{eqn:varphiFSglie}. Let $L_\mu$ be the fixed field of $\varphi_\mu$. By Lemma~\ref{lem:iscocyc} we have $\mu\not\in\cobound^1(F,\Lie(\redG)(\F_\ell);\M_n(\F_\ell))$, hence by Corollary~\ref{cor:yupmuFS} $\varphi_\mu$ extends $\overline{\rho}$ and is obstructing. It follows that the pair $(L_\mu,\varphi_\mu)$ is, up to conjugation by $(\M_n\rtimes\GL_n)(\F_\ell)$, among the pairs computed in Step~3. In particular there is a prime $\frakp$ in Step 4 such that $\utr\varphi_\mu(\Frob_\frakp) \not\equiv 0 \psmod{\ell}$.  But then by \eqref{eqn:uppertrace} we would have
$\tr \rho_1(\Frob_\frakp) \neq \tr \rho_2(\Frob_\frakp)$, contradicting the verification carried out in Step 5.
\end{proof}

The correctness of Algorithm \ref{alg:detrep} then proves Theorem \ref{thm:state}.  

\begin{remark}
In the case $\redG=\GSp_{2g}$, using an effective version of the Chebotarev density theorem, Achter \cite[Lemma 1.2]{Achter} has given an effective upper bound in terms of the conductor and genus to detect when two abelian surfaces are isogenous.  This upper bound is of theoretical interest, but much too large to be useful in practice.  In a similar way, following the above strategy one could give theoretical (but practically useless) upper bounds to detect when two Galois representations are equivalent.  
\end{remark}

\section{Core-free subextensions} \label{sec:corefree}

The matrix groups arising in the previous section are much too large to work with in practice.  In this section, we find comparatively small extensions whose Galois closure give rise to the desired representations.

\subsection{Core-free subgroups}

We begin with a condition that arises naturally in group theory and Galois theory.  

\begin{defn}
Let $G$ be a finite group.  A subgroup $H \leq G$ is \defi{core-free} if $G$ acts faithfully on the cosets $G/H$.  
\end{defn}  

Equivalently, $H \leq G$ is core-free if and only if $\bigcap_{g \in G} gHg^{-1} = \{1\}$.  For example, the subgroup $\{1\}$ is core-free.

\begin{defn}
Let $K \supseteq F$ be a finite Galois extension of fields with $G=\Gal(K \,|\, F)$.  A subextension $K \supseteq K_0 \supseteq F$ is \defi{core-free} if $\Gal(K \,|\, K_0) \leq G$ is a core-free subgroup.
\end{defn}

\begin{lem} \label{lem:iscorefree}
The subextension $K \supseteq K_0 \supseteq F$ is core-free if and only if $K$ is the Galois closure of $K_0$ over $F$.
\end{lem}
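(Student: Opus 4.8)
The plan is to unwind the Galois correspondence and the definition of core-free into a single statement about the intersection of conjugates of $\Gal(K\,|\,K_0)$. Write $G=\Gal(K\,|\,F)$ and $H=\Gal(K\,|\,K_0)\leq G$. The conjugates $gHg^{-1}$ for $g\in G$ are exactly the subgroups $\Gal(K\,|\,\sigma(K_0))$ as $\sigma$ ranges over $G$ (equivalently, over the embeddings of $K_0$ into $K$ fixing $F$), since applying $\sigma$ to $K_0$ conjugates the corresponding subgroup. Therefore $\bigcap_{g\in G}gHg^{-1}=\Gal\bigl(K\,\big|\,\prod_{\sigma}\sigma(K_0)\bigr)$, where the compositum is taken inside $K$ over all such $\sigma$. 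This compositum is precisely the Galois closure of $K_0$ over $F$ inside $K$, because it is the smallest subfield of $K$ containing $K_0$ and stable under $G$.

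With this identification in hand, the two directions follow formally. First I would observe that by definition $K_0\supseteq F$ is core-free exactly when $\bigcap_{g\in G}gHg^{-1}=\{1\}$, which by the previous paragraph says $\Gal\bigl(K\,\big|\,\prod_\sigma\sigma(K_0)\bigr)=\{1\}$, i.e., $\prod_\sigma\sigma(K_0)=K$ by the Galois correspondence. Since $\prod_\sigma\sigma(K_0)$ is the Galois closure of $K_0$ over $F$, this is equivalent to $K$ being that Galois closure. Conversely, if $K$ is the Galois closure of $K_0$ over $F$, then the compositum of the conjugates of $K_0$ equals $K$, so the corresponding intersection of subgroups is trivial, i.e., $H$ is core-free.

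I do not expect any serious obstacle here: the content is entirely the standard dictionary between subfields and subgroups. The one point that deserves care is the claim that the conjugates $gHg^{-1}$ correspond to the fields $\sigma(K_0)$ and that their compositum is the Galois closure; this uses that $G$ acts transitively on the $F$-embeddings of $K_0$ into $K$ (as $K/F$ is Galois and contains $K_0$) together with the fact that a subfield of $K$ is Galois over $F$ iff it is $G$-stable. Everything else is bookkeeping, so the proof will be short.
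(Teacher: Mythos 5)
Your argument is correct and fills in exactly the standard Galois-theoretic dictionary that the paper invokes by writing its proof as a single word, ``Immediate.'' You correctly identify that conjugates of $H=\Gal(K\,|\,K_0)$ correspond to conjugate subfields $\sigma(K_0)$, that their intersection fixes the compositum of those subfields, and that this compositum is the Galois closure of $K_0$ in $K$; nothing further is needed.
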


\begin{proof}
Immediate.
\end{proof}

If $K \supseteq K_0 \supseteq F$ is a core-free subextension of $K \supseteq F$ with $K_0=F(\alpha)$, then by definition the action of $\Gal(K \,|\, F)$ on the conjugates of $\alpha$ defines a faithful permutation representation,
equivalent to its action on the left cosets of $\Gal(K\,|\,K_0)$.

We slightly augment the notion of core-free subextension for two-step extensions of fields, as follows.

\begin{defn}
Let
\begin{equation} \label{eqn:VEGseq}
1 \to V \to E \xrightarrow{\pi} G \to 1 
\end{equation}
be an exact sequence of finite groups.  A core-free subgroup $D \leq E$ is \defi{exact} (\defi{relative to} \eqref{eqn:VEGseq}) if $\pi(D)$ is a core-free subgroup of $G$.
\end{defn}

If $D \leq E$ is an exact core-free subgroup we let $H \colonequals \pi(D)$ and $W \colonequals V \cap D = \ker \pi|_D$, so there is an exact subsequence
\begin{equation} \label{eqn:WDHres}
1 \to W \to D \xrightarrow{\pi} H \to 1 
\end{equation}
with both $D \leq E$ and $H \leq G$ core-free.  (We do not assume that $W \leq V$ is core-free.)

Now let $L \supseteq K \supseteq F$ be a two-step Galois extension with $V \colonequals \Gal(L \,|\, K)$, $E \colonequals \Gal(L\,|\,F)$, $G \colonequals \Gal(K\,|\, F)$ and $\pi:E\to G$ the restriction, so we have an exact sequence as in \eqref{eqn:VEGseq}.

\begin{defn} \label{def:corefreesubexte}
We say $L_0 \supseteq K_0 \supseteq F$ is an \defi{exact core-free subextension} of $L \supseteq K \supseteq F$ if 
$L_0=L^{D}$ and $K_0=K^{\pi(D)}$ where $D \leq E$ is an exact core-free subgroup.
\end{defn}

Let $L_0 \supseteq K_0 \supseteq F$ be an exact core-free subextension of $L \supseteq K \supseteq F$,
so that $\Gal(L\,|\,L_0)=D$.
As above we let $H\colonequals\pi(D)=\Gal(K\,|\,K_0)$
and $W\colonequals V\cap D=\Gal(L\,|\,KL_0)$.
By \eqref{eqn:WDHres} we have
$H \simeq D/W = \Gal(KL_0\,|\,L_0)$,
and we have the following field diagram:
\begin{equation} \label{eqn:fielddiagram}
\begin{minipage}{\textwidth}
\xymatrix{
& L \ar@{-}[ddl]_{D} \ar@{-}[dr]_{W} \ar@{-}@/^4pc/[ddr]^{V} \\
& & KL_0 \\
L_0 \ar@{-}[d] \ar@{-}[urr]^{H} & & K \ar@{-}[u] \ar@{-}[ddl]^{G} \\
K_0 \ar@{-}[dr] \ar@{-}[urr]^{H} \\
& F
} 
\end{minipage}
\end{equation}
By Lemma \ref{lem:iscorefree}, $L$ is the Galois closure of $L_0$ over $F$, and $K$ is the Galois closure of $K_0$ over $F$.  We read the diagram \eqref{eqn:fielddiagram} as giving us a way to reduce the Galois theory of the extension $L \supseteq K \supseteq F$ to $L_0 \supseteq K_0 \supseteq F$: the larger we can make $D$, the smaller the extension $L_0 \supseteq K_0 \supseteq F$, and the better for working explicitly with the corresponding Galois groups.

\subsection{Application to Faltings--Serre} \label{sec:appfaltserre}

We now specialize the preceding discussion to our case of interest; although working with core-free extensions does not improve the theoretical understanding, it is a crucial simplification in practice.  

In Steps 2--3 of Algorithm \ref{alg:detrep}, we are asked to enumerate obstructing pairs $(L,\varphi)$ extending $(K,\overline{\rho})$, with $\varphi\colon \Gal(L\,|\,F) \hookrightarrow (\Lie(\redG) \rtimes\,\redG)(\F_\ell)$.  

Let $G \colonequals \img \overline{\rho} \leq \redG(\F_\ell)$.  Given $(L,\varphi)$, the image of $\varphi$ is a subgroup $E \leq \Lie(\redG)(\F_\ell) \rtimes G$ with $\pi(E)=G$; letting $V \colonequals \Lie(\redG)(\F_\ell) \cap E$ we have an exact sequence
\begin{equation} \label{eqn:1VEG1}
1 \to V \to E \xrightarrow{\pi} G \to 1 
\end{equation}
arising from \eqref{eqn:LieGG}.

So we enumerate the subgroups $E \leq \Lie(\redG)(\F_\ell) \rtimes G$ with $\pi(E)=G$, up to conjugation by $\M_n(\F_\ell) \rtimes G$.
The enumeration of these subgroups depends only on $G$, so it may be done as a precomputation step, independent of the representations. 

For each such $E$, let $D$ be an exact core-free subgroup relative to \eqref{eqn:1VEG1}. We let $L_0=L^D$ and $K_0=K^{\pi(D)}$, hence
$L_0\supseteq K_0\supseteq F$ is an exact core-free subextension
of $L\supseteq K\supseteq F$ and we have the field diagram~\eqref{eqn:fielddiagram} where $H=\pi(D)$ and $W=V\cap D$ as before.
Since $V$ is abelian, $KL_0 \supseteq K$ is Galois and hence $L_0 \supseteq K_0$ is also Galois, with common abelian Galois group $\Gal(L_0\,|\,K_0)\simeq\Gal(KL_0 \,|\, K) \simeq V/W$.  So better than a Hunter search as in Lemma \ref{lem:enumerateallfields}, we can use algorithmic class field theory (see Cohen \cite[Chapter 4]{Co2}) to enumerate the possible fields $L_0 \supseteq K_0$.  

Accordingly, we modify Steps 2--3 of Algorithm \ref{alg:detrep} then as follows.

\begin{enumerate}
\item[2${}^{\prime}$.]
Enumerate the subgroups $E \leq \Lie(\redG)(\F_\ell) \rtimes G$ with $\pi(E)=G$,
up to conjugation by $\M_n(\F_\ell) \rtimes G$,
such that $\utr(E) \not\equiv 0 \psmod{\ell}$.
For each such subgroup $E$, perform the following steps.
\begin{enumerate}
\item[a.] Compute a set of representatives $\xi$ of (outer) automorphisms of $E$ such that $\xi$ acts by an inner automorphism on $G$, modulo inner automorphisms by elements of $\M_n(\F_\ell) \rtimes G$.  
\item[b.] Find an exact core-free subgroup $D \leq E$ and let $W,H$ be as in \eqref{eqn:WDHres}.
\item[c.] Let $K_0=K^H$ and use algorithmic class field theory to enumerate all possible extensions $L_0 \supseteq K_0$ unramified away from $S$ such that $\Gal(L_0\,|\,K_0) \simeq V/W$.
\end{enumerate}
\item[3${}^{\prime}$.] For each extension $L_0$ from Step 2${}^{\prime}$c and for each $E$, perform the following steps.
\begin{enumerate}
\item[a.] Compute an isomorphism of groups $\varphi_0\colon \Gal(L\,|\,F) \xrightarrow{\sim} E$ extending $\overline{\rho}$; if no such isomorphism exists, proceed to the next group $E$.
\item[b.] Looping over $\xi$ computed in Step 2${}^{\prime}$a, let $\varphi \colonequals \xi \circ \varphi_0$, and record the pair $(L,\varphi)$.
\end{enumerate}
\end{enumerate}

\begin{proof}[Proof of equivalence with Steps \textup{2}--\textup{3}]
We show that these steps enumerate all obstructing pairs $(L,\varphi)$ up to equivalence.  

Let $L$ be an obstructing extension.  For an obstructing extension $\varphi$ of $\overline{\rho}$, the image $E=\img \varphi$ arises up to conjugation in the list computed in Step 2${}^{\prime}$; such conjugation gives an equivalent representation.  So we may restrict our attention to the set $\Phi$ of obstructing extensions $\varphi$ whose image is \emph{equal} to $E$.

With respect to the core-free subgroup $D$, the field $L$ arises as the Galois closure of the field $L_0=L^{D}$, and so $L_0$ will appear in the list computed in Step 2${}^\prime$c.  An exact core-free subgroup always exists as we can always take $D$ the trivial group. 

In Step 3${}^{\prime}$a, we compute one obstructing extension $\varphi_0 \in \Phi$.  Any other obstructing extension $\varphi \in \Phi$ is of the form $\varphi=\xi \circ \varphi_0$ where $\xi$ is an automorphism of $E$ that induces an inner automorphism on $G$; when $\xi$ arises from conjugation by an element of $\Lie(\redG)(\F_\ell) \rtimes G$, we obtain a representation equivalent to $\varphi_0$, so the representatives $\xi$ computed in Step 2${}^{\prime}$a cover all possible extensions $\varphi$ up to equivalence.
\end{proof}

We now explain in a bit more detail Steps 2${}^{\prime}$a and 3${}^{\prime}$a---in these steps, we need to understand how $\Gal(L\,|\,F)$ restricts to $\Gal(K\,|\,F)$ via its permutation representation.  The simplest thing to do is just to ignore the conditions on $\xi$, i.e., in Step 2${}^{\prime}$a allow all outer automorphisms and in Step 3${}^{\prime}$a take any isomorphism of groups: \emph{a fortiori}, we will still encounter every one satisfying the extra constraint.  To nail it down precisely, we compute the group $\Aut(L_0\,|\,F)$ of $F$-automorphisms of the field $L_0$, for each automorphism $\tau$ of order $2$ compute the fixed field, until we find a field isomorphic to $K_0$: then $\Gal(K\,|\,F)$ is the stabilizer of $\{\beta,\tau(\beta)\}$, and so we can look up the indices of these roots in the permutation representation of $\Gal(L\,|\,F)$.  

In the above, we may also use $\Lie^0(\redG)$ in place of $\Lie(\redG)$ if we are also given $\det\rho_1=\det \rho_2$, by the discussion at the end of section \ref{sec:FSdefo}.

\subsection{Computing conjugacy classes, in stages} \label{sec:instages}

We now discuss Step 4 of Algorithm \ref{alg:detrep}, where we are given $(L,\varphi)$ and we are asked to find a witness prime.  In theory, to accomplish this task we compute the conjugacy class of $\Frob_\frakp$ in $\Gal(L\,|\,K)$ using an algorithm of Dokchitser--Dokchitser \cite{DD-frob} and then calculate $\utr \varphi(\sigma)$ for any $\sigma$ in this conjugacy class.  

In practice, because of the enormity of the computation, we may not want to spend time computing the conjugacy class if we can get away with less.  In particular, we would like to minimize the amount of work done per field.  So we now describe in stages ways to find obstructing primes; each stage gives correct output, but in refining the previous stage we may be able to find smaller primes.  Each of these stages involves a precomputation step that only depends of the group-theoretic data.

In Step 2${}^{\prime}$ above, we enumerate subgroups $E$ and identify an exact core-free subgroup $D$.  We identify $E$ with the permutation representation on the cosets $E/D$.  

In Step 3${}^{\prime}$ above, we see the extension $L \supseteq K \supseteq F$ via a core-free extension $L_0 \supseteq K_0 \supseteq F$, and these fields are encoded by minimal polynomials of primitive elements.  We may compute $\Gal(L\,|\,F)$ as a permutation group with respect to some numbering of the roots, and then insist that the isomorphism $\varphi_0\colon \Gal(L\,|\,F) \xrightarrow{\sim} E$ computed in Step 3${}^{\prime}$a is an isomorphism of permutation representations.  

For $\frakp \not\in S$, for the conjugacy class $\Frob_\frakp$, the cycle type $c(\Frob_\frakp,L_0)$ can be computed very quickly by factoring the minimal polynomial of $L_0$ modulo a power $\frakp^k$ where it is separable (often but not always $k=1$ suffices).  This cycle type may not uniquely identify the conjugacy class, but we can try to find a cycle type which is \emph{guaranteed} to be obstructing as follows.  

\begin{enumerate}
\item[4${}^{\prime}$.]
Perform the following steps.
\begin{enumerate}
\item[a.] For each group $E$ computed in Step 2${}^\prime$ with core-free subgroup $D$, identify $E$ with the permutation representation on the cosets $E/D$.  For each $\xi$ computed in Step 2${}^\prime$a for $E$, compute the set of cycle types 
\begin{align*} 
&\Obc(E,\xi) \colonequals \{c(\xi(\gamma)) : \gamma \in E \text{ and}\utr \gamma \not \equiv 0 \psmod{\ell}\}\\ 
&\qquad\qquad\qquad\qquad\qquad\qquad  \smallsetminus 
\{c(\xi(\gamma)) : \gamma \in E \text{ and}\utr \gamma \equiv 0 \psmod{\ell}\}. 
\end{align*}
\item[b.] For each field $(L,\varphi)$, with $L$ encoded by the core-free subfield $L_0$ and $\varphi \leftrightarrow \xi$ as computed in Step 3${}^\prime$b, find a prime $\frakp$ such that $c(\Frob_\frakp,L_0) \in \Obc(E,\xi)$.
\end{enumerate}
\end{enumerate}

In computing $\Obc(E,\xi)$, of course it suffices to restrict to $\gamma$ in a set of conjugacy classes for $E$.

Step 4${}^{\prime}$ gives correct output because the set of cycle types in $\Obc(E,\xi)$ are precisely those for which \emph{every} conjugacy class in $E$ with the given cycle type is obstructing.  It is the simplest version, and it is the quickest to compute provided that $\Obc(E,\xi)$ is nonempty.

\begin{rmk}
In Step 4${}^{\prime}$a, there may be a cycle type which arises in two ways, from $\gamma,\gamma' \in E$, with $\utr \gamma \not \equiv 0 \psmod{\ell}$ and $\utr \gamma' \equiv 0 \psmod{\ell}$; such a cycle type is not guaranteed to be obstructing.
\end{rmk}

\begin{rmk} \label{rmk:restricttoallxi}
In a situation where there are many outer automorphisms $\xi$ to consider, it may be more efficient (but give potentially larger primes and possibly fail more often) to work with the set
\begin{equation} 
\Obc(E) \colonequals \bigcap_{\xi} \Obc(E,\xi) 
\end{equation}
consisting of cycle types with the property that every conjugacy class in $E$ under \emph{every} outer automorphism $\xi$ is obstructing.  In this setting, in Step 4${}^{\prime}$b, we can loop over just the fields $L$ and look for $\frakp$ with $c(\Frob_\frakp) \in \Obc(E)$.  
\end{rmk}

In the next stage, we seek to combine also cycle type information from $\Gal(K\,|\,F)$, arising as a permutation group from the field $K_0$.  Via the isomorphism $\varphi\colon \Gal(L \,|\, F) \xrightarrow{\sim} E$ and the construction of the core-free extension, as a permutation group $\Gal(L\,|\,F)$ is isomorphic to the permutation representation of $E$ on the cosets of $D$.  (The numbering might be different, but there is a renumbering for which the representations are equal.)  In the same way, the group $\Gal(K \,|\, F)$ is isomorphic as a permutation group to the permutation representation of $\pi(E) = G$ on the cosets of the subgroup $\pi(D)=H$, where $\pi\colon E \to G$ is the projection.  So we have the following second stage.

\begin{enumerate}
\item[4${}^{\prime\prime}$.]
Perform the following steps.
\begin{enumerate}
\item[a.] For each group $E$ computed in Step 2${}^\prime$ and each $\xi$ computed in Step 2${}^\prime$a for $E$, compute the set of pairs of cycle types 
\begin{align*} 
&\Obc(E,G,\xi) \colonequals \{(c(\xi(\gamma)),c(\pi(\gamma))) : \gamma \in E \text{ and }\utr \gamma \not \equiv 0 \psmod{\ell}\}\\ 
&\qquad\qquad\qquad\qquad\qquad\qquad  \smallsetminus 
\{(c(\xi(\gamma)),c(\pi(\gamma))) : \gamma \in E \text{ and }\utr \gamma \equiv 0 \psmod{\ell}\}. 
\end{align*}
\item[b.] For each field $(L,\varphi)$, with $L$ encoded by $L_0$ and $\varphi \leftrightarrow \xi$, find a prime $\frakp$ such that 
\[ (c(\Frob_\frakp,L_0),c(\Frob_\frakp,K_0)) \in \Obc(E,G,\xi). \]
\end{enumerate}
\end{enumerate}

Step 4${}^{\prime\prime}$ works for the same reason as in Step 4${}^\prime$: the cycle type pairs in $\Obc(E,G,\xi)$ are precisely those for which every conjugacy class in $E$ with the given pair of cycle types is obstructing.  The precomputation is a bit more involved in this case, but the check for each field is still extremely fast.

\begin{rmk}
Instead of the cycle type, a weaker alternative to Step 4${}^{\prime\prime}$ would be to record the order of $\Frob_\frakp \in \Gal(K\,|\,F)$.  
\end{rmk}

\begin{rmk}
Assuming that $\tr \overline{\rho}(\Frob_p)$ can be computed efficiently, one additional piece of data that may be appended to the pair of cycle types is $\tr \overline{\rho}(\gamma)$.
\end{rmk}

\begin{rmk}
If $L$ arises from several different choices of core-free subgroup, then these subgroups give different (but conjugate) fields $L_0$.  Because we are not directly accessing the conjugacy class above, but only cycle type information, it is possible that replacing $L_0$ by a conjugate field will give smaller witnesses.  In other words, in Step 4${}^{\prime}$b or $4{}^{\prime\prime}$b above, we could loop over the core-free subgroups $D$ and take the smallest witness among them.  
\end{rmk}

Finally, we may go all the way and compute conjugacy classes.  Write $[\gamma]_E$ for the conjugacy class of a group element $\gamma \in E$.

\begin{enumerate}
\item[4${}^{\prime\prime\prime}$.]
Perform the following steps.
\begin{enumerate}
\item[a.] For each group $E$ computed in Step 2${}^\prime$ and each $\xi$ computed in Step 2${}^\prime$a for $E$, compute the set of obstructing conjugacy classes 
\[ \Ob(E,\xi) \colonequals \{[\gamma]_E : \gamma \in E \text{ and }\utr \gamma \not \equiv 0 \psmod{\ell}\} \]
\item[b.] For each field $(L,\varphi)$, with $L$ encoded by $L_0$ and $\varphi \leftrightarrow \xi$, find a prime $\frakp$ such that $\Frob_\frakp \in \Ob(E,G,\xi)$.
\end{enumerate}
\end{enumerate}

We now explain some examples in detail which show the difference between these stages.

\begin{example}
Anticipating one of our three core cases, we consider $\redG = \GSp_4$ and $\ell=2$ over $F=\Q$.  (The reader may wish to skip ahead and read sections \ref{sec:galoisreps}--\ref{sec:GSp4} to read the details of the setup, but this example is still reasonably self-contained.)  We consider the case of a residual representation with image $G=S_5(b) \leq \GSp_4(\F_2)$ (see \eqref{table:subgroups}), and then a subgroup $E \leq \Liesp_4 \rtimes G$ with $\dim_{\F_2} V=10$.  We find a core-free subgroup $D$ where $\#H=10$ and $[V:W]=2$.  

We compute in Step 2${}^{\prime}$a that we need to consider $8$ automorphisms $\xi$, giving rise to $8$ homomorphisms $\varphi$.  With respect to one such $\xi$, we find that there are $48$ conjugacy classes that are obstructing.  Among these, computing as in Step 4${}^{\prime}$a, we find that $17$ are recognized by their $L_0$-cycle type:
\begin{equation} \label{eqn:ObcVrGxi}
\begin{aligned}
\Obc(E;\xi) &= \{3^6 2^1, 4^1 2^4 1^8, 4^1 2^5 1^6, 4^3 1^8, 4^3 2^1 1^6, 6^1 3^4 2^1, \\
&\qquad\qquad 8^1 4^2 2^2, 8^1 4^3, 10^2, 12^1 3^2 2^1, 12^1 6^1 2^1 \}. 
\end{aligned}
\end{equation}
If instead we call Step 4${}^{\prime\prime}$a, we find that $35=\#\Obc(E,G,\xi)$ are recognized by the pair of $L_0,K_0$-cycle types (and $22$ recognized by $L_0$-cycle type and $K_0$-order).  This leaves $13$ conjugacy classes that cannot be recognized purely by cycle type considerations, for which Step 4${}^{\prime\prime\prime}$ would be required.  

For the other choices of $\xi$, we obtain similar numbers but different cycle types.  If we restrict to just $L_0$-cycle types that work for \emph{all} such as in Remark \ref{rmk:restricttoallxi}, we are reduced to a set of $8$:
\begin{equation} \label{eqn:obcvVgexma} 
\Obc(E) = \{4^1 2^4 1^8, 4^1 2^5 1^6, 4^3 1^8, 4^3 2^1 1^6, 6^1 3^4 2^1, 8^1 4^2 2^2, 8^1 4^3, 10^2 \}. 
\end{equation}

To see how this plays out with respect to the sizes of primes, we work with the field $K$ arising as the Galois closure of $K_0=K^H$ defined by a root of the polynomial
\[ x^{10} + 3x^9 + x^8 - 10x^7 - 17x^6 - 7x^5 + 11x^4 + 18x^3 + 13x^2 + 5x + 1 \]
and similarly $L_0=L^{D}$ by a root of 
\[ x^{20} + 3x^{18} + 5x^{16} + 2x^{14} - 5x^{12} - 13x^{10} - 13x^8 - 6x^6 + x^4 + x^2 - 1. \]
If we restrict to the cycle types in \eqref{eqn:ObcVrGxi} (or \eqref{eqn:obcvVgexma}), we obtain the multiset of witnesses $\{5,5,5,5,23,23,29,29\}$.  If we work with $\Obc(E,G,\xi)$, we find $\{5,5,5,5,19,19,23,23\}$ instead; the difference is two cases where the witness $p=29$ is replaced by $p=19$, so we dig a bit deeper into one of these two cases.

In $L_0$, the factorization pattern of $19$ is $6^2 3^2 2^1$.  But apparently we cannot be guaranteed to have $\utr(\Frob_p) \equiv 1 \pmod{2}$ just looking at cycle type.  Indeed, there are three conjugacy classes with this cycle type: one of order $1280$ and two of order $2560$, represented by the permutations
\begin{align*}
&(1\, 9\, 18)(2\, 15\, 6\, 12\, 5\, 16)(3\, 20\, 7\, 13\, 10\, 17)(4\, 14)(8\, 11\, 19), \\
&(1\, 19\, 8\, 11\, 9\, 18)(2\, 15\, 6\, 12\, 5\, 16)(3\, 20\, 17)(4\, 14)(7\, 13\, 10), \\
&(1\, 10\, 2\, 3\, 8\, 4)(5\, 9\, 6)(7\, 17)(11\, 20\, 12\, 13\, 18\, 14)(15\, 19\, 16) 
\end{align*}
in $S_{20}$ mapping respectively to the matrices
\[ 
\left(\begin{smallmatrix}
1 & 0 & 1 & 1 & 0 & 0 & 0 & 0 \\
0 & 1 & 1 & 1 & 0 & 0 & 0 & 0 \\
0 & 1 & 0 & 1 & 0 & 1 & 1 & 1 \\
0 & 0 & 0 & 1 & 0 & 0 & 0 & 0 \\
0 & 0 & 0 & 0 & 1 & 0 & 1 & 1 \\
0 & 0 & 0 & 0 & 0 & 1 & 1 & 1 \\
0 & 0 & 0 & 0 & 0 & 1 & 0 & 1 \\
0 & 0 & 0 & 0 & 0 & 0 & 0 & 1
\end{smallmatrix}\right),
\left(\begin{smallmatrix}
1 & 0 & 1 & 1 & 0 & 0 & 0 & 0 \\
0 & 1 & 1 & 1 & 0 & 0 & 0 & 0 \\
0 & 1 & 0 & 1 & 0 & 0 & 0 & 0 \\
0 & 0 & 0 & 1 & 1 & 0 & 1 & 1 \\
0 & 0 & 0 & 0 & 1 & 0 & 1 & 1 \\
0 & 0 & 0 & 0 & 0 & 1 & 1 & 1 \\
0 & 0 & 0 & 0 & 0 & 1 & 0 & 1 \\
0 & 0 & 0 & 0 & 0 & 0 & 0 & 1 
\end{smallmatrix}\right),
\left(\begin{smallmatrix}
1 & 0 & 1 & 0 & 1 & 0 & 1 & 1 \\
1 & 0 & 0 & 0 & 1 & 0 & 1 & 1 \\
0 & 1 & 0 & 1 & 0 & 0 & 0 & 0 \\
0 & 1 & 0 & 0 & 1 & 0 & 1 & 1 \\
0 & 0 & 0 & 0 & 1 & 0 & 1 & 0 \\
0 & 0 & 0 & 0 & 1 & 0 & 0 & 0 \\
0 & 0 & 0 & 0 & 0 & 1 & 0 & 1 \\
0 & 0 & 0 & 0 & 0 & 1 & 0 & 0 \\
\end{smallmatrix}\right). \]
So precisely the first two conjugacy classes have upper trace $1$ and are obstructing, whereas the third has upper trace $0$ and is not obstructing.  So by cycle types in $L_0$ alone, indeed, we cannot proceed.

But we recover using the $K_0$-cycle type.  For the obstructing classes, the cycle type in the permutation representation of $G$ is $3^3 1^1$, whereas for the nonobstructing class the cycle type is $6^1 3^1 1^1$.  We compute that the factorization pattern for $19$ in $K_0$ is type $3^3 1^1$, which means $19$ belongs to an obstructing class.  
If we go all the way to the end, we can compute that the conjugacy class of $\Frob_{19}$ in fact belongs to the second case.
\end{example}

\section[Galois representations]{Abelian surfaces, paramodular forms, and Galois representations} \label{sec:galoisreps}

We pause now to set up notation and input from the theory of abelian surfaces, paramodular forms, and Galois representations in our case of interest. 

\subsection{Galois representations from abelian surfaces}

Let $A$ be a polarized abelian variety over $\Q$.  For example, if $X$ is a nice (smooth, projective, geometrically integral) genus $g$ curve over $\Q$, then its Jacobian $\Jac X$ with its canonical principal polarization is a principally polarized abelian variety over $\Q$ of dimension $g$.  Let $N \colonequals \cond(A)$ be the conductor of $A$.  We say $A$ is \defi{typical} if $\End(\Abar)=\Z$, where $\Abar \colonequals A_{\Qbar}$ is the base change of $A$ to $\Qbar$.  

\begin{lem} \label{lem:nonsquare}
Let $A$ be a simple, semistable abelian surface over $\Q$ with
nonsquare conductor. Then $A$ is typical.
\end{lem}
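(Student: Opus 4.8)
The plan is to rule out each way that an abelian surface could acquire extra endomorphisms over $\Qbar$, using the hypotheses of simplicity, semistability, and nonsquare conductor in turn. Recall that for a simple abelian surface $A/\Q$, the endomorphism algebra $\End(A_{\Qbar}) \otimes \Q$ is one of: $\Q$ (the typical case), a real quadratic field, a CM quartic field, an indefinite quaternion algebra over $\Q$, or a definite quaternion algebra (the last does not occur for surfaces over number fields by Albert's classification in the principally polarizable setting, but I would not rely on that and instead argue directly). So the task is to show that under our hypotheses, $\End(A_{\Qbar})\otimes\Q = \Q$.

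First I would invoke semistability: if $A$ is semistable over $\Q$, then it has semistable reduction everywhere, and this property is inherited by $A_{\Qbar}$ and by any abelian variety isogenous to a power of $A$ over $\Qbar$. The key point is that an abelian surface with everywhere semistable reduction (equivalently, with unipotent inertia action on the Tate module at every prime) cannot have potential CM by a quartic field, nor potential quaternionic multiplication: CM abelian varieties have potentially good reduction everywhere, so a semistable CM abelian surface would have good reduction everywhere, which is impossible over $\Q$ by Fontaine's theorem (there is no abelian variety over $\Q$ with good reduction everywhere); and quaternionic multiplication forces the surface to be geometrically isogenous to $E^2$ for a CM or non-CM elliptic curve $E$ — if $E$ has CM we again get everywhere potentially good reduction and contradict Fontaine, and if $E$ is non-CM then $\End(A_{\Qbar})\otimes\Q$ being a quaternion algebra forces $\End(E_{\Qbar})\otimes\Q$ to be larger than $\Q$, a contradiction. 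This should leave only the real multiplication (RM) case, i.e. $\End(A_{\Qbar})\otimes\Q$ a real quadratic field $F$.

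Then I would use the nonsquare conductor hypothesis to eliminate RM. If $A$ has RM by a real quadratic field $F$ defined over $\Qbar$, let $K/\Q$ be the (at most quadratic) field over which all these endomorphisms are defined. If $K=\Q$, then $A$ is of $\GL_2$-type over $\Q$ and is associated to a Hilbert modular form, whose conductor makes $\cond(A)$ a square (it is the norm from $F$ of the level, times a power accounting for ramification — more precisely, $A$ being $\F$-rational forces the $\ell$-adic representation to be induced/be two-dimensional over $F\otimes\Q_\ell$, so the conductor is a norm and hence a square). If $K$ is a genuine quadratic field, then $A$ is a $\Q$-form of a $\GL_2$-type variety and one shows (this is where the semistability interacts: semistability rules out wild ramification in $K/\Q$) that again the conductor is forced to be a square, or else one gets a contradiction with simplicity over $\Q$ via the restriction-of-scalars / base-change picture. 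In all subcases RM forces $\cond(A)$ to be a perfect square, contradicting the hypothesis. Hence $\End(A_{\Qbar})\otimes\Q=\Q$, and since $\End(A_{\Qbar})$ is an order in it, $\End(A_{\Qbar})=\Z$, so $A$ is typical.

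The main obstacle I expect is the RM case with endomorphisms defined over a quadratic field $K\ne\Q$: one must show carefully that the conductor of $A$ over $\Q$ is a square. The cleanest route is to pass to $B=\mathrm{Res}_{K/\Q}(A_K)$ or to exploit that $A_K$ is $\GL_2$-type over $K$ with an associated Hilbert modular form over $K$, track how the conductor of $A$ relates to the norm of the level down to $\Q$, and use semistability to control the ramification at primes dividing $\disc(K/\Q)$ and the primes of bad reduction — this bookkeeping is the part that requires genuine care rather than invocation of a black box. It may be cleanest to cite the structure theory for abelian surfaces with potential RM (e.g. from the work of Booker–Sijsling–Sutherland–Voight–Yasaki cited earlier, or from the general theory underlying the paramodular conjecture) to package the conductor computation, but the conceptual content is exactly the square-conductor phenomenon for $\GL_2$-type-up-to-twist surfaces.
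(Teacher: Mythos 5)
There is a genuine gap, and in fact several. Your opening taxonomy — ``for a simple abelian surface $A/\Q$, the endomorphism algebra $\End(A_{\Qbar})\otimes\Q$ is one of $\Q$, a real quadratic field, a CM quartic field, an indefinite quaternion algebra, $\dots$'' — is the classification for a \emph{geometrically} simple abelian surface. Under your hypotheses you only know $A$ is simple over $\Q$; if $A_{\Qbar}$ decomposes (for instance $A_{\Qbar}\sim E\times E'$ with $E,E'$ conjugate elliptic curves over a quadratic field, or $A_{\Qbar}\sim E^2$), then $\End(A_{\Qbar})\otimes\Q$ is something like $\Q\times\Q$, $K\times K$, $M_2(\Q)$, or $M_2(K)$, none of which appears on your list. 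So the case analysis never addresses the geometrically split possibilities, and the argument does not close. Separately, the step ``quaternionic multiplication forces the surface to be geometrically isogenous to $E^2$'' is false: when the quaternion algebra is a division algebra (nonsplit), a QM abelian surface is geometrically simple; these surfaces live on compact Shimura curves and are not products of elliptic curves. The observation one actually wants for that case is that a nonsplit quaternion algebra cannot embed in $\M_2(\Q)=\End(\Gm^2)\otimes\Q$, so such an $A$ has no multiplicative reduction and hence potential good reduction everywhere, which with semistability and Fontaine gives a contradiction — but you would still need to justify this, and you would still be left with the split case and the geometrically nonsimple cases. Finally, the RM-over-a-quadratic-field case is openly unfinished in your writeup.

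The paper avoids all of this by appealing to Ribet's theorem for semistable abelian varieties (Ribet, \emph{Endomorphisms of semi-stable abelian varieties over number fields}, Corollary 1.4): if $A/\Q$ is semistable, then \emph{every} endomorphism of $A_{\Qbar}$ is already defined over $\Q$, so $\End(A_{\Qbar})=\End(A)$. This collapses the entire problem to understanding $\End(A)$, where Albert's classification gives only two possibilities ($\Z$ or an order in a real quadratic field), and the real quadratic case is excluded directly by the conductor formula (Brumer--Kramer, Lemma 3.2.9), which forces $\cond(A)$ to be a square. You should replace your geometric case analysis with this two-step argument: Ribet's semistability result is precisely the tool that makes the ``potential'' extra endomorphisms disappear and lets the nonsquare-conductor hypothesis do all the remaining work.
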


\begin{proof}
By Albert's classification, either $\End(A) =\Z$ or $\End(A)$ is an
order in a quadratic field.  In the latter case, $\cond(A)$ is a
square by the conductor formula (see Brumer--Kramer \cite[Lemma
3.2.9]{BK14}), a contradiction.  Therefore $\End(A)=\Z$.  Since $A$ is
semistable, all endomorphisms of $A^{\textup{al}}$ are defined over
$\Q$ by a result of Ribet \cite[Corollary 1.4]{Ribet75}.  Thus
$\End(A^{\textup{al}})=\End(A)=\Z$, and $A$ is typical.
\end{proof}

\begin{lem} \label{lem:squarefree}
An abelian surface over $\Q$ of prime conductor is typical.
\end{lem}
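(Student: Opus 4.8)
The plan is to reduce to Lemma~\ref{lem:nonsquare}: given an abelian surface $A$ over $\Q$ of prime conductor $N=p$, it suffices to check that $A$ is (i) semistable, (ii) simple over $\Q$, and (iii) has nonsquare conductor. Property (iii) is free, since a prime is never a perfect square.

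For (i), I would invoke the local description of the conductor exponent. The only bad prime of $A$ is $p$, and at $p$ one has $f_p(A) = 2u_p + t_p + \delta_p$, where $t_p$ and $u_p$ are the toric and unipotent ranks of the connected component of the special fiber of the N\'eron model of $A$ and $\delta_p\ge 0$ is the wild (Swan) part. Since $f_p(A) = \ord_p(N) = 1$ and all three summands are non-negative integers, necessarily $u_p = \delta_p = 0$ (and $t_p = 1$); this is exactly the condition that $A$ has semistable reduction at $p$. As $A$ has good reduction at every other prime, $A$ is semistable.

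For (ii), suppose for contradiction that $A$ is not simple over $\Q$. Since $\dim A = 2$, it is then isogenous over $\Q$ to a product $E_1 \times E_2$ of elliptic curves over $\Q$. The conductor is invariant under isogeny and multiplicative on products, so $\cond(E_1)\cond(E_2) = p$; hence one factor, say $E_1$, has conductor $1$, i.e.\ good reduction everywhere. But there is no elliptic curve over $\Q$ with everywhere good reduction (classically; equivalently $S_2(\mathrm{SL}_2(\Z)) = 0$), a contradiction. Therefore $A$ is simple over $\Q$.

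With (i)--(iii) established, Lemma~\ref{lem:nonsquare} applies and yields $\End(A_{\Qbar}) = \Z$, i.e.\ $A$ is typical. I expect the only point requiring care to be step (i) --- pinning down the conductor-exponent identity $f_p = 2u_p + t_p + \delta_p$ and its consequence that $f_p = 1$ forces semistability --- while (ii) and (iii) are routine bookkeeping with standard properties of conductors together with the (classical) nonexistence of conductor-one elliptic curves over $\Q$.
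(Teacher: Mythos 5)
Your proof is correct and takes essentially the same route as the paper: reduce to Lemma~\ref{lem:nonsquare} by checking simplicity (via multiplicativity of the conductor plus nonexistence of an everywhere-good-reduction factor) and semistability at $N$. You usefully flesh out the semistability step with the conductor-exponent formula $f_p = 2u_p + t_p + \delta_p$, which the paper leaves implicit, and you invoke only the elliptic-curve case of Fontaine's theorem since the factors of a non-simple abelian surface are necessarily elliptic curves; the one caveat is your parenthetical that this is ``equivalently $S_2(\mathrm{SL}_2(\Z))=0$'' --- that equivalence requires the modularity theorem, so it is best left as the classical result of Tate/Ogg (or Fontaine) rather than as a consequence of $S_2(\mathrm{SL}_2(\Z))=0$.
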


\begin{proof}
If $A$ is not simple over $\Q$, then we have any isogeny $A \sim A_1
\times A_2$ over $\Q$ to the product of abelian varieties $A_1,A_2$
over $\Q$, and $\cond(A)=\cond(A_1)\cond(A_2)$.  But since $A$ is
prime, without loss of generality $\cond(A_1)=1$, contradicting the
result of Fontaine \cite{Fontaine} that there is no abelian variety
over $\Q$ with everywhere good reduction.  Therefore $A$ is simple
over~$\Q$.  Since $N=\cond(A)$ is prime, $A$ is semistable at $N$, and
the result then follows from Lemma \ref{lem:nonsquare}.
\end{proof}

From now on, suppose that $g=2$ and $A$ is a polarized abelian surface over $\Q$.
Let $\ell$ be a prime with $\ell \nmid N$ and $\ell$ coprime to the degree of the polarization on $A$.   Let $S$ be a finite set of places of $\Q$ containing $\ell,\infty$ and the primes of bad reduction of $A$.  
  Let 
\[ \chi_\ell \colon \Gal_{\Q,S} \to \Z_\ell^\times \] 
denote the $\ell$-adic cyclotomic character, so that $\chi_\ell(\Frob_p)=p$.  Then the action of $\Gal_\Q$ on the $\ell$-adic Tate module 
\[ T_{\ell}(A) \colonequals \varprojlim_n A[\ell^n] \simeq \coho^1_{\textup{\'et}}(A,\Z_\ell)\spcheck \simeq \Z_\ell^4 \] 
(where $A[\ell^n]$ denotes the $\ell^n$-torsion of $A$) provides a continuous Galois representation
\begin{equation} \label{eqn:rhoAell}
\rho_{A,\ell}\colon \Gal_{\Q,S} \to \GSp_4(\Z_\ell)
\end{equation}
with determinant $\chi_\ell^2$ and similitude character $\chi_\ell$ that is unramified outside $\ell N$.  We may reduce the representation \eqref{eqn:rhoAell} modulo $\ell$ to obtain a residual representation 
\[ \overline{\rho}_{A,\ell} \colon \Gal_{\Q,S} \to \GSp_4(\F_\ell), \] 
which can be concretely understood via the Galois action on the field $\Q(A[\ell])$.

For a prime $p \neq \ell$, slightly more generally we define
\begin{equation} 
L_p(A,T) \colonequals \det(1-T\Frob_p^*  \mid \coho^1_{\textup{\'et}}(\Abar,\Q_\ell)^{I_p}) 
\end{equation}
where $\Frob_p^*$ is the geometric Frobenius automorphism, $I_p \leq \Gal_{\Q,S}$ is an inertia group at $p$, and the definition is independent of the auxiliary prime $\ell \neq p$ (by the semistable reduction theorem of Grothendieck \cite[Exp.\ IX, Th\'eor\`eme 4.3(b)]{SGA7}).  In particular, when $p \nmid \ell N$, we have 
\begin{equation}
\det(1-\rho_{A,\ell}(\Frob_p)T)=L_p(A,T) = 1-a_p T + b_{p^2} T^2 - p a_p T^3 + p^2 T^4 \in 1 + T \Z[T]. 
\end{equation}
Moreover, if $A=\Jac X$ and $p$ does not divide the minimal discriminant $\Delta$ of $X$, then 
\[ Z(X\bmod p,T) \colonequals \exp\left(\sum_{r=1}^{\infty} \# X(\F_{p^r}) \frac{T^r}{r}\right) = \frac{L_p(A,T)}{(1-T)(1-pT)} \]
so the polynomials $L_p(A,T)$ may be efficiently computed by counting points on $X$ over finite fields.  We define
\begin{equation}
L(A,s) \colonequals \prod_p L_p(A,p^{-s})^{-1};
\end{equation}
this series converges for $s \in \C$ in a right half-plane.

\subsection{Paramodular forms} \label{sec:paramodularforms}

We follow Freitag \cite{Freitag} for the theory of Siegel modular forms.  Let $\Half_2   \subset \M_2(\C)$ be the Siegel upper half-space.  We write matrices $M=\begin{psmallmatrix} A & B \\ C & D \end{psmallmatrix} \in \GL_4(\R)$ as block matrices, in particular $J \colonequals \begin{psmallmatrix} 0 & 1 \\ -1 & 0 \end{psmallmatrix} \in \GL_4(\R)$ as usual.  We write ${}\transpose$ for the transpose.  Define
\[ \GSp_4^{+}(\R) \colonequals \{ M \in \GL_4(\R) : M\transpose JM=\mu J\text{ for some $\mu \in \R_{>0}$}\}. \]
For $M \in \GSp_4^{+}(\R)$, we have $\mu=\det(M)^{1/2}>0$.  

For a holomorphic function $f\colon \Half_2 \to \C$ and $M \in \GSp_4^+(\R)$ and $k \in \Z_{\geq 0}$, we define the classical slash
\begin{equation} 
(f\,|_k\,M)(Z) \colonequals \mu^{2k-3} \det(CZ + D)^{-k} f( (AZ+B)(CZ+D)\inv).
\end{equation}

Let $\Gamma \leq \Sp_4(\R)$ be a subgroup commensurable with $\Sp_4(\Z)$.  We denote by 
\[ M_k(\Gamma) \colonequals \{f \colon \Half_2 \to \C : (f\,|_k\,\gamma)(Z) = f(Z) \text{ for all $\gamma \in \Gamma$}\} \]
the $\C$-vector space of Siegel modular forms with respect to $\Gamma$, and $S_k(\Gamma) \subseteq M_k(\Gamma)$ the subspace of forms vanishing at the cusps of $\Gamma$, called the space of cuspforms.

To each double coset $\Gamma M \Gamma$ with $M \in \GSp_4^{+}(\Q) \colonequals \GSp_4(\Q) \cap \GSp_4^{+}(\R)$, 
we define the \defi{Hecke operator}
\begin{equation} \label{eqn:TMGammadef}
\HeckeT({\Gamma M \Gamma})\colon M_k( \Gamma ) \to M_k( \Gamma ) 
\end{equation}
as follows: from a  decomposition $ \Gamma M \Gamma = \bigsqcup_j \Gamma M_j $ of the double coset  into disjoint single cosets, we define
$ f \,|_k\,  \HeckeT({\Gamma M \Gamma}) = \sum_j f \,|_k\, M_j$.
The action is well-defined, depending only on the double coset, and $\HeckeT({\Gamma M \Gamma})$ maps $S_k(\Gamma)$ to $S_k(\Gamma)$.  

Let $N \in \Z_{\geq 1}$.  The \defi{paramodular group} $K(N)$ of level~$N$ in degree two is 
defined by 
\begin{equation}
K(N) \colonequals
\begin{pmatrix} 
\Z  &  N\Z  &  \Z  &  \Z  \\
\Z  &  \Z  &  \Z  &  N^{-1}\Z  \\
\Z  &  N\Z  &  \Z  &  \Z  \\
N\Z  &  N\Z  &  N\Z  &  \Z  
\end{pmatrix} 
\cap \Sp_4(\Q).
\end{equation}
The paramodular group $K(N)$ has a normalizing \defi{paramodular Fricke involution},      
$\mu_N \in \Sp_4(\R)$, given by 
\[ \mu_N = \begin{pmatrix}(F_N^{-1})\transpose & 0 \\ 0 F_N \end{pmatrix}, \] 
where 
$F_N =\frac{1}{ \sqrt{N} } \mymat{0}{1}{-N}{0}$ is the Fricke involution
for $\Gamma_0(N)$.
Consequently, for all $k$ we may decompose
\begin{equation} \label{eqn:eigenspaceplusminus}
M_k( K(N) ) = M_k(K(N))^{+} \oplus M_k( K(N))^{-} 
\end{equation}
into plus and minus $\mu_N$-eigenspaces.  

Write $e(z)=\exp(2\pi\sqrt{-1} z)$ for $z \in \C$.  The Fourier expansion of $f \in M_k( K(N) )$ is
\begin{equation} \label{eqn:Fourierexppara}
f(Z) = \sum_{T\ge 0} a(T;f)e(\tr(TZ)) 
\end{equation} 
for $Z \in \Half_2$ and the sum over semidefinite matrices
\[ T =\begin{pmatrix} n  &  r/2  \\  r/2  &  Nm \end{pmatrix} \in \M_2^{\textup{sym}}(\Q)_{\ge 0} \text{ with $n,r,m \in \Z$.} \]
  For a subring $R \subseteq \C$, we denote by
 \begin{equation} \label{eqn:MkKNR}
 M_k(K(N),R) \colonequals \{f \in M_k(K(N)) : a(T;f) \in R \text{ for all $T \geq 0$}\} 
 \end{equation}
 the $R$-module of paramodular forms whose 
 Fourier coefficients all lie in~$R$, and similarly we write $S_k(K(N),R)$ for cusp forms and $S_k(K(N),R)^{\pm}$ for the eigenspaces under the Fricke involution.  The ring of paramodular forms with coefficients in $R$ 
 \[ M( K(N),R) \colonequals \bigoplus_{k=0}^{\infty} M_k( K(N),R) \]
 is a graded $R$-algebra.  

For a prime~$p \nmid N$, 
the first (more familiar) Hecke operator we will use is 
\begin{equation} 
T(p) \colonequals \HeckeT({K(N) \diag(1,1,p,p) K(N)}) 
\end{equation}
whose decomposition into left cosets is given by 
\begin{equation} \label{eqn:heckeopersM}
\begin{aligned}
&K(N) \diag(1,1,p,p) K(N) \\
&\qquad = K(N) 
\begin{pmatrix} 
p  &  0  &  0  &  0  \\
0  &  p  &  0  &  0  \\ 
    &         &  1  &  0  \\
    &         &  0  &  1 
\end{pmatrix}
 + \sum_{i \bmod{p}}
 K(N) 
\begin{pmatrix} 
1  &  0  &  i  &  0  \\
0  &  p  &  0  &  0  \\ 
    &         &  p  &  0  \\
    &         &  0  &  1 
\end{pmatrix}  
\\
& \qquad\qquad + \sum_{i,j \bmod{p}} K(N) 
\begin{pmatrix} 
p  &  0  &  0  &  0  \\
i  &  1   &  0  &  j  \\ 
    &         &  1  &  -i  \\
    &         &  0  &  p 
\end{pmatrix}
 + \sum_{i,j,k \bmod{p}}  
 K(N) 
\begin{pmatrix} 
1  &  0  &  i  &  j  \\
0  &  1   &  j  &  k  \\ 
    &         &  p  &  0  \\
    &         &  0  &  p 
\end{pmatrix}
\end{aligned}
\end{equation}
with indices taken over residue classes modulo $p$. 
Writing $T[u]=u\transpose Tu$ for $T,u \in \M_2(\Q)$, the action of $T(p)$ on Fourier coefficients $a(T;f)$ is given by
\begin{equation}
\begin{aligned} \label{eqn:heckeopersFC}
a( T; f\,|_k\, T(p)) &= a( pT;f) 
+ p^{k-2} \sum_{j \text{\rm \ mod}\  p} a\!\left( \frac1p \,T \begin{bmatrix} 1 & 0 \\ j & p \end{bmatrix};f\right)  \\
& \qquad + p^{k-2} a\!\left( \frac1p \,T \begin{bmatrix} p & 0 \\ 0 & 1 \end{bmatrix};f\right)
+ p^{2k-3} a\!\left( \frac1p \,T;f\right).  
\end{aligned}
\end{equation}
Hence for $k \ge 2$, the Hecke operator $T(p)$ stabilizes $S_k(K(N),R)$.  In particular, taking $R=\Z$ we see that if $f$ has integral Fourier coefficients, then $f\,|_k\, T(p)$  
 has integral Fourier coefficients for $k \ge 2$.  

We will also make use of another, perhaps less familiar, Hecke operator.  For $K(N)$ and a prime~$p \nmid N$, we define
\begin{equation} 
T_1(p^2)= \HeckeT({K(N) \diag(1,p,p^2,p) K(N)}). 
\end{equation}

\begin{lem}
The coset decomposition for $T_1(p^2)$ is given by:
\begin{equation} \label{eqn:heckeopersMM}
\begin{aligned}
&K(N) \diag(1,p,p^2,p) K(N) \\
&\qquad = K(N) 
\begin{pmatrix} 
p  &  0  &  0  &  0  \\
0  &  p^2  &  0  &  0  \\ 
    &         &  p  &  0  \\
    &         &  0  &  1 
\end{pmatrix}
 + \sum_{i \bmod{p}}
 K(N) 
\begin{pmatrix} 
p^2  &  0  &  0  &  0  \\
pi  &  p  &  0  &  0  \\ 
    &         &  1  &  -i  \\
    &         &  0  &  p 
\end{pmatrix}  
\\
& \qquad\qquad + \sum_{i \not\equiv 0 \bmod{p}} K(N) 
\begin{pmatrix} 
p  &  0  &  i  &  0  \\
0  &  p   &  0  &  0  \\ 
    &         &  p  & 0  \\
    &         &  0  &  p 
\end{pmatrix}
 + \sum_{\substack{i \bmod{p},\\ j \not\equiv 0 \bmod{p}}}  
 K(N) 
\begin{pmatrix} 
p  &  0  &  i^2j  &  ij  \\
0  &  p   &  ij  &  j  \\ 
    &         &  p  &  0  \\
    &         &  0  &  p 
\end{pmatrix} \\
& \qquad\qquad + \sum_{\substack{i \bmod{p}, \\  j \bmod{p^2}}} K(N) 
\begin{pmatrix} 
1  &  0  &  j  &  i  \\
0  &  p   &  pi  &  0  \\ 
    &         &  p^2  &  0  \\
    &         &  0  &  p 
\end{pmatrix}
 + \sum_{\substack{i,j \bmod{p}, \\  k \bmod{p^2}}}    
 K(N) 
\begin{pmatrix} 
p  &  0  &  0  &  pj  \\
i  &  1   &  j  &  k  \\ 
    &         &  p  &  -pi  \\
    &         &  0  &  p^2 
\end{pmatrix}
\end{aligned}
\end{equation}
\end{lem}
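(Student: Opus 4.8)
\emph{Plan of proof.} The plan is to reduce the statement to a purely local computation at $p$ and then carry out an explicit reduction of the double coset, by the same method that produces the decomposition \eqref{eqn:heckeopersM} for $T_p$.

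\emph{Localization.} Since $\gcd(p,N)=1$, the symplectic $\Z$-lattice preserved by $K(N)$ is self-dual over $\Z_p$, so there is $g \in \GSp_4(\Q_p)$ with $g\,(K(N)\otimes\Z_p)\,g^{-1}=\GSp_4(\Z_p)$. The matrix $\delta \colonequals \diag(1,p,p^2,p)$ lies in $\GSp_4^{+}(\Q)$, has similitude $p^2$ and elementary divisor type $(1,p,p,p^2)$, and it (together with all representatives appearing in \eqref{eqn:heckeopersMM}) is integral; moreover at every finite place $v\neq p$ one checks that $\delta$ already lies in (the $\GSp_4$-saturation of) $K(N)\otimes\Z_v$, while at $v\nmid pN$ the similitude $p^2$ is a unit. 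Hence the natural map from left cosets of $K(N)$ inside $K(N)\delta K(N)$ to left cosets of $K(N)\otimes\Z_p$ inside $(K(N)\otimes\Z_p)\,\delta\,(K(N)\otimes\Z_p)$ is a bijection, and after conjugating by $g$ the problem becomes: decompose $\GSp_4(\Z_p)\,\delta'\,\GSp_4(\Z_p)$ into left cosets of $\GSp_4(\Z_p)$, where $\delta'=g\delta g^{-1}$ has the same invariants; this double coset is determined by those invariants, so it suffices to exhibit a transversal.

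\emph{Explicit reduction.} Viewing a left coset $\GSp_4(\Z_p)M$ as the lattice $M\Z_p^4$ it defines (equivalently, reducing $M$ on the left by $\GSp_4(\Z_p)$), I would put $M$ into a symplectic Hermite normal form: reduce the first block column modulo the $\GSp_4(\Z_p)$-action, then the second, at each stage respecting the constraint that the resulting lattice carries $p^2$ times a unimodular symplectic form and that the global similitude is $p^2$. The possible valuation patterns of the pivots yield exactly the six families in \eqref{eqn:heckeopersMM}; the congruence conditions ($i\not\equiv 0$ or $j\not\equiv 0 \pmod p$, and the mixed moduli modulo $p$ versus $p^2$) are precisely the conditions that make each family a genuine transversal of its stratum, i.e.\ that remove the representatives already $\GSp_4(\Z_p)$-equivalent to one in a more divisible stratum. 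This is a variant of the standard local Hecke computations for genus two (e.g.\ in Andrianov's book, or in Roberts--Schmidt), specialized to this double coset and transported back along $g$.

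\emph{Disjointness and count.} For any two distinct representatives $M,M'$ on the list one checks that $M'M^{-1}\notin K(N)$ (equivalently $M\Z_p^4 \neq M'\Z_p^4$), so the cosets are pairwise distinct; and each $M$ on the list lies in $K(N)\delta K(N)$ by a Smith normal form check that it has similitude $p^2$ and elementary divisors $(1,p,p,p^2)$. Summing the sizes of the six families gives
\[
1 + p + (p-1) + p(p-1) + p^3 + p^4 \;=\; p^4+p^3+p^2+p \;=\; p(p+1)(p^2+1),
\]
which matches $\deg\HeckeT\bigl(K(N)\delta K(N)\bigr)$, computed as the local index $[\GSp_4(\Z_p):\GSp_4(\Z_p)\cap (\delta')^{-1}\GSp_4(\Z_p)\delta']$. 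Together with disjointness and membership in the double coset, this forces the decomposition \eqref{eqn:heckeopersMM}. The main obstacle is none of the individual steps but the combinatorial bookkeeping in the explicit reduction: getting the moduli exactly right (some entries run over residues modulo $p$, others modulo $p^2$) and confirming that the coprimality conditions excise precisely the overlaps between strata, so that the six families form an exact transversal; the closed-form count $p(p+1)(p^2+1)$ is the global consistency check that pins this down.
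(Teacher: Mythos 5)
Your approach is genuinely different from the paper's. The paper's proof simply transcribes the coset representatives from Roberts--Schmidt \cite[(6.6)]{RS07}, recording the explicit matrix manipulations (swap rows and columns $1$ and $2$, invert, rescale by the similitude $p^2$) needed to translate between conventions. You instead propose to rederive the transversal from scratch by a local lattice computation at $p$ in $\GSp_4(\Z_p)$, and then certify it by the three-part check: each listed $M$ has the right elementary divisors $(1,p,p,p^2)$ and similitude $p^2$, the cosets are pairwise distinct, and their number equals $\deg T_1(p^2)$. This is a valid and self-contained strategy, and your count $1+p+(p-1)+p(p-1)+p^3+p^4=p(1+p)(1+p^2)$ is correct and matches the degree stated in \eqref{everything}; that tally is a sanity check the paper's one-line citation does not itself provide. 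What each approach buys: the paper's proof is essentially free but rests on the reader cross-referencing a different normalization in an external source; yours is verifiable in place but still requires actually executing the symplectic Hermite-normal-form stratification (you describe it with ``I would put $M$ into\dots'' rather than carry it out) and the pairwise-disjointness check across six families with mixed moduli ($\bmod\ p$ versus $\bmod\ p^2$) and coprimality constraints. As you yourself flag, the degree count pins the answer down only once membership and disjointness are established; as written the proposal is a sound plan with the global consistency check done, but the stratification and disjointness bookkeeping remain to be filled in.
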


\begin{proof}
The cosets are from Roberts--Schmidt \cite[(6.6)]{RS07} after swapping rows one and two and columns one and two, applying an inverse, and multiplying by the similitude~$p^2$.
\end{proof}

Define the indicator function $\indic(p\mid y)$ by $1$ if $p\mid y$ and by $0$ if $ p \nmid y$.  Then the action of $T_1(p^2)$ on the Fourier coefficients is:
\begin{equation} \label{eqn:heckeopersFCMM}
\begin{aligned} 
a( T; f\,|_k\,T_1(p^2)) &= 
  p^{k-3} \sum_{x \bmod{p}} a( T \begin{bmatrix} 1 & 0 \\ x & p \end{bmatrix};f) 
+ p^{k-3}  a\!\left( T \begin{bmatrix} p & 0 \\ 0 & 1 \end{bmatrix};f\right)  \\
&\qquad + p^{3k-6}   \sum_{j \bmod{p}} a\!\left( \frac1{p^2} \,T \begin{bmatrix} 1 & 0 \\ j & p \end{bmatrix};f\right)
+ p^{3k-6} a\!\left( \frac1{p^2}\,T \begin{bmatrix} p & 0 \\ 0 & 1 \end{bmatrix} ;f\right)  \\
&\qquad + p^{2k-6} \left( p\,\indic\!\left(p \mid T\begin{bmatrix} 1  \\ 0 \end{bmatrix} \right)-1 \right) a( T;f) \\
&\qquad + p^{2k-6} \sum_{\lambda \bmod{p}}\left( p\,\indic\!\left(p \mid T\begin{bmatrix} \lambda  \\ 1 \end{bmatrix} \right)-1 \right)a( T;f).
\end{aligned}
\end{equation}
Hence for $k \ge 3$, the Hecke operator $T_1(p^2)$ stabilizes $S_k(K(N),R)$.  
In particular, if $f$ has integral Fourier coefficients, then $f\,|_k\,T_1(p^2)$  
 has integral Fourier coefficients for $k \ge 3$.  However, for $k=2$, we only 
 know that $p^2 f\,|_k \,T_1(p^2)$  is integral when $f$ is (and there 
 are examples where $f\,|_2\,T_1(p^2)$ has $p^2$ in the denominator of some Fourier coefficients).

Summarizing the above, we have:
\begin{equation}
\begin{aligned} \label{everything}
T(p)   &= \HeckeT(K(N) \diag(1,1,p,p) K(N))  ; \qquad 
\deg T(p)   =  (1+p)(1+p^2) \\ 
T_1(p^2)   &= \HeckeT(K(N) \diag(1,p,p^2,p) K(N))  ; \quad 
\deg T_1(p^2)     =  (1+p)(1+p^2)p.
\end{aligned}
\end{equation}
We define two new operators:
\begin{equation}
\begin{aligned}
T_2(p^2) & \colonequals \HeckeT(K(N) \diag(p,p,p,p) K(N))  = p^{2k-6}\id \\
B(p^2)  &\colonequals p(  T_1(p^2)  + (1+p^2) T_2(p^2))  
\end{aligned}
\end{equation}

If $f$ is an eigenform of weight $k$ for the operators $T(p)$ and $T_1(p^2)$, with corresponding eigenvalues $a_p(f),a_{1,p^2}(f) \in \C$, then $f$ is an eigenform for the operator $B(p^2)$ with eigenvalue 
\begin{equation} 
b_{p^2}(f) \colonequals p\,a_{1,p^2}(f) + p^{2k-5}(1+p^2).
\end{equation}

\begin{lem} \label{lem:k2fint}
If $k=2$ and $f$ has integral Fourier coefficients, then $b_{p^2}(f) \in \Z$.
\end{lem}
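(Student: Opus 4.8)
The plan is to show that in weight $k=2$ the operator $B(p^2)$ preserves the space of paramodular cusp forms with integral Fourier coefficients, and then to read off integrality of the eigenvalue by a $\gcd$ argument. First I would reduce to the case that $f$ is \emph{primitive}, i.e.\ that the $\gcd$ of its Fourier coefficients is $1$; replacing $f$ by $f$ divided by this $\gcd$ changes neither the property of having integral Fourier coefficients nor the Hecke eigenvalue $b_{p^2}(f)$, so nothing is lost.

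The heart of the argument is the claim that $f\,|_2\,B(p^2)$ has integral Fourier coefficients whenever $f$ does. Since $T_2(p^2)=p^{2k-6}\id$ equals $p^{-2}\id$ when $k=2$, we have $B(p^2)=p\,T_1(p^2)+p^{-1}(1+p^2)\id$, and I would substitute this together with $k=2$ into the Fourier-coefficient formula \eqref{eqn:heckeopersFCMM}. In the resulting expression for $a(T;f\,|_2\,B(p^2))$, the terms carrying the prefactor $p^{k-3}$ acquire the overall factor $p\cdot p^{k-3}=1$ and those carrying $p^{3k-6}$ acquire $p\cdot p^{3k-6}=p$; each such term is a Fourier coefficient of $f$ evaluated at a matrix of the form $\tfrac1{p^j}T[u]$ (with the usual convention that $a(\,\cdot\,;f)$ vanishes on matrices not of the semi-integral shape required at level $N$), hence lies in $\Z$, and these contributions are manifestly integral. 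The only delicate piece is the coefficient of $a(T;f)$ itself: the $p^{2k-6}$-terms of \eqref{eqn:heckeopersFCMM}, after multiplication by $p$, contribute $p^{-1}$ times the sum $\bigl(p\,\indic(p\mid T[\begin{smallmatrix}1\\0\end{smallmatrix}])-1\bigr)+\sum_{\lambda\bmod p}\bigl(p\,\indic(p\mid T[\begin{smallmatrix}\lambda\\1\end{smallmatrix}])-1\bigr)$, while the $p^{-1}(1+p^2)\id$ summand of $B(p^2)$ contributes $p^{-1}(1+p^2)$. I would then check that these two $p^{-1}$-pieces combine into an honest integer: the displayed sum equals $p\,e-(p+1)$, where $e\in\{0,1,\dots,p+1\}$ is the number of the relevant indicator values that equal $1$, so adding $1+p^2$ gives $p\,e-p+p^2=p(e-1+p)$, which is divisible by $p$. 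This cancellation is precisely what the normalization in the definition of $B(p^2)$ is engineered to produce, and it is the only real content of the computation.

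Granting this, $f\,|_2\,B(p^2)$ lies in $S_2(K(N),\Z)$. Since $f$ is a $B(p^2)$-eigenform we have $f\,|_2\,B(p^2)=b_{p^2}(f)\,f$; choosing $T_0$ with $a(T_0;f)\neq0$ shows $b_{p^2}(f)=a(T_0;f\,|_2\,B(p^2))/a(T_0;f)\in\Q$. Finally, $b_{p^2}(f)\,a(T;f)=a(T;f\,|_2\,B(p^2))\in\Z$ for every $T$, and by primitivity there is a finite integer combination $\sum_T\lambda_T\,a(T;f)=1$; hence $b_{p^2}(f)=\sum_T\lambda_T\bigl(b_{p^2}(f)\,a(T;f)\bigr)\in\Z$. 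The main obstacle, such as it is, lies in the exact divisibility by $p$ in the coefficient of $a(T;f)$ above; the rest of the argument is formal.
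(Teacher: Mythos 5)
Your proof is correct and rests on the same core computation as the paper's: both isolate the $p^{2k-6}$ terms of \eqref{eqn:heckeopersFCMM} mod $p$ and observe the cancellation forced by the $(1+p^2)/p$ normalization in $B(p^2)$ (equivalently, that $p^2\,T_1(p^2)\equiv -\id\pmod{p}$ on integral forms). You repackage this as "$B(p^2)$ preserves $S_2(K(N),\Z)$" followed by a content/gcd argument, whereas the paper reads the resulting congruence $p\mid p^2a_{1,p^2}(f)+1$ directly off the eigenvalue and finishes by arithmetic; the substance is the same, and your explicit reduction to content $1$ spells out a point the paper leaves implicit.
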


\begin{proof}
We have observed that $p^2a_{1,p^2}(f) \in \Z$.  From \ref{eqn:heckeopersFCMM}, we observe the congruence 
 \[ p^2(f \,|_2\, T_1(p^2)) = p^2a_{1,p^2}(f)f \equiv -f \pmod{p}. \]  
so $p \mid (p^2a_{1,p^2}(f) + 1)$.  Therefore
\[ b_{p^2}(f) = p\,a_{1,p^2}(f)+(1+p^2)/p = (p^2a_{1,p^2}(f)+1)/p + p \in \Z. \qedhere \]
\end{proof}

Following Roberts--Schmidt \cite{RS06,RS07}, to $f$ we then assign the \defi{spinor Euler factor} at $p \nmid N$ in the arithmetic normalization by
\begin{equation} \label{eqn:QpfT}
Q_p(f,T) \colonequals 1 - a_p(f)T +b_{p^2}(f)T^2 - p^{2k-3} a_p(f)T^3 + p^{4k-6}T^4 \in 1+T\C[T].
\end{equation}
We will also call $Q_p(f,T)$ the \defi{spinor Hecke polynomial} at $p$. 
If $f$ has integral Fourier coefficients, then by Lemma \ref{lem:k2fint} we have $Q_p(f,T) \in 1+T\Z[T]$.

\subsection{Galois representations from Siegel modular forms}

We now seek to match the Galois representation coming from an abelian surface with one coming from an automorphic form.  In this section, we explain the provenance of the latter.  

We follow the presentation of Schmidt \cite{Schmidt17} for the association of an automorphic representation to a paramodular eigenform.  Let $\Gamma \le \GSp_4(\Q)^{+}$ be a subgroup commensurable with~$\Sp_4(\Z)$ and let $f \in S_k(\Gamma)$ be a cuspidal eigenform at all but finitely many places.  
In general, the representation~$\pi_f$ generated by the adelization of~$f$ 
may be reducible and hence not an automorphic representation at all.  It is still 
possible however, to associate a global Arthur parameter for $\GSp_4(\A)$ to~$f$ as follows.  
Because $f$ is cuspidal, the representation~$\pi_f$ decomposes as 
the direct sum of a finite number of automorphic representations, and each summand has the same 
global Arthur parameter among one of six types: the general type~{\bf (G)}, 
the Yoshida type~{\bf (Y)}, the finite type~{\bf (F)}, or types~{\bf (P)}, {\bf (Q)} or {\bf (B)} named after parabolic subgroups. 
Thus we may associate a global Arthur parameter directly to a paramodular eigenform~$f$.  The only type of 
global Arthur parameter that concerns us here is type~{\bf (G)} given by the formal tensor 
$\mu \sqtimes 1$, where $\mu$ is a cuspidal, self-dual, symplectic, unitary, automorphic representation 
of $\GL_4(\A)$ and $1$ is the trivial representation of $\SU_2(\A)$.  

\begin{rmk}
One can consider the eigenforms of type~{\bf (G)} to be those that \emph{genuinely} 
belong on $\GSp_4$.  
\end{rmk}

Second, when $f$ is of type~{\bf (G)} or~{\bf (Y)},  the associated representation~$\pi_f$ is irreducible 
and~$f$ is necessarily an eigenform at all good primes.  
Third, the type of~$f$ may be determined by checking \emph{one} Euler factor at a good prime.  
We state the paramodular case $\Gamma=K(N)$.  

\begin{prop}[\textup{Schmidt}]  \label{prop:schmidt} 
Let $f \in S_k(K(N))$ be a cuspidal eigenform for all primes $p \nmid N$.  Let $p \nmid N$ be prime 
and let $Q_p(f,T)$ be the Hecke polynomial of~$f$ at~$p$ defined in \textup{\eqref{eqn:QpfT}} in the arithmetic normalization.  Then $f$ is of type~{\bf (G)} if and only if 
all reciprocal roots of $Q_p(f,T)$ have complex absolute value $p^{k-\frac32}$. 
\end{prop}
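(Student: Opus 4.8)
The plan is to pass to the global Arthur parameter of $f$ and compare local components type by type. I take as given the inputs recalled above, following Schmidt \cite{Schmidt17} (and the results of Arthur and of Gee--Ta\"ibi on the discrete spectrum of $\GSp_4$): attached to the cuspidal Hecke eigenform $f$ there is a well-defined global Arthur parameter, of exactly one of the six types {\bf (G)}, {\bf (Y)}, {\bf (F)}, {\bf (P)}, {\bf (Q)}, {\bf (B)}, and at every prime $p\nmid N$ the unramified local component of $f$ has Satake parameter determined by the localized parameter. Thus, up to the arithmetic twist by $p^{k-3/2}$ built into \eqref{eqn:QpfT}, the multiset of reciprocal roots of $Q_p(f,T)$ equals the (twist of the) multiset of Satake parameters of the localized parameter, and the whole statement reduces to identifying, for each of the six types, the multiset of absolute values of these reciprocal roots.

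First I would treat the implication $(\Rightarrow)$. If $f$ is of type~{\bf (G)}, then the parameter is $\mu\sqtimes 1$ with $\mu$ a cuspidal, self-dual, symplectic automorphic representation of $\GL_4(\A)$ whose archimedean component is the (limit of) discrete series prescribed by the weight~$k$; hence the local component of $f$ at a good prime $p$ is the unramified representation whose Satake parameter is that of $\mu_p$. The Ramanujan bound for such $\mu$ is available in the literature --- for $k\ge 3$ the representation $\mu$ is regular algebraic and is cut out of the \'etale cohomology of a Shimura variety, so Deligne's purity theorem applies, while for $k=2$ one argues instead through the associated $\ell$-adic Galois representation (see Theorem~\ref{thm:galoisrep}) and a purity argument --- and it gives that the Satake parameter of $\mu_p$ is tempered. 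Therefore all four reciprocal roots of $Q_p(f,T)$ have complex absolute value $p^{k-3/2}$.

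For the converse $(\Leftarrow)$ I would run through the other five types and, using Schmidt's explicit description of the local $A$-packets, exhibit in each case a reciprocal root of $Q_p(f,T)$ whose absolute value is not $p^{k-3/2}$. In types~{\bf (B)}, {\bf (P)}, {\bf (Q)}, {\bf (F)} the parameter contains a tensor factor on which the Arthur group $\SU_2(\A)$ acts through a representation of dimension $\ge 2$; the contribution of such a factor to the localized parameter is non-tempered, and one reads off a reciprocal root of the form $(\text{unit})\cdot p^{\,j}$ with $j$ a half-integer at distance $\ge\tfrac12$ from $k-\tfrac32$ --- for instance the Saito--Kurokawa type~{\bf (B)} produces reciprocal roots $p^{k-1}$ and $p^{k-2}$. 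The Yoshida type~{\bf (Y)}, where the parameter is $\mu_1\sqtimes 1\,\boxplus\,\mu_2\sqtimes 1$ with $\mu_1\ne\mu_2$ cuspidal on $\GL_2(\A)$, would be handled by Schmidt's analysis of which members of the global packet carry a holomorphic archimedean component together with a paramodular vector: this pins down the infinitesimal characters of $\mu_1,\mu_2$ and forces the resulting $Q_p(f,T)$ to fail the absolute-value condition. Combining $(\Rightarrow)$ and $(\Leftarrow)$ proves the proposition.

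The main obstacle is the input needed for $(\Rightarrow)$: the full Ramanujan bound for the $\GL_4$-type component $\mu$ of a type~{\bf (G)} form. In contrast to the $\GL_2$ case this is not elementary, and in the weight~$2$ case it rests on the construction and local--global compatibility of the associated Galois representation rather than on a direct appeal to the Weil conjectures. The remaining work --- the case-by-case identification of the localized parameters in the five non-general types --- is exactly the content of Schmidt's packet analysis, which we cite.
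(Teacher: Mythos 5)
Your proof takes a substantially different route from the paper. The paper's proof is two citations to Schmidt \cite{Schmidt17}: Proposition~2.1.1 there shows (after the normalization conversion) that the local-factor condition forces $f$ to be of type~\textbf{(G)} or~\textbf{(Y)}, and Lemma~2.2.1 there rules out~\textbf{(Y)} for paramodular cusp forms. That is the direction ``absolute-value condition $\Rightarrow$ type~\textbf{(G)}'', which is the only direction the paper ever uses (to certify $f_{277}$, $f_{353}$, $f_{587}^-$ as type~\textbf{(G)} from one computed Euler factor). You instead reconstruct Schmidt's analysis from the Arthur classification. For types \textbf{(B)}, \textbf{(P)}, \textbf{(Q)}, \textbf{(F)} your reasoning is correct and is exactly the mechanism behind Schmidt's Proposition~2.1.1: the nontrivial $\SU_2$-factor forces explicit, computable deviations from temperedness at every unramified place. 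For type~\textbf{(Y)} you argue via the archimedean constraint on the two $\GL_2$-weights, which (if the weight configuration is pinned down correctly) also kills~\textbf{(Y)} at the level of Satake parameters; the paper avoids this by invoking Lemma~2.2.1 instead, which eliminates~\textbf{(Y)} at the level of paramodular vectors rather than via the Euler factor, and is thus both cleaner and independent of any weight bookkeeping.

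The genuine gap is in your $(\Rightarrow)$ direction, and it is one you half-acknowledge. You correctly identify the Ramanujan bound for the $\GL_4$-transfer $\mu$ as the required input. For $k\geq 3$ this is indeed in the literature (Weissauer, via purity of the cohomology of Siegel threefolds). But for $k=2$ the archimedean component is a holomorphic limit of discrete series, $\mu$ is not regular algebraic, and the Galois representation of Theorem~\ref{thm:galoisrep} is produced by a $p$-adic interpolation argument (Mok), which does not give purity at finite places. Your sketch asserts that ``a purity argument'' closes the $k=2$ case, but no such argument is available to us, and as far as we know the Ramanujan bound for weight-$2$ type~\textbf{(G)} Siegel paramodular forms remains open. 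This is precisely why the paper's proof is framed entirely in the $(\Leftarrow)$ direction and why it cites the ``only if'' direction to Schmidt rather than arguing it: nothing in the paper depends on $(\Rightarrow)$. In short: your $(\Leftarrow)$ is a reasonable unfolding of what the paper cites, but your $(\Rightarrow)$ at $k=2$ rests on an input that is not in the literature.
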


\begin{proof}
Converting from analytic to arithmetic normalization, by Schmidt \cite[Proposition 2.1]{Schmidt17} the stated local factor condition
  implies that $f$ is of type~{\bf (G)} or {\bf (Y)}, but paramodular cusp forms cannot be type~{\bf (Y)} also by Schmidt \cite[Lemma
  2.5]{Schmidt17}.
\end{proof}

Fourth, continuing in the paramodular case~$\Gamma=K(N)$, 
the global conductor of $\pi_f$ divides~$N$, and is equal to~$N$ 
if and only if $f$ is a newform.  
Finally, if $f$ is a newform---see Roberts--Schmidt \cite{RS06} for the global newform theory of paramodular forms---then $f$ is a Hecke eigenform at all  primes and for 
all paramodular Atkin-Lehner involutions.

We need one final bit of notation, concerning archimedian $L$-parameters. 
The real Weil group is $W(\R) = \C^{\times} \cup \C^{\times}j$, with $j^2=-1$ 
and $j z j\inv = {\bar z}$ for $ z \in \C^{\times}$.  
For $w,m_1,m_2 \in \Z$ with $m_1 > m_2 \ge 0$ and $w+1 \equiv m_1 + m_2 \pmod{2}$, we 
define the \defi{archimedean $L$-parameter} 
$\phi(w,m_1,m_2)\colon W(\R) \to \GSp_4(\R)$ 
by sending $z\in \C^{\times}$ to the diagonal matrix
\begin{equation}
 |z|^{-w} \diag\biggl( \left(\frac{z}{\bar z}\right)^{\frac{m_1+m_2}{2}},  \left(\frac{z}{\bar z}\right)^{\frac{m_1-m_2}{2}} , \left(\frac{z}{\bar z}\right)^{\frac{m_2-m_1}{2}}, \left(\frac{z}{\bar z}\right)^{\frac{-(m_1+m_2)}{2}}  \biggr)
\end{equation}
and $j$ to the antidiagonal matrix 
$\antidiag( (-1)^{w+1},  (-1)^{w+1}, 1, 1)$.  
The archimedean $L$-packet of $\GSp_4(\R)$ corresponding to $\phi(w,m_1,m_2)$ has two elements, one holomorphic and one generic: for $m_2 >0$ these are both discrete series representations, whereas for $m_2=0$ they are limits of discrete series.     
     
We are now ready to associate a Galois representation to a paramodular eigenform of type~\textbf{(G)}.  

\begin{thm}[Taylor--Laumon--Weissauer--Schmidt--Mok] \label{thm:galoisrep} 
Let $f \in S_k(K(N))$ be a Siegel paramodular newform of weight $k \geq 2$ and level $N$.  
 Suppose that $f$ is of type~{\bf (G)}.  
Then for any prime $\ell \nmid N$, there exists a
continuous, semisimple Galois representation   
\[
\rho_{f,\ell}\colon \Gal_\Q \to \GSp_4(\Q_\ell^{\al})
\]
with the following properties:
\begin{enumroman}
\item $\det(\rho_{f,\ell})=\chi_{\ell}^{4k-6}$;
\item The similitude character of $\rho_{f,\ell}$ is $\chi_\ell^{2k-3}$;
\item $\rho_{f,\ell}$ is unramified outside $\ell N$;
\item $\det(1-\rho_{f,\ell}(\Frob_p) T) = Q_p(f,T)$ for all $p \nmid \ell N$; and
\item The local Langlands correspondence holds for all primes $p \neq \ell$, up to semisimplification.
\end{enumroman}
\end{thm}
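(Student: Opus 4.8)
The plan is to assemble the theorem from the literature, the work being to check that the type \textbf{(G)} hypothesis supplies exactly what each ingredient requires and to handle the case $k=2$, in which $\pi_{f,\infty}$ is a holomorphic \emph{limit} of discrete series. First I would unwind type \textbf{(G)} as in the discussion preceding the theorem (following Schmidt): $\pi_f$ is an irreducible cuspidal automorphic representation of $\GSp_4(\A)$ with global Arthur parameter $\mu \sqtimes 1$, where $\mu$ is a cuspidal, self-dual, symplectic, unitary automorphic representation of $\GL_4(\A)$; since $f$ is a newform, the conductor of $\pi_f$ is exactly $N$ and $\pi_f$ is unramified outside $N$; and $\pi_{f,\infty}$ is the holomorphic member of the (limit of) discrete series $L$-packet attached to $\phi(w,m_1,m_2)$ for the triple determined by the scalar weight $k$, with $m_2=0$ precisely when $k=2$.

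Next, suppose $k \ge 3$, so $\pi_f$ is cohomological of regular weight. Then I would invoke the construction of $\rho_{f,\ell}$ realized in the $\ell$-adic interior cohomology $\coho^3$ of the Siegel modular threefold of paramodular level $K(N)$ with the coefficient local system attached to $(w,m_1,m_2)$: this is Taylor's construction, made unconditional and upgraded to a $4$-dimensional \emph{symplectic} representation with the expected unramified local factors by Laumon and Weissauer (the symplectic structure reflecting that $\mu$ is symplectic and $w$ odd). Weissauer's analysis of $\coho^3$ of Siegel threefolds also yields local--global compatibility away from $\ell$ up to semisimplification, which is (v); together with Schmidt's normalization of the local spin factor recorded in \eqref{eqn:QpfT}, the Satake relation at good primes becomes (iv). The remaining items are then formal: (iii) follows from the good reduction of the paramodular threefold away from $N$; (ii) from matching the similitude character of $\pi_f$ in the arithmetic normalization; and (i) from the identity $\det\rho = (\text{similitude character})^{2}$ for a $\GSp_4$-valued representation.

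For $k=2$ the archimedean component is a genuine limit of discrete series, so $f$ occurs in the coherent but not the $\ell$-adic \'etale cohomology of the Siegel threefold with a nontrivial coefficient system, and a congruence argument in the spirit of Deligne--Serre is needed: for each $r\ge 1$ one produces a cuspidal paramodular eigenform of scalar weight $\ge 3$ whose Hecke eigenvalues agree with those of $f$ modulo $\ell^r$ (using that weight-$2$ paramodular forms occur as $\ell$-adic limits of classical higher-weight forms, e.g.\ through a $p$-adic family), attaches its Galois representation by the $k\ge 3$ case, and patches the associated pseudorepresentations to obtain $\rho_{f,\ell}$. Verifying that this goes through for paramodular forms and that (i)--(v) survive the limit---(iv) being a closed condition, (v) asserted only up to semisimplification---is the contribution of Mok.

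The main obstacle is precisely this limiting step together with local--global compatibility at the primes dividing $N$: passing from discrete series to their limits is not formal, the monodromy at primes of bad reduction is genuinely delicate (which is exactly why (v) is only claimed up to semisimplification), and one must also check the whole construction remains valid for the coefficient prime $\ell=2$, which is the case we use since $N$ is odd.
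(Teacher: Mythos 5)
Your proposal is correct and takes essentially the same route as the paper: unwind the type \textbf{(G)} hypothesis via Schmidt, invoke Taylor/Laumon/Weissauer for the cohomological ($k \geq 3$) case to get a symplectic $4$-dimensional representation in $\coho^3$ of the paramodular Siegel threefold, and for $k=2$ pass through Mok's $\ell$-adic deformation/Hida-theory argument (your ``congruence in the spirit of Deligne--Serre'' patched via pseudorepresentations is exactly the construction the paper attributes to Mok). The only cosmetic difference is that the paper outsources (iii)--(iv) to Berger--Klosin's statement of the well-known fact rather than deriving them from the geometric construction as you do, and you flag slightly more explicitly the need to verify the setup at $\ell=2$; neither is a substantive divergence.
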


By (v), we mean that the Weil--Deligne representations associated to the restriction of the Galois representation $\rho_{f,\ell}$ to $\Gal(\Q_p^{\textup{al}}\,|\,\Q_p)$ agrees with that associated to the $\GL_n(\Q_p)$-representation $\pi_p$ attached by the local Langlands correspondence up to semisimplification \emph{without} information about the nilpotent operator $N$: in the notation of Taylor--Yoshida \cite[p.\ 468]{TaylorYoshida} we mean $(V,r,N)^{\textup{ss}}=(V,r^{\textup{ss}},0)$.

\begin{proof}
The existence and properties (i)--(ii) follow from the construction and an argument of Taylor \cite[Example 1, section 1.3]{Taylor}.  Properties (iii) and (iv) are provided by Berger--Klosin \cite[Theorem 8.2]{BKPSY} (they claim in the subsequent Remark 8.3 that the result is ``well-known'').  

We now sketch the construction, and we use the argument of Mok to conclude also property (v).  By the discussion above, following Schmidt \cite{Schmidt17}, we may attach to $f$ a cuspidal automorphic representation $\Pi_f$ of $\GSp_4(\A)$ of type~{\bf (G)}.  
The hypothesis that $f$ is of type~\textbf{(G)} assures that the automorphic representation $\Pi_f$ is irreducible.  If $k \geq 3$, then the automorphic representation is of cohomological type, and from a geometric construction we obtain a Galois representation $\rho_{f,\ell}\colon \Gal_\Q \to \GSp_4(E)$ by work of Laumon \cite{Laumon} and Weissauer \cite[Theorems I and IV]{Weissauer}, where $E$ is the finite extension of $\Q_\ell$ containing the Hecke eigenvalues of $f$ (choosing an isomorphism between the algebraic closure of $\Q$ in $\C$ and in $\Q_\ell^{\al}$): one shows that the representation takes values in $\GL_4(E)$ and that it preserves a nondegenerate symplectic bilinear form invariant under $\rho_{f,\ell}(\Gal_\Q)$ so lands in $\GSp_4(E)$.  Thereby, properties (i)--(iv) are verified.  

For all $k \geq 2$, with the above conventions (including archimedean $L$-parameters) we verify that $\Pi_f$ satisfies the hypotheses of a theorem of Mok \cite[Theorem 4.14]{Mok}:  from this theorem we obtain a unique, continuous semisimple representation $\rho_{f,\ell} \colon \Gal_\Q \to \GL_4(\Q_\ell^{\al})$ where $\Q_\ell^{\al}$ is an algebraic closure of $\Q_\ell$.  For $k=2$, Mok constructs the representation by $\ell$-adic deformation using Hida theory from those of Laumon and Weissauer, and so properties (i)--(iv) and the fact that the representation is symplectic continue to hold in the limit; and property (v) is a conclusion of his theorem.  

To illustrate this convergence argument, we show that the representation is symplectic.  Let $\{f_n\}_n$ be a sequence of Siegel paramodular forms of weights $k_n > 2$ such that $f_n$ converge $p$-adically to $f$ (for example, multiplying by powers of the Hasse invariant).  By the previous paragraph, each $f_n$ is symplectic with representation $\rho_n$ so 
\begin{equation} \label{eqn:rhotrivpsi}
\textstyle{\bigwedge}^2 \rho_n (3-2k_n) \simeq \rho_{\textup{triv}} \oplus \psi_n
\end{equation} 
is equivalent to the direct sum of the trivial representation $\rho_{\textup{triv}}$ of degree $1$ and the representation $\psi$ of degree $5$ with values in $\SO_5(\Q_\ell^{\al})$.  The sequence $\Tr \psi_n$ of pseudorepresentations converges to a pseudorepresentation by \eqref{eqn:rhotrivpsi} and continuity of the trace, and this limit is the trace of a representation $\psi$.  From this identity of traces, we conclude
\[ \textstyle{\bigwedge}^2 \rho(-1) \simeq \rho_{\textup{triv}} \oplus \psi \]
and thus $\rho$ is symplectic with cyclotomic similitude character.

Mok's theorem relies on work of Arthur in a crucial way.  For further attribution and discussion, see Mok \cite[About the proof, pp.\ 524ff]{Mok} and the overview of the method by Jorza \cite[\S\S 1--3]{Jorza}. 
\end{proof}

Let $f$ be as in Theorem \textup{\ref{thm:galoisrep}}, with Galois representation $\rho_{f,\ell} \colon \Gal_{\Q,S} \to \GSp_4(\Q_\ell^{\al})$ where $S \colonequals \{p : p \mid N\} \cup \{\ell,\infty\}$.  By the Baire category theorem, we may descend the representation to a finite extension $E' \subseteq \Q_\ell^{\al}$ of $\Q_\ell$.  Let $\frakl'$ be the prime above $\ell$ in the valuation ring $R'$ of $E'$ and let $k'$ be the residue field of $R'$.  Choose a stable $R'$-lattice in the representation space $V' \colonequals (E')^4$ and reduce modulo $\frakl'$; the semisimplification yields a semisimple residual representation $\overline{\rho}_{f,\ell}^{\textup{ss}} \colon \Gal_{\Q,S} \to \GL_4(k')$, unique up to equivalence.  

Applying a recent result of Serre, we now show that the residual representation is symplectic.

\begin{lem} \label{lem:Serrealtisthere}
The semisimplification $\overline{\rho}_{f,\ell}^{\textup{ss}} \colon \Gal_{\Q,S} \to \GL_4(k')$
is compatible with a nondegenerate alternating form with similitude character $\overline{\chi}_\ell^{2k-3}$; in particular, up to equivalence its image lies in $\GSp_4(k')$.
\end{lem}

\begin{proof}
We refer to Serre \cite{Serre:altform} and to the Appendix.  Let $\langle\,,\,\rangle$ be the alternating form on $V'$ with similitude character $\epsilon \colonequals \chi_\ell^{2k-3}$ provided by Theorem \ref{thm:galoisrep}.  Then $V'$ is a finite-dimensional $E'$-vector space equipped with a continuous action by $\Gal_{\Q,S}$ via $\rho_{f,\ell}$.  Moreover, for all $\sigma \in \Gal_{\Q,S}$ and all $x,y \in V'$ we have 
\begin{equation} \label{eqn:bilinearformalt}
\begin{aligned} 
\langle \sigma x, \sigma y \rangle = \chi_\ell^{2k-3}(\sigma) \langle x, y \rangle;
\end{aligned}
\end{equation}
i.e., the pairing $\langle\,,\rangle$ is $\chi_\ell^{2k-3}$-covariant under the action of $\Gal_{\Q,S}$.  Let $V'_{k'}$ be the $k'$-vector space underlying the semisimplification $\overline{\rho}_{f,\ell}^{\textup{ss}}$.  Then Serre proves in Theorem 1 in the Appendix that there exists a nondegenerate $k'$-bilinear alternating form on $V'_{k'}$ that again is covariant with respect to (the reduction of)
$\chi_\ell^{2k-3}$ under the action of $\Gal_{\Q,S}$. 

The final statement holds because, up to equivalence by $\GL_4(k')$, we may assume the alternating form is the standard form, so now the image lands in $\GSp_4(k')$, as claimed.  
\end{proof}

Next, we seek descent preserving the symplectic form.  Let $E$ be the extension of $\Q_\ell$ generated by the Hecke eigenvalues of $f$ (with respect to a choice of isomorphism between the algebraic closure of $\Q$ in $\C$ and in $\Q_\ell^{\al}$); then $E$ also contains all coefficients of the Hecke polynomials $Q_p(f,T)$.  Let $R$ be the valuation ring of $E$ and let $k$ be its residue field.  We have $E \subseteq E'$, and we would like to be able to descend the representation to take values in $\GSp_4(E)$.  However, there is a possible obstruction coming from the Brauer group of $\Q_\ell$; such an obstruction arises for example in the Galois representation afforded by a QM abelian fourfold at a prime $\ell$ dividing the discriminant of the quaternion algebra $B$, which has image in $\GL_2(B \otimes \Q_\ell)$ and not $\GSp_4(\Q_\ell)$.  
Under an additional hypothesis, we may ensure descent following Carayol and Serre as follows.  

\begin{lem} \label{lem:descentwhenirred}
With hypotheses as in Theorem \textup{\ref{thm:galoisrep}}, the following statements hold.
\begin{enumalph}
\item The semisimplified residual representation $\overline{\rho}_{f,\ell}^{\textup{ss}}$ descends to
\[ \overline{\rho}_{f,\ell}^{\textup{ss}} \colon \Gal_{\Q,S} \to \GSp_4(k) \]
up to equivalence.
\item If $\overline{\rho}_{f,\ell}^{\textup{ss}}=\overline{\rho}_{f,\ell}$ is absolutely irreducible, then $\rho_{f,\ell}$ descends to
\[ \rho_{f,\ell} \colon \Gal_{\Q,S} \to \GSp_4(E) \]
up to equivalence, where $E$ is the extension of $\Q_\ell$ generated by the Hecke eigenvalues of $f$ as above.
\end{enumalph}
\end{lem}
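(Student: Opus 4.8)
The plan is to carry out the descent arguments of Carayol and Serre in two stages, one for each part of the lemma: (a) is a purely residual statement over the finite field $k$, where no Brauer obstruction can arise, while (b) is the $\ell$-adic statement, and the absolute irreducibility of $\overline{\rho}_{f,\ell}^{\textup{ss}}$ is exactly the input needed to kill the obstruction there. The basic source of rationality is Chebotarev together with the integrality of the Hecke polynomials: after choosing a $\Gal_\Q$-stable lattice we have $\rho_{f,\ell}(\Gal_\Q)\subseteq\GL_4(R')$, so the characteristic polynomials of $\rho_{f,\ell}$ are continuous functions on the compact group $\Gal_\Q$ agreeing on the dense set of Frobenii with the $Q_p(f,T)$, whose coefficients lie in $R\subseteq E$; hence $\tr\rho_{f,\ell}$ takes values in $R$ and $\tr\overline{\rho}_{f,\ell}^{\textup{ss}}$ takes values in $k$.

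For (a): by the Brauer--Nesbitt theorem a semisimple representation is determined by its character, and since $k$ is finite there are no nontrivial central division algebras over it (Wedderburn), so a semisimple representation over $k'$ whose character is $\Gal(k'/k)$-invariant is the base change of a semisimple representation $\overline{\tau}\colon\Gal_\Q\to\GL_4(k)$; this handles the $\GL_4$ descent. To recover the symplectic structure, observe that the residual similitude character is the reduction of $\chi_\ell^{2k-3}$, hence $\F_\ell^\times$-valued and a fortiori $k^\times$-valued, and that over $k'$ the representation preserves a nondegenerate alternating form with this multiplier; since $\overline{\tau}$ is semisimple and has the same ($k$-valued) character, the Frobenius--Schur-type data that by Serre's results govern the existence of an invariant alternating form are unchanged, so $\overline{\tau}$ also preserves a nondegenerate alternating form over $k$, i.e.\ $\overline{\rho}_{f,\ell}^{\textup{ss}}$ takes values in $\GSp_4(k)$.

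For (b): first, $\rho_{f,\ell}$ is itself absolutely irreducible, since reducing a stable $R'$-lattice gives a representation whose semisimplification $\overline{\rho}_{f,\ell}^{\textup{ss}}$ is absolutely irreducible, forcing the reduction itself to be absolutely irreducible, whence by Nakayama the $R'$-subalgebra of $\M_4(R')$ generated by $\rho_{f,\ell}(\Gal_\Q)$ is all of $\M_4(R')$; in particular the stable lattice is unique up to homothety. Let $A\subseteq\M_4(E')$ be the $E$-subalgebra generated by $\rho_{f,\ell}(\Gal_\Q)$; absolute irreducibility gives $A\otimes_E\Q_\ell^{\al}=\M_4(\Q_\ell^{\al})$, so $A$ is a central simple $E$-algebra of dimension $16$, say $A\simeq\M_r(D)$ with $D$ a central division $E$-algebra of degree $d=4/r$, and $\rho_{f,\ell}$ factors through $A^\times$. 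The crux is to prove $d=1$. Since $\rho_{f,\ell}(\Gal_\Q)$ is compact we may conjugate so that it lies in the maximal order $\M_r(\calO_D)$; reducing the natural module $\calO_D^r$ through its radical filtration (whose $d$ graded pieces each realize $\rho_{f,\ell}\bmod\mathrm{rad}\,\calO_D$ over the degree-$d$ residue field of $D$) and comparing Jordan--H\"older factors with those of a stable lattice for $\rho_{f,\ell}$ via Brauer--Nesbitt and part (a), one finds that $\overline{\rho}_{f,\ell}^{\textup{ss}}\otimes\overline{k}$ would then be a direct sum of $d$ Frobenius-conjugate representations of $\overline{k}$-dimension $4/d$; absolute irreducibility forces $d=1$. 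Hence $A\simeq\M_4(E)$, and an $E$-basis of the simple $A$-module (now $4$-dimensional over $E$) realizes $\rho_{f,\ell}$ over $\GL_4(E)$. Finally, over $E$ this descended representation is absolutely irreducible and isomorphic to its contragredient twisted by the $E$-rational character $\chi_\ell^{2k-3}$, so its space of invariant bilinear forms is one-dimensional over $E$; as over $E'$ this form is a scalar multiple of the given nondegenerate alternating form, it is itself nondegenerate and alternating, so $\rho_{f,\ell}$ takes values in $\GSp_4(E)$.

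The main obstacle is the vanishing of the Brauer obstruction $[D]$ in part (b). This is precisely the point where the hypothesis on $\overline{\rho}_{f,\ell}^{\textup{ss}}$ cannot be dropped: as the discussion preceding the lemma recalls, for a QM abelian fourfold the obstruction is the class of a ramified quaternion algebra and the residual representation becomes reducible after base change. The only genuinely delicate bookkeeping is the comparison of semisimplifications across the possibly ramified extension $E'/E$---matching the reduction of a maximal-order lattice in $\M_r(D)$ with the reduction of $\rho_{f,\ell}$ itself; the remaining ingredients (Chebotarev for the trace fields, Brauer--Nesbitt, uniqueness of invariant forms under absolute irreducibility, and Serre's Theorem~A) are standard.
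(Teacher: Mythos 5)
Your proof is correct but takes a genuinely different route for part (b), and a slightly different route for part (a). For (a), your $\GL_4$-descent (Brauer--Nesbitt plus Wedderburn for the vanishing of $\Br(k)$) is essentially the same as the paper's citation of Taylor's lemma; but for the symplectic form you appeal vaguely to ``Frobenius--Schur-type data,'' whereas the paper makes the descent concrete: since the matrix entries of $\overline{\rho}_{f,\ell}^{\textup{ss}}$ now lie in $k$, the space of $\Gal_\Q$-invariant alternating forms is a $k$-linear system, and the existence of a nondegenerate one over $k'$ forces one over $k$. Your version would be stronger with that explicit linear-algebra step in place of the Frobenius--Schur allusion.

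For (b), the paper simply cites Carayol's Th\'eor\`eme~2 (descent of a representation over a complete local ring from its trace field when the residual is absolutely irreducible), while you essentially reprove the relevant special case from scratch: you locate the obstruction as the class of a central division algebra $D$ over $E$ with $A\simeq\M_r(D)$, conjugate into a maximal order, and kill $[D]$ by a Jordan--H\"older comparison. This is a classical alternative strategy (in the spirit of Ribet, Shimura, and Cremona for $\GL_2$-type questions) and is more illuminating about where the obstruction lives, which nicely parallels the paper's preceding remark about the QM abelian fourfold example. Your key step is stated very tersely, so let me confirm it goes through: the reduction $\calO_D^r/\frakm_E\calO_D^r$ has a radical filtration with $d$ graded pieces, each a $k_D$-space of $k_D$-dimension $r=4/d$ (Frobenius-conjugate to one another because right multiplication by a uniformizer of $\calO_D$ is $\Gal_\Q$-equivariant but $k_D$-semilinear); hence after $\otimes_{k_E}\overline{k}$ every Jordan--H\"older constituent has $\overline{k}$-dimension at most $4/d$. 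On the other hand $D^r\otimes_E E'\simeq((E')^4)^{\oplus d}$, so by lattice-independence of JH factors the same reduction must decompose as $d$ copies of $\overline{\rho}_{f,\ell}^{\textup{ss}}\otimes\overline{k}$, which by hypothesis is a single irreducible of dimension $4$. Compatibility forces $4/d\geq 4$, i.e., $d=1$; note this dimension comparison sidesteps the character comparison, which is important here since $\ell=2$ could divide $d$ and would make a naive trace argument degenerate. Your final step recovering the $E$-rational alternating form via self-duality and Schur is correct and is exactly the paper's closing argument.
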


\begin{proof}
We begin with (a).  First, a semisimple representation into $\GL_4(k')$ is determined by its traces, and so up to equivalence we may descend $\overline{\rho}_{f,\ell}^{\textup{ss}}$ to take values in $\GL_4(k) \subseteq \GL_4(k')$ (for a complete proof, see e.g.\ Taylor \cite[Lemma 2, part 2]{Taylor}).  The semisimplification $\overline{\rho}_{f,\ell}^{\textup{ss}}$ was only well-defined up to equivalence (in $\GL_4(k')$) anyway, so Lemma \ref{lem:Serrealtisthere} still applies and the underlying space $V_k = k^4$ of $\overline{\rho}_{f,\ell}^{\textup{ss}}$ has the property that its extension $V_{k'} = (k')^4$ to $k'$ carries an alternating form with $k$-valued similitude character $\overline{\chi}_\ell^{2k-3}$.  The set of such alternating forms with fixed similitude character is defined by linear conditions over $k$ since the image of $\overline{\rho}_{f,\ell}$ belongs to $\GL_4(k)$; therefore, the existence of a form defined on $V_{k'}$ implies the existence of such a form on $V_k$ with the same similitude character.  Again up to equivalence, the image of $\overline{\rho}_{f,\ell}^{\textup{ss}}$ may be taken to lie in $\GSp_4(k)$.

For statement (b), by a theorem of Carayol \cite[Th\'eor\`eme 2]{Carayol} under the hypothesis that the \emph{residual representation is absolutely irreducible}, the representation $\rho_{f,\ell}$ takes values in $\GL_4(E)$.  Again we have a nondegenerate alternating form compatible with $\Gal_{\Q,S}$, and repeating the first part of the argument in the previous paragraph we may assume it takes values in $E$; conjugating, we conclude that the image is in $\GSp_4(E)$.
\end{proof}

\begin{remark}
The statement of Theorem \ref{thm:galoisrep} is not the most general statement that could be proven (in several respects), but it is sufficient for our purposes.

  Berger--Klosin \cite[Theorem 8.2]{BKPSY} attach to any paramodular newform $f$ a Galois representation into $\GL_4(\Q_\ell^{\text{al}})$, not just those of type~{\bf (G)}.
The remaining types are related to constructions
  of automorphic representations from those in $\GL_2(\A)$, where
  the local Langlands correspondence is known.  We do not know a reference for a complete argument for these remaining cases.  In this article, we are only concerned with forms of type~{\bf (G)}.  
\end{remark}

A consequence of Mok's proof of Theorem
\ref{thm:galoisrep}(v) is encoded in the following result.

\begin{lemma} \label{lem:fixedfieldordp}
Let $K$ be the fixed field of $\ker \overline{\rho}_{f,\ell}$ and let $\cond(\overline{\rho}_{f,\ell})$ be the Artin conductor of the representation $\overline{\rho}_{f,\ell}$ of $\Gal(K\,|\,\Q)$.  If $p \parallel N$ is odd, then $\ord_p(\cond(\overline{\rho}_{f,\ell})) \leq 1$.
\end{lemma}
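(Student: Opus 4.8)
The plan is to leverage property (v) of Theorem \ref{thm:galoisrep} (the local Langlands correspondence up to semisimplification, due to Mok) together with the classification of paramodular representations of $\GSp_4(\Q_p)$ with nontrivial paramodular fixed vectors. First I would recall that since $f$ is a newform of level $N$ with $p \parallel N$, the local component $\pi_p$ of the associated automorphic representation $\Pi_f$ has paramodular conductor exactly $p^1$; by the theory of paramodular newforms (Roberts--Schmidt), the representations of $\GSp_4(\Q_p)$ admitting a fixed vector under the paramodular group of level $p$ but not level $1$ form a short explicit list. The key point is that for each such $\pi_p$, the associated Weil--Deligne representation $(V, r, N_0)$ has the property that the underlying Weil representation $r$ (forgetting the monodromy operator $N_0$) is only \emph{mildly} ramified — in fact $r^{\textup{ss}}$ has Artin conductor at most $1$ (the conductor $p^1$ of $\pi_p$ is "paid for" either by a tamely ramified character or by the monodromy operator, not by wild ramification nor by two independent tame twists).

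The steps, in order, would be: (1) invoke Theorem \ref{thm:galoisrep}(v) to identify the Weil--Deligne representation attached to $\rho_{f,\ell}|_{\Gal(\Q_p^{\al}\,|\,\Q_p)}$, up to semisimplification and ignoring $N_0$, with the one attached to $\pi_p$ by local Langlands; (2) enumerate the possibilities for $\pi_p$ among paramodular representations of conductor exponent $1$ at the odd prime $p$ using Roberts--Schmidt's tables, and in each case read off the Weil part $r^{\textup{ss}}$ of the Weil--Deligne parameter; (3) observe that in every case the restriction of $r^{\textup{ss}}$ to the inertia group $I_p$ factors through the tame quotient and has a fixed subspace of codimension at most $1$ — equivalently $\dim V - \dim V^{I_p} \le 1$ — so that the (semisimplified, tame) Artin conductor is at most $1$; (4) pass from $\rho_{f,\ell}$ to its reduction $\overline{\rho}_{f,\ell}$, noting that reduction mod $\ell$ cannot increase the inertia-invariants in a way that raises the conductor: the tame inertia image in characteristic $\ell$ (with $\ell = 2 \nmid p$) still acts through a quotient of the same tame group, and the codimension of the fixed space can only drop or stay the same, giving $\ord_p(\cond(\overline{\rho}_{f,\ell})) \le \ord_p(\cond(r^{\textup{ss}})) \le 1$. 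Since $p$ is odd, there is no wild ramification at $p$ in the residual representation and the conductor is purely the tame (codimension-of-invariants) contribution.

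The main obstacle I anticipate is step (4): controlling the conductor of the \emph{residual} representation from that of the $\ell$-adic one. In general reduction mod $\ell$ can only decrease the conductor when $\ell$ is prime to $p$ and the relevant ramification is tame, but one must be careful that the tame inertia at $p$ acts through a cyclic group of order prime to $\ell=2$ (true since $p$ is odd and the relevant characters have order dividing small integers), so that the invariant subspace behaves well under reduction; the cleanest way is probably to note that the conductor exponent at $p$ for a tamely ramified representation equals the codimension of the inertia-fixed subspace, and this codimension is upper-semicontinuous under specialization. A secondary point to handle carefully is making sure Mok's statement (v) — which deliberately omits information about the monodromy operator $N_0$ — is nonetheless enough: this is fine precisely because the conductor of the \emph{semisimplified} Weil--Deligne representation (which is what controls $\overline{\rho}_{f,\ell}$, since reduction kills $N_0$ anyway when we only track inertia invariants) ignores $N_0$, so the loss of $N_0$-information is harmless here.
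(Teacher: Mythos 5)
Your steps (1)--(3) are fine, and they echo the paper's first sentence: property (v) of Theorem~\ref{thm:galoisrep} is only up to semisimplification, so it identifies $r^{\textup{ss}}$ but discards the monodromy operator $N_0$. The gap is in step (4) and the ``secondary point,'' where you assert that $\ord_p(\cond(\overline{\rho}_{f,\ell})) \le \ord_p(\cond(r^{\textup{ss}}))$ because ``reduction kills $N_0$.'' That inference is false. Reducing $\rho_{f,\ell}$ modulo $\ell$ does \emph{not} discard the unipotent part of the inertia action: under Grothendieck's $\ell$-adic monodromy theorem, inertia at $p$ acts (after passing to a finite extension) by $\exp(t_\ell(\sigma)N_0)$, and modulo $\ell$ this is $1+\bar t_\ell(\sigma)\bar N_0 + \cdots$, which is nontrivial whenever $\bar N_0 \neq 0$. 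The two inequalities you actually have go in opposite directions: reduction gives $a(\overline{\rho}_{f,\ell}) \le a(\rho_{f,\ell})$, while semisimplification of the Weil--Deligne datum gives $a(r^{\textup{ss}}) \le a(\rho_{f,\ell})$; neither yields $a(\overline{\rho}_{f,\ell}) \le a(r^{\textup{ss}})$. Concretely, if $r|_{I_p}$ were trivial but $N_0$ were a full $4\times 4$ Jordan block with $\bar N_0 \ne 0$, then $a(r^{\textup{ss}})=0$ while $a(\overline{\rho}_{f,\ell})=3$. So statement (v) alone does not bound the conductor you need.

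The paper gets around exactly this difficulty by a different argument, which you should adopt or cite. Rather than trying to extract the bound from (v), it observes that in Mok's construction $\rho_{f,\ell}$ arises as a specialization/limit in a family of cohomological forms for which the \emph{full} local--global compatibility (including $N_0$) is available from Laumon--Weissauer--Taylor. Each such form has paramodular conductor dividing $N$, so at $p\parallel N$ its local Weil--Deligne representation is unramified or Steinberg (monodromy rank at most $1$). Passing to the limit, the rank of $N_0$ is lower-semicontinuous, so $\rho_{f,\ell}|_{\Gal(\Q_p^{\al}\,|\,\Q_p)}$ still has a monodromy operator of rank at most $1$, hence $a(\rho_{f,\ell})\le 1$, hence $a(\overline{\rho}_{f,\ell})\le 1$. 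Your enumeration of paramodular types of conductor $p^1$ via Roberts--Schmidt is the right catalogue of possibilities, but you must bound the rank of $N_0$, not just the conductor of $r^{\textup{ss}}$, and the only tool on offer for that is the family/specialization argument, not (v).
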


\begin{proof}
The proof of Theorem \ref{thm:galoisrep}(v) is only up to semisimplification, so we do not know the complete statement of local Langlands under the patching argument that is employed.  However, in specializing the family to the accumulation point $f$ in the family, there is nevertheless an \emph{upper bound} on the level: the representation is necessarily either unramified or is Steinberg with level $p$, and accordingly the conductor has $p$-valuation $0$ or $1$.  
\end{proof}

\section{Group theory and Galois theory for \texorpdfstring{$\GSp_4(\F_2)$}{GSp4(F2)}} \label{sec:GSp4}

In this section, we carry out the needed Galois theory for the group $\GSp_4(\F_2)$.  Specifically, we carry out the task outlined in section \ref{sec:appfaltserre}: given $G = \img \overline{\rho} \leq \GSp_4(\F_2)$, and for each obstructing extension $\varphi$ extending $\overline{\rho}$, we compute an exact core-free subgroup $D \leq E$ (as large as possible) and the list of $E$-conjugacy classes of elements whose upper trace is nonzero.  The arguments provided in this section are done once and for all for the group $\GSp_4(\F_2)$; we apply these to our examples in section \ref{sec:verifypara}.

Since any similitude factor from $\GSp_4(\F_2)$ belongs to $\F_2^\times$ and is therefore trivial, we have an equality $\Sp_4(\F_2)=\GSp_4(\F_2)$.  Rather than writing one or the other throughout, we use the notation that seems natural to us in the given context.

\subsection{Symplectic group as permutation group} \label{sec:symperm}

We pause for some basic group theory.  We have an isomorphism $\iota\colon S_6 \xrightarrow{\sim} \Sp_4(\F_2)$, where $S_6$ is the symmetric group on $6$ letters, which we make explicit in the following manner.  Let $U \colonequals \F_2^6$, and equip $U$ with the coordinate action of $S_6$ and the standard nondegenerate alternating (equivalently, symmetric) bilinear form 
$\langle x,y \rangle=\sum_{i=1}^6 x_i y_i$ visibly compatible with the $S_6$-action.  Let $U^0 \subset U$ be the trace $0$ hyperplane, let $L$ be the $\F_2$-span of $(1,\dots,1)$, and let $Z \colonequals U^0/L$ be the quotient, so $\dim_{\F_2} Z = 4$.  Then $Z$ inherits both an action of $S_6$ and a symplectic pairing, which remains nondegenerate: specifically, the images
\[ e_1 \colonequals (1,1,0,0,0,0),\ e_2 \colonequals (0,0,1,1,0,0),\ e_3 \colonequals (0,0,0,1,1,0),\ e_4 \colonequals (0,1,0,0,0,1) \in Z \]
are a basis for $Z$ in which the Gram matrix of the induced pairing is the anti-identity matrix, so e.g.\ $\langle e_1,e_4 \rangle = \langle e_2,e_3 \rangle = 1$.  (An alternating pairing over $\F_2$ is symmetric, and we have chosen the standard such form.)  We compute that
\begin{equation} \label{eqn:S6GSp2}
\begin{aligned}
\iota\colon S_6 &\to \Sp_4(\F_2) \\
(1\ 2\ 3\ 4\ 5),(1\ 6) &\mapsto \begin{pmatrix}1&0&0&1\\0&0&1&0\\1&1&0&1\\1&0&1&0 \end{pmatrix}, \begin{pmatrix}0&0&0&1\\0&1&0&0\\0&0&1&0\\1&0&0&0 \end{pmatrix}.
\end{aligned}
\end{equation}

We have
\begin{equation} \label{eqn:VLie0}
\Lie^0(\GSp_4)(\F_2)=\Liesp_4(\F_2)=\{A \in \M_4(\F_2) : A^{\transpose}J + JA = 0\} \simeq \F_2^{10} 
\end{equation}
where $J \in \M_4(\F_2)$ is the anti-identity matrix (with $1$ along the anti-diagonal), and we have an exact sequence
\begin{equation} 
1 \to \Liesp_4(\F_2) \to \Liesp_4(\F_2) \rtimes G \xrightarrow{\pi} G \to 1
\end{equation}
with $\pi\colon \Liesp_4(\F_2) \rtimes G \to G$ the natural projection map.  As in \eqref{eqn:GL2n} we identify
\begin{equation} \label{eqn:VGGL2}
\begin{aligned} 
\Liesp_4(\F_2) \rtimes G \leq \M_4(\F_2) \rtimes G &\hookrightarrow \GL_{8}(\F_2) \\
(a,g) &\mapsto \begin{pmatrix} 1 & a \\ 0 & 1 \end{pmatrix} \begin{pmatrix} g & 0 \\ 0 & g \end{pmatrix} = \begin{pmatrix} g & ag \\ 0 & g \end{pmatrix}.
\end{aligned}
\end{equation}

The following lemmas follow from straightforward computation.

\begin{lem}
The group $\Sp_4(\F_2)$ has elements of orders $1,\dots,6$ with the following possibilities for their characteristic polynomials:
\begin{equation}
\begin{array}{c|c}
\textup{Order} & \textup{Characteristic polynomial} \\
\hline	\hline
\rule{0pt}{2.5ex} 1,2,4 & x^4+1 \\
3,6 & x^4+x^2+1 \textup{ or } x^4+x^3+x+1 \\
5 & x^4+x^3+x^2+x+1 \\
\end{array}
\end{equation}
\end{lem}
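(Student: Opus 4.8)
The plan is to transport the question to $S_6$ via the isomorphism $\iota\colon S_6\xrightarrow{\sim}\Sp_4(\F_2)$ of \eqref{eqn:S6GSp2} and to read off characteristic polynomials from cycle types. Since $\iota$ is a group isomorphism, the order of $\iota(\sigma)$ equals the order of $\sigma\in S_6$, namely the least common multiple of its cycle lengths; enumerating the eleven cycle types $1^6,\,2\,1^4,\,2^2 1^2,\,2^3,\,3\,1^3,\,3^2,\,4\,1^2,\,4\cdot 2,\,5\cdot 1,\,6,\,3\cdot 2\cdot 1$ on six letters yields precisely the orders $1,2,3,4,5,6$, which gives the first assertion.

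For the characteristic polynomials I would use the filtration $L\subseteq U^0\subseteq U=\F_2^6$ of $\F_2[S_6]$-modules from the construction preceding \eqref{eqn:S6GSp2}, whose successive subquotients are $Z=U^0/L$, the trivial module $L\cong\F_2$, and the trivial module $U/U^0\cong\F_2$. The characteristic polynomial of a single linear operator is multiplicative along short exact sequences of modules (a basis adapted to an invariant subspace puts the operator in block upper-triangular form), so for $\sigma\in S_6$ with cycle lengths $c_1,c_2,\dots$ we obtain
\[ \det\bigl(xI-\iota(\sigma)\bigr)=\frac{\det(xI-\sigma\mid U)}{(x-1)^2}=\frac{\prod_i (x^{c_i}-1)}{(x-1)^2}\in\F_2[x], \]
using that the characteristic polynomial of a permutation matrix on $U$ is $\prod_i(x^{c_i}-1)$.

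It then remains to evaluate the right-hand side over $\F_2$ for each of the eleven cycle types, via the factorizations $x^2-1=(x+1)^2$, $x^3-1=(x+1)(x^2+x+1)$, $x^4-1=(x+1)^4$, $x^5-1=(x+1)(x^4+x^3+x^2+x+1)$, $x^6-1=(x+1)^2(x^2+x+1)^2$ together with the Frobenius identities $(x+1)^4=x^4+1$, $(x+1)^2(x^2+x+1)=x^4+x^3+x+1$, $(x^2+x+1)^2=x^4+x^2+1$. The six cycle types all of whose lengths are powers of $2$ give $(x+1)^6/(x+1)^2=x^4+1$ (orders $1,2,4$); the types $3\,1^3$ and $3\cdot 2\cdot 1$ give $x^4+x^3+x+1$ while $3^2$ and $6$ give $x^4+x^2+1$ (orders $3,6$); and $5\cdot 1$ gives $x^4+x^3+x^2+x+1$ (order $5$), this last being the fifth cyclotomic polynomial, irreducible over $\F_2$ since $2$ has multiplicative order $4$ modulo $5$. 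These outcomes match the table.

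I expect no serious obstacle here: the argument is a short finite verification. The only points needing care are the multiplicativity of the characteristic polynomial along the possibly non-split filtration $L\subseteq U^0\subseteq U$, which is valid for any single operator, and the handful of polynomial identities in $\F_2[x]$, both routine. As an alternative to the module-theoretic bookkeeping, one could instead compute powers and conjugates of the two explicit generating matrices displayed in \eqref{eqn:S6GSp2} directly.
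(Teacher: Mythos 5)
Your proof is correct, and the paper itself offers no proof beyond the remark that the lemma ``follows from straightforward computation,'' so your argument is a clean, explicit way to carry out exactly that computation. The key observation—that the characteristic polynomial of $\iota(\sigma)$ on $Z=U^0/L$ equals $\prod_i(x^{c_i}-1)/(x-1)^2$ because the filtration $L\subseteq U^0\subseteq U$ has two trivial one-dimensional subquotients—reduces the lemma to the eleven cycle types, and your $\F_2[x]$ factorizations are all right. Two small points of care that you handled correctly: the multiplicativity of the characteristic polynomial along a filtration needs no splitting (block upper-triangular form for a single operator suffices), and the conclusion depends on the specific $S_6$-module $Z$ used in \eqref{eqn:S6GSp2}, which is indeed the standard $4$-dimensional symplectic module realized as $U^0/L$. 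Nothing further is needed.
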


There is a unique outer automorphism of $S_6$ up to inner automorphisms  \cite{HMSV}; it sends transpositions to products of three transpositions, and interchanges the trace of some order $3$ and order $6$ elements.

\begin{lem}
There are, up to inner automorphism, exactly $9$ subgroups of $\Sp_4(\F_2) \simeq S_6$ with absolutely irreducible image. They are listed in the following table with a property that determines them uniquely (where `$-$' indicates there is a unique conjugacy class of subgroup with that order):  

\begin{equation} \label{table:subgroups}
\begin{array}{c|c|cc}
\textup{Subgroup} & \textup{Order}	&  \textup{Element orders} & \textup{Distinguishing property} \\
\hline	\hline
\rule{0pt}{2.5ex}   S_6 & 720 & 1,\dots, 6 & - \\
A_6 & 360 & 1,\dots, 5 & - \\
S_5(a) & 120 & 1,\dots, 6 & \textup{Elements of order $3,6$ have trace $0$} \\
S_5(b) & 120 & 1,\dots, 6 & \textup{Elements of order $3,6$ have trace $1$} \\
S_3 \wr S_2 & 72 & 1,2,3,4,6 & - \\
A_5(b) & 60 & 1,2,3,5 & \textup{Elements of order $3$ have trace $1$} \\
C_3^2 \rtimes C_4 & 36 & 1,2,3,4 & \textup{No elements of order $6$} \\
S_3(a)^2 & 36 & 1,2,3,6 & \textup{Elements of order $6$ have trace $0$} \\
C_5 \rtimes C_4 & 20 & 1,2,4,5 & -
\end{array}
\end{equation}
\end{lem}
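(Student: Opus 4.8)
The plan is to transport the question to $S_6$ via the isomorphism $\iota$ of \eqref{eqn:S6GSp2} and then run a finite classification. A subgroup $H \leq \Sp_4(\F_2)$ has absolutely irreducible image exactly when the $4$-dimensional $\F_2[H]$-module $Z = U^0/L$ from Section~\ref{sec:symperm} stays simple after extension of scalars to $\overline{\F}_2$; over the finite field $\F_2$ this is equivalent to $Z|_H$ being simple \emph{and} $\End_{\F_2[H]}(Z) = \F_2$, and, by the Jacobson density theorem, equivalent to the $\F_2$-span of $\{\iota(h) : h \in H\}$ being all of $\M_4(\F_2)$. I would record at the outset that this forces $\#H \geq 16$ (the $\iota(h)$ are distinct nonzero matrices) and, more importantly, that over $\F_2$ simplicity of $Z|_H$ alone is \emph{not} enough; this gap is what makes one of the cases below subtle.

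Next I would enumerate the conjugacy classes of subgroups of $S_6$. There are finitely many and the lattice is small; one builds it by descending from the maximal subgroups --- $A_6$, the two classes of $S_5$ (the intransitive point stabilizer and its image under the outer automorphism of $S_6$), $S_4 \times S_2$, $S_2 \wr S_3$, and $S_3 \wr S_2$ --- or reads it off a computer algebra system. Throwing away the classes of order $<16$, and among the rest those for which $Z|_H$ has a proper submodule or is simple with $\End_{\F_2[H]}(Z) \neq \F_2$, and then verifying the span criterion on a representative of each surviving class, leaves precisely the nine classes displayed in \eqref{table:subgroups}; the element orders tabulated there drop out of the same computation. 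This is the ``straightforward computation.''

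That the listed properties pin each class down among all subgroups of $S_6$ of the given order is then a matter of computing traces on $Z$. From the filtration $L \subset U^0 \subset U$, in which both $L$ and $U/U^0$ are the trivial one-dimensional module, additivity of the trace gives $\tr_Z(\sigma) \equiv \#\mathrm{Fix}(\sigma) \pmod 2$ for $\sigma \in S_6$, where $\#\mathrm{Fix}(\sigma)$ is the number of fixed points of $\sigma$ on the six letters. Thus an element of order $3$ or $6$ has trace $0$ or $1$ according as its cycle type has an even or an odd number of fixed points (types $3^2$ and $6^1$ versus $3^1 1^3$ and $3^1 2^1 1^1$), which separates the two classes of $S_5$ (the point stabilizer contains $3$-cycles of type $3^1 1^3$, so it is $S_5(b)$, while the transitive $S_5 \cong \PGL_2(\F_5)$ has order-$3$ elements of type $3^2$, so it is $S_5(a)$), the two classes of order $36$, and the two classes of $A_5$. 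The $A_5$ case also explains the asymmetry in the table: restricting $Z$ to the point-stabilizer $A_5$ yields the unique absolutely irreducible $4$-dimensional $\F_2[A_5]$-module, whereas restricting to the transitive $A_5$ yields a $4$-dimensional simple $\F_2[A_5]$-module with endomorphism field $\F_4$, which is not absolutely irreducible --- so only one of the two $A_5$'s makes the list. Finally, comparing $\tr_Z$ with the characteristic polynomials of the previous lemma confirms that the outer automorphism of $S_6$ interchanges the traces of certain order-$3$ and order-$6$ elements, as stated above.

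I expect the real work to be organizational rather than deep: keeping the finite classification honest, and --- since absolute irreducibility of $Z|_H$ is invariant under $\mathrm{Inn}(S_6)$ but \emph{not} under the outer automorphism (which carries $Z$ to the other $4$-dimensional $S_6$-module) --- being careful that subgroups swapped by that automorphism, namely the two $S_5$'s and the two $A_5$'s, are kept apart and distinguished by a genuine invariant, the trace on $Z$, rather than conflated. The second thing to watch is that over $\F_2$ simplicity of $Z|_H$ does not imply absolute irreducibility; this is exactly the point that excludes the transitive $A_5$ and keeps the count at nine rather than ten.
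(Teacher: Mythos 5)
Your proposal is correct, and it supplies the argument that the paper elides: the paper's ``proof'' of this lemma is just the remark that it ``follows from straightforward computation,'' with no details. Your organization is the natural way to make that computation honest: transport to $S_6$ via $\iota$, characterize absolute irreducibility by the Burnside/density span criterion (equivalently, $Z|_H$ simple with $\End_{\F_2[H]}(Z)=\F_2$), enumerate conjugacy classes of subgroups downward from the maximals, and read off traces from $\tr_Z(\sigma)\equiv\#\mathrm{Fix}(\sigma)\pmod 2$ via the filtration $L\subset U^0\subset U$. Two points you flag are exactly the ones worth flagging: that $S_6$ carries \emph{two} inequivalent $4$-dimensional $\F_2$-modules exchanged by the outer automorphism (so absolute irreducibility of $Z|_H$ is an $\Inn(S_6)$-invariant but not an $\Aut(S_6)$-invariant, forcing the $S_5(a)/S_5(b)$ and $S_3(a)^2$ distinctions in the table to be made by a genuine invariant, the trace); and that over $\F_2$ simplicity does not imply absolute irreducibility, which is what excludes the transitive $A_5$ (where $Z|_{A_5(a)}$ is the restriction of scalars of the natural $\F_4$-module for $\SL_2(\F_4)$, with endomorphism field $\F_4$) and keeps the count at nine rather than ten. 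The trace values you compute from fixed-point counts agree with the table, so the proof goes through.
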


\begin{example}
The conjugacy classes of subgroups $S_5(a),S_5(b) \leq S_6$ are exchanged by the outer automorphism of $S_6$.  For example, under the restriction of \eqref{eqn:S6GSp2}, we have 
\begin{equation} \label{eqn:S5GSp2}
\begin{aligned}
\iota\colon S_5(b) &\to \Sp_4(\F_2) \\
(1\ 2\ 3\ 4\ 5),(1\ 2),(1\ 2\ 3) &\mapsto \begin{psmallmatrix}1&0&0&1\\0&0&1&0\\1&1&0&1\\1&0&1&0 \end{psmallmatrix}, \begin{psmallmatrix}1&0&0&1\\ 0&1&0&0\\0&0&1&0\\0&0&0&1 \end{psmallmatrix},\begin{psmallmatrix}1&0&0&1\\0&1&0&0\\1&1&1&1\\1&1&0&0\end{psmallmatrix}.
\end{aligned} 
\end{equation}
Another way to distinguish $S_5(a)$ from $S_5(b)$ is that $\iota(S_5(b))$ has transvections while $\iota(S_5(a))$ does not.   
\end{example}

\begin{example}
There is a subgroup $A_5(a) \leq S_6$ that is similarly exchanged with $A_5(b)$ but that is not absolutely irreducible.  
\end{example}

\subsection{Images and discriminants}

For the purposes of establishing the first typical cases of the paramodular conjecture, we observe the following.

\begin{lem} \label{lem:s5s6s3s2}
Suppose $N$ is odd and squarefree and let $A$ be an abelian surface over $\Q$ of conductor $N$ equipped with a polarization of odd degree.  Then the residual representation 
\[ \overline{\rho}_{A,2}\colon \Gal_{\Q,S} \to \GSp_4(\F_2) \] 
(where $S=\{p: p \mid N\} \cup \{\ell,\infty\}$) is absolutely irreducible if and only if its image is isomorphic to $S_5(b)$, $S_6$, or $S_3 \wr S_2$. 
\end{lem}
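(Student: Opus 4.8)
The reverse implication is immediate: by \eqref{table:subgroups} each of $S_5(b)$, $S_6$, $S_3\wr S_2$ is an absolutely irreducible subgroup of $\GSp_4(\F_2)$, so if $\img\overline{\rho}_{A,2}$ is $\GSp_4(\F_2)$-conjugate to one of these then $\overline{\rho}_{A,2}$ is absolutely irreducible.  For the forward implication: the mod $2$ cyclotomic character is trivial, so $\GSp_4(\F_2)=\Sp_4(\F_2)\simeq S_6$; since the polarization has odd degree the Weil pairing induces a perfect alternating form on $A[2]$, so $G\colonequals\img\overline{\rho}_{A,2}$ is a subgroup of $\Sp_4(\F_2)$; and if $\overline{\rho}_{A,2}$ is absolutely irreducible then $G$ is conjugate to one of the nine subgroups of \eqref{table:subgroups}.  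The task is to exclude $A_6$, $S_5(a)$, $A_5(b)$, $C_3^2\rtimes C_4$, $S_3(a)^2$, and $C_5\rtimes C_4$.

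The engine is a ramification dictionary for $K\colonequals\Q(A[2])$, which is unramified outside $2N$.  At an odd prime $p\mid N$, squarefreeness of $N$ forces semistable reduction at $p$ with one-dimensional toric part, so wild inertia acts trivially on $A[2]$ and the Grothendieck monodromy operator has rank at most $1$; hence the image in $G$ of an inertia group $I_p$ is either trivial or generated by a single symplectic transvection.  Under $\Sp_4(\F_2)\simeq S_6$ the transvections are exactly the transpositions, so the normal closure $H\trianglelefteq G$ of the images of all $I_p$ ($p\mid N$) lies in the normal subgroup $T\le G$ generated by the transvections of $G$; consequently $K^H\supseteq\Q$ is Galois with group $G/H$, unramified outside $2$, and $G/T$ is a quotient of $G/H$.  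At $p=2$, oddness of $N$ gives good reduction, so $\overline{\rho}_{A,2}$ is finite flat at $2$; the resulting restriction on ramification at $2$ is Proposition~\ref{prop:discbound}.

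To finish, one checks against \eqref{table:subgroups} (using the example recording that $S_5(b)$ has transvections while $S_5(a)$ does not) that each of the six groups contains \emph{no} transvection: $A_6$ and $A_5(b)$ lie in $A_6$; the involutions of $S_5(a)\cong\PGL_2(\F_5)$, of $C_5\rtimes C_4\cong F_{20}$, and of $C_3^2\rtimes C_4$ are double or triple transpositions in the relevant embeddings; and $S_3(a)^2$ is the absolutely irreducible tensor realization of $S_3\times S_3$, in which every nontrivial element fixes a subspace of dimension at most $2$.  So for each of the six, $T=H=1$, and $K/\Q$ would itself be an extension unramified outside $2$ with Galois group one of $A_6$, $S_5$, $A_5$, $S_3\times S_3$, $F_{20}$, or $C_3^2\rtimes C_4$.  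Now $A_6$, $S_5$, $A_5$ are precluded by applying Odlyzko's unconditional discriminant lower bounds to $K$ together with the discriminant estimate of Proposition~\ref{prop:discbound}; $S_3\times S_3$ is precluded since it would force a non-Galois cubic subfield unramified outside $2$, impossible by Minkowski's bound; and $C_3^2\rtimes C_4$, $C_5\rtimes C_4$ are precluded since each would force, over its $C_4$-subfield $F$, an abelian extension of degree divisible by $3$, resp.\ $5$, unramified outside $2$ --- but the finitely many such $F$ are totally ramified at $2$ with residue field $\F_2$ and have trivial class group, so class field theory forces all their abelian extensions unramified outside $2$ to have $2$-power degree.  (By contrast, the transvections of $S_6$ and of $S_5(b)$ generate the whole group, and those of $S_3\wr S_2$ generate $S_3\times S_3$ with quotient $C_2$, realized unramified outside $2$, e.g.\ by $\Q(\sqrt{-1})$; so the three surviving groups are consistent.)  This proves the claim.

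The main obstacle is Proposition~\ref{prop:discbound}: extracting from good reduction at $2$ --- via Fontaine-type bounds for finite flat group schemes killed by $2$ --- a discriminant estimate for $\Q_2(A[2])/\Q_2$ strong enough that, against Odlyzko's bounds, the non-solvable groups $A_5$, $S_5$, $A_6$ are excluded as Galois groups of extensions of $\Q$ unramified outside $2$.  The odd-prime half of the dictionary (inertia acting by transvections) and the group-theoretic bookkeeping with \eqref{table:subgroups} are routine by comparison.
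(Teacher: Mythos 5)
Your approach is genuinely different from the paper's. The paper simply cites Brumer--Kramer \cite[\S 7.3]{BK14} for the reduction of the absolutely irreducible image to the abstract groups $S_5$, $S_6$, $S_3\wr S_2$, and then uses a one-line transvection argument (since $p\parallel N$ for some $p$ forces a transvection in the image) to pin down $S_5(b)$ rather than $S_5(a)$. You instead try to reconstruct the whole Brumer--Kramer argument from scratch: the odd-prime inertia dictionary (transvections or nothing), the observation that six of the nine absolutely irreducible groups contain no transvection and hence would produce $\Q(A[2])$ unramified outside $2$, an elementary class-field-theory argument for the solvable cases, and a discriminant/Odlyzko argument for the nonsolvable ones. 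This is a legitimate line of attack --- and actually avoids the slightly delicate $S_5(a)$-vs-$S_5(b)$ dichotomy --- but it is much heavier than what the paper does, precisely because the paper outsources the hard step.

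There is, however, a genuine gap at that hard step, and you acknowledge it yourself but do not resolve it. You write that the restriction on ramification at $2$ is Proposition \ref{prop:discbound}, but that proposition is about the valuation $\ord_p d_{K_0}$ at an \emph{odd} prime $p\parallel N$; it says nothing about $p=2$. The bound you actually need --- a Fontaine-type estimate on $\mathrm{disc}(\Q_2(A[2])/\Q_2)$ coming from $A[2]$ being a finite flat group scheme over $\Z_2$, combined with the unconditional Odlyzko lower bounds, sharp enough to preclude $A_5$, $S_5$, and $A_6$ as Galois groups of extensions of $\Q$ unramified outside $2$ (with the resulting root discriminant bound) --- is nowhere in this paper, and you do not carry out the computation. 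Without it the nonsolvable cases $A_5(b)$, $S_5(a)$, $A_6$ are not excluded, and the lemma is not proved. The correct fix is either to supply that $2$-adic discriminant estimate and the Odlyzko check in detail, or to do what the paper does and cite Brumer--Kramer \cite[\S 7.3]{BK14}, where this analysis is carried out. (Your treatment of the solvable exclusions $S_3(a)^2$, $C_3^2\rtimes C_4$, $C_5\rtimes C_4$ via transvection-freeness, Minkowski, and class field theory over the two $C_4$-fields unramified outside $2$ is fine; the transvection claims for each of the six groups should be checked against the explicit embeddings, but they are correct.)
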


\begin{proof}
By work of Brumer--Kramer \cite[\S 7.3]{BK14}, whenever $N$ is not a square, the image is either $S_5$, $S_6$, or $S_3 \wr S_2$.  To force $S_5(b)$, it suffices that there is a prime $p \mid N$ such that $A_p$ has toroidal dimension one (i.e., $p \parallel N$) and that $p$ be ramified in $\Q(A[2])$.  If $A$ is semistable and the Galois group is $S_5(a)$, then the toroidal dimension at the bad primes is $2$ since there are no transvections.
\end{proof}

\begin{rmk}
In general, if $A[2]$ is absolutely irreducible, then the degree of any \emph{minimal} polarization on $A$ is odd.
\end{rmk}

Next, we convert the upper bound from Lemma~\ref{lem:fixedfieldordp} on the conductor into an upper bound on the discriminant.  We first recall the following standard result.

\begin{lemma} \label{lem:aQx}
  Let $a(x) \in \Q[x]$ be irreducible and let $\Omega$ be the set of roots of $a(x)$ in $\Qbar$.  Let $\alpha \in \Omega$, let $K_0 = \Q(\alpha)$, and let $K$ be the normal closure of $K_0$.  Let $\frakp$ be a prime of $K$ that is tamely ramified in the extension $K \supseteq \Q$, and let $p \in \Z$ be the prime lying below $\frakp$.  Finally, let $I_\frakp \leq \Gal(K \,|\, \Q)$ denote the inertia group at $\frakp$.  Then 
  \[ \ord_p(d_{K_0}) = \deg a(x) - \#\Omega/I_\frakp \] 
  where $\#\Omega/I_\frakp$ denotes the number of orbits of $I_\frakp$ acting on $\Omega$.  
\end{lemma}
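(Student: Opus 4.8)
The plan is to reduce $\ord_p(d_{K_0})$ to an orbit count for the inertia group acting on $\Omega$, using the standard dictionary between primes in a subfield and cosets in the Galois group of its normal closure.

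First I would record the reduction to the tame different formula. Since $K\supseteq\Q$ is Galois, all primes above $p$ are $\Gal(K\,|\,\Q)$-conjugate, so tameness of $\frakp$ means exactly that $p\nmid e(\frakp/p)$; for any prime $\frakq$ of $K_0$ below $\frakp$ we have $e(\frakq/p)\mid e(\frakp/p)$, hence $p\nmid e(\frakq/p)$, and as residue extensions over $\F_p$ are separable, $\frakq$ is tamely ramified over $p$. Therefore the $\frakq$-exponent of the different is $e(\frakq/p)-1$, and taking norms gives
\[ \ord_p(d_{K_0}) = \sum_{\frakq\mid p} f(\frakq/p)\bigl(e(\frakq/p)-1\bigr) = \deg a(x) - \sum_{\frakq\mid p} f(\frakq/p), \]
using $\sum_{\frakq\mid p} e(\frakq/p)f(\frakq/p) = [K_0:\Q]=\deg a(x)$. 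So it remains to show $\sum_{\frakq\mid p} f(\frakq/p) = \#\Omega/I_\frakp$.

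Next I would set up the group action. Let $G=\Gal(K\,|\,\Q)$ and $H=\Gal(K\,|\,K_0)=\operatorname{Stab}_G(\alpha)$; since $a(x)$ is irreducible, $G$ acts transitively on $\Omega$, and the $G$-set $\Omega$ is isomorphic to $G/H$. Fix the prime $\frakp$ with decomposition group $D=D_\frakp$ and inertia group $I=I_\frakp\trianglelefteq D$. The primes $\frakq$ of $K_0$ above $p$ are in bijection with the orbits of $D$ on $\Omega$, and the orbit $\mathcal O_\frakq$ attached to $\frakq$ has $\#\mathcal O_\frakq = e(\frakq/p)f(\frakq/p)$; this is classical (one may cite Neukirch, or base-change to $\Q_p$ and let $\Gal(\overline\Q_p\,|\,\Q_p)$ act on the roots of $a$, which makes the orbit sizes transparent).

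The key step is then the local computation that inside each $D$-orbit $\mathcal O_\frakq$ every $I$-orbit has size exactly $e(\frakq/p)$. To see this, choose (all primes of $K$ over $p$ being $G$-conjugate) the representative $\frakp$ so that $\frakp\cap K_0=\frakq$, and take $x\in\Omega$ to be the point fixed by $H$, i.e.\ $\alpha$ itself. Then $\operatorname{Stab}_I(x)=I\cap H$ is precisely the inertia subgroup of $\frakp$ in the Galois extension $K\,|\,K_0$, of order $e(\frakp/\frakq)$, so the $I$-orbit of $x$ has size $|I|/e(\frakp/\frakq)=e(\frakp/p)/e(\frakp/\frakq)=e(\frakq/p)$; since $I\trianglelefteq D$, all $I$-orbits inside $\mathcal O_\frakq$ share this size. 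Hence the number of $I$-orbits inside $\mathcal O_\frakq$ equals $\#\mathcal O_\frakq/e(\frakq/p)=f(\frakq/p)$, and summing over $\frakq\mid p$ shows the total number of $I$-orbits on $\Omega$ is $\sum_{\frakq\mid p} f(\frakq/p)$, as required. I expect the only real friction to be bookkeeping — keeping left/right coset conventions consistent and pinning down the ``primes of $K_0\leftrightarrow D$-orbits on $\Omega$ of size $ef$'' dictionary cleanly; there is no deeper obstacle, the substance being the tame different formula together with orbit counting.
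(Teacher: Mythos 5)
The paper states this lemma without proof, labeling it a ``standard result,'' so there is no in-paper argument to compare against; what matters is whether your argument stands on its own, and it does. The reduction to $\ord_p(d_{K_0}) = \deg a - \sum_{\frakq \mid p} f(\frakq/p)$ via the tame different formula is correct (with tameness of all $\frakq \mid p$ in $K_0$ following from $p \nmid e(\frakp/p)$ and the fact that every $e(\frakq/p)$ divides the common ramification index of the primes of $K$ over $p$), and the identification of $\sum_{\frakq} f(\frakq/p)$ with $\#\Omega/I_\frakp$ via the $D$-orbit $\leftrightarrow$ prime dictionary and the normality of $I$ in $D$ is the standard double-coset computation.

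One point worth tightening, which is exactly the ``bookkeeping friction'' you anticipate: when you say ``choose the representative $\frakp$ so that $\frakp \cap K_0 = \frakq$,'' you are implicitly replacing the fixed $\frakp$ of the lemma by a $G$-conjugate, and hence replacing $D$, $I$, and the orbit decomposition $\{\mathcal O_\frakq\}$ all at once. This is harmless --- conjugating $\frakp$ by $g$ conjugates $D$ and $I$ by $g$ and carries $\mathcal O_\frakq$ to the $gDg^{-1}$-orbit assigned to the same $\frakq$, so the $I$-orbit count within each $\mathcal O_\frakq$ is unchanged --- but it is cleaner either to insert a sentence to this effect, or to avoid the move entirely: for the fixed $\frakp$, pick any $x = gH \in \mathcal O_\frakq$, observe that $\operatorname{Stab}_I(x) = I \cap gHg^{-1}$ is ($g$-conjugate to) the inertia of $g^{-1}\frakp$ in $K/K_0$, which has order $e(g^{-1}\frakp/\frakq)$; then $|I\cdot x| = e(\frakp/p)/e(g^{-1}\frakp/\frakq) = e(\frakq/p)$ and the computation proceeds with no change of $\frakp$. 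You should also make explicit that $\alpha$ does lie in the orbit you assign to $\frakp\cap K_0$, which is where the $D$-orbit/prime dictionary with $g = e$ is being invoked. These are expository, not mathematical, gaps.
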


We now specialize to our case of interest.
\begin{prop} \label{prop:discbound} Let $p \parallel N$ be odd.  Let
  $K$ be the fixed field of $\ker \overline{\rho}_{f,2}$.  
  \begin{enumalph}
  \item If
  $\Gal(K\,|\,\Q) \simeq S_3 \wr S_2$ (resp., $S_m$ with $m=5,6$), then
  $K$ is the normal closure of a field $K_0$ of degree $6$ (respectively, $m$)
  with $\ord_p d_{K_0} \leq 1$. 
  \item If
  $\Gal(K\,|\,\Q) \simeq A_m$, with $m=5,6$, then 
  $K$ is the normal closure of a field $K_0$ of degree $m$
  with $p$ unramified in $K_0$ (i.e., $\ord_p d_{K_0}=0$).
  \end{enumalph}
\end{prop}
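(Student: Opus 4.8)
The plan is to derive Proposition~\ref{prop:discbound} from Lemma~\ref{lem:fixedfieldordp} together with the tame-ramification formula of Lemma~\ref{lem:aQx}. First I would record the setup: $\overline{\rho}_{f,2}$ is unramified outside $2N$ by Theorem~\ref{thm:galoisrep}(iii), and $p \parallel N$ is odd, so $p$ is tamely ramified in $K$ (the residue characteristic $p$ is odd while the inertia at $p$ injects into $\GSp_4(\F_2)$, a group of order prime to $p$ once $p > 5$; the small cases $p = 3, 5$ need to be checked separately, but there the relevant inertia in the listed groups $S_m$, $A_m$, $S_3 \wr S_2$ still has order prime to $p$ because $p \parallel N$ forces $\overline{\rho}_{f,2}|_{I_p}$ to be the image of a Steinberg type local representation, which is generated by a single transvection and hence is cyclic of order dividing $2$). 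The key input is that Lemma~\ref{lem:fixedfieldordp} bounds the Artin conductor of $\overline{\rho}_{f,2}$ at $p$: $\ord_p(\cond(\overline{\rho}_{f,2})) \leq 1$, which for a tamely ramified representation means the inertia group $I_p$ acts on the underlying $\F_2^4$ either trivially (conductor exponent $0$, i.e.\ $p$ unramified in $K$) or with a one-dimensional space of coinvariants complementary to a three-dimensional fixed space (conductor exponent $1$, i.e.\ $I_p$ is generated by a single transvection).

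Next I would translate this into the orbit count appearing in Lemma~\ref{lem:aQx}. In each case we realize $K$ as the Galois closure of a degree-$n$ field $K_0 = \Q(\alpha)$ where $G = \Gal(K\,|\,\Q)$ is given as a transitive permutation group on the $n = \#\Omega$ roots: for $G \simeq S_m$ or $A_m$ we take the standard degree-$m$ permutation representation, and for $G \simeq S_3 \wr S_2$ we take a faithful transitive degree-$6$ representation (the imprimitive action on $\{1,\dots,6\}$ with two blocks of size $3$). Then Lemma~\ref{lem:aQx} gives $\ord_p(d_{K_0}) = n - \#(\Omega/I_p)$, so it suffices to show that a nontrivial element generating $I_p$ has exactly $n-1$ orbits on $\Omega$ — equivalently, is a transposition (in the $S_m$ case), a double transposition fixing a point would give $n-2$ orbits so must be excluded — and in the $A_m$ case that $I_p$ must be trivial. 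This is where the constraint from Lemma~\ref{lem:fixedfieldordp} does the work: the generator of $I_p$ corresponds, under $\iota$ (or its restriction), to an element of $\GSp_4(\F_2)$ whose fixed space on $\F_2^4$ has dimension $3$, i.e.\ a transvection. Using the explicit isomorphism \eqref{eqn:S6GSp2}, one checks which permutation-group elements map to transvections in $\Sp_4(\F_2)$: for $S_6$ the transpositions do and the products of three disjoint transpositions do not (they fix only a smaller space); restricting to $S_5(b)$ the transpositions again map to transvections; for $S_3\wr S_2$ one identifies the relevant conjugacy class of order $2$; and for $A_m$ there are no transvections at all in the image since $A_m$ contains no transpositions — indeed $\overline{\rho}_{f,2}|_{I_p}$ nontrivial and tame of $2$-power order inside $A_m$ would have to be generated by a double transposition, which has a $2$-dimensional fixed space, contradicting the conductor bound — forcing $I_p$ trivial and $p$ unramified in $K_0$.

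Concretely the argument runs: (1) $p$ tame in $K$; (2) by Lemma~\ref{lem:fixedfieldordp}, $I_p$ is generated by a transvection or is trivial; (3) for $G = S_m$ or $S_3\wr S_2$ a transvection pulls back to a transposition (resp.\ the order-$2$ class fixing the block structure appropriately) which in the chosen degree-$n$ permutation representation fixes exactly $n - 2$ of the $\Omega$... wait, a transposition on $m$ letters has $m-1$ orbits, so $\ord_p d_{K_0} = m - (m-1) = 1$; for $S_3 \wr S_2$ in the degree-$6$ representation the relevant involution likewise has $5$ orbits, giving $\ord_p d_{K_0} \leq 1$; (4) for $G = A_m$, no element of order a power of $2$ in $A_m$ is a transvection in $\GSp_4(\F_2)$ (double transpositions have $2$-dimensional fixed space), so $I_p$ is trivial, hence $p$ is unramified in $K$ and a fortiori in $K_0$. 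I expect the main obstacle to be the small-prime cases $p = 3$ and $p = 5$, where tameness is not automatic from the order of $\GSp_4(\F_2)$ alone and one must invoke the Steinberg description at $p \parallel N$ from the proof of Lemma~\ref{lem:fixedfieldordp} (the local representation is a twist of Steinberg, whose inertial image is unipotent of order $2$, hence tame and cyclic) — this is the step where one genuinely uses the structure of $\rho_{f,\ell}$ at $p$ rather than just group theory, and it should be stated carefully. The remaining verifications — identifying transvections in $\iota(S_6)$, $\iota(S_5(b))$, and the degree-$6$ image of $S_3\wr S_2$, and confirming that the chosen primitive element fields $K_0$ have Galois closure $K$ — are routine computations with \eqref{eqn:S6GSp2} and elementary permutation-group combinatorics.
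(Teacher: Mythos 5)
Your proposal is correct and ultimately proves the same statement by invoking the same two inputs (Lemma~\ref{lem:fixedfieldordp} and Lemma~\ref{lem:aQx}), but it routes the group-theoretic step differently from the paper, and the difference is worth noting. The paper does not commit to a computation with the specific $\iota$ of \eqref{eqn:S6GSp2}; instead it observes that a transvection pulls back under $\GSp_4(\F_2)\simeq S_6$ to cycle type \emph{either} $2{+}1{+}1{+}1{+}1$ \emph{or} $2{+}2{+}2$, depending on which of the two inequivalent degree-$6$ permutation representations one uses, and that these are exchanged by the outer automorphism of $S_6$. The paper then appeals to the fact that the conjugacy classes of subgroups $S_6$ and $S_3\wr S_2$ in $S_6$ are invariant under the outer automorphism, so one is free to \emph{choose} the permutation representation (equivalently, the core-free subgroup $H$ defining $K_0$) in which the transvection is a transposition; in the $S_5$ case only one option is available and the paper notes it directly gives $2{+}1{+}1{+}1$. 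Your argument instead fixes $\iota$ once and for all, checks by hand that under this $\iota$ the transvections of $\Sp_4(\F_2)$ are exactly the $15$ transpositions of $S_6$ (while $2{+}2$ and $2{+}2{+}2$ elements have $2$-dimensional fixed space), and then uses the $\iota$-induced degree-$n$ permutation representation to produce $K_0$ directly. Both approaches are valid. Yours is more computational and makes a concrete positive claim that the paper sidesteps with the choice argument; the paper's is slicker but leaves the cycle-type dichotomy a bit opaque. You are also more explicit about tameness — the paper implicitly gets this from the Steinberg description of $\overline{\rho}_{f,2}|_{I_p}$ in the proof of Lemma~\ref{lem:fixedfieldordp} (inertia generated by a unipotent element of order $2$, hence prime to the odd $p$), whereas you lay this out carefully, which is welcome. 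Two small presentational caveats: the phrase ``the order-$2$ class fixing the block structure appropriately'' for $S_3\wr S_2$ is vague — the point is simply that the inertia generator, being a transvection, must be a transposition \emph{within} a block, hence has cycle type $2{+}1^4$ and $5$ orbits; and the claim that transvections are transpositions (not triple transpositions) under \eqref{eqn:S6GSp2} should be verified or cited, as your argument depends on it in a way the paper's does not.
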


\begin{proof}
Decomposing the Weil--Deligne representation at $p$, we see by Lemma~\ref{lem:fixedfieldordp} that the image of inertia is either trivial or a $2 \times 2$-Jordan block.  If trivial, the extension is unramified and the result holds, so suppose we are in the latter case.
 Under the isomorphism $\GSp_4(\F_2) \simeq S_6$ above
\eqref{eqn:S6GSp2}, nontrivial elements of this Jordan block
correspond to cycle decomposition $2+2+2$ or $2+1+1+1+1$, and these
are exchanged by an outer automorphism. 

For (a), by a faithful permutation representation on the cosets of a core-free subgroup, a field $K_0$ of the given degree exists. If the residual image inside $S_6$ is invariant under such an automorphism (which holds for $S_6$ and $S_3 \wr S_2$), then we can choose our
subfield $K_0$ corresponding to the latter case, and conclude
$\ord_p d_{K_0} \leq 1$ by Lemma \ref{lem:aQx}. 
If $\Gal(K\,|\,\Q) \simeq S_5$, we have only the possibility $2+1+1+1$
again giving $\ord_p d_{K_0} \leq 1$.  

Finally, for (b) and the groups $A_5,A_6$, we find no possibilities and reach a contradiction, so we conclude that $K_0$ is unramified at $p$.
\end{proof}

\subsection{Core-free extensions and obstructing elements} \label{sec:corefreethings} 

We will compute all obstructing extensions $\varphi\colon \Gal(L \,| F) \hookrightarrow E$ extending $\overline{\rho}$ (Definition \ref{defn:extendingrho}); we represent $L \supseteq K \supseteq F$ by an exact core-free subextension $L_0 \supseteq K_0 \supseteq F$ (Definition \ref{def:corefreesubexte}) arising from an exact core-free subgroup $D \leq E$ which is as large as possible, to make the degree of the subextension as small as possible.  

For each $G$ in \eqref{table:subgroups}, we therefore first seek subgroups $\varphi\colon E \hookrightarrow \Liesp_4(\F_2) \rtimes G$ such that $\pi(E)=G$; such extensions are obstructing (Definition \ref{defn:obstructing}) if they have nonzero upper trace in the matrix realization \eqref{eqn:VGGL2}.

Consider first the case $G=S_5(b)$. 

\begin{thm} \label{thm:GcoreS5b}
For $G=S_5(b)$, there are exactly $10$ extension groups $E$ up to conjugacy in $\M_4(\F_2) \rtimes G$, with $\#V=[E:G]=2^k$ where $k=0,0,1,4,4,5,5,6,9,10$, respectively.

Furthermore, let
\[ H = D_6(b) \colonequals \langle (1\ 2),(1\ 3),(4\ 5) \rangle \leq
  G; \] then for all $E \not\simeq G$, there is an exact core-free subgroup $D \leq E$ of index $2$ such that $\pi(D)=H$ as in \eqref{eqn:WDHres}.
\end{thm}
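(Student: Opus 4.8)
The plan is to reduce Theorem~\ref{thm:GcoreS5b} to two finite computations with the $\F_2[G]$-module $M \colonequals \Liesp_4(\F_2) = \Lie^0(\GSp_4)(\F_2)$, on which $G = S_5(b)$ acts through $\ad\overline{\rho}$ (equivalently, by the adjoint action of $G \leq \Sp_4(\F_2)$ on its Lie algebra, so that $M$ is the symmetric square of the natural $4$-dimensional module), and then to repackage the outputs group-theoretically.

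\emph{Enumerating the $E$'s.} First I would parametrize them: a subgroup $E \leq \Liesp_4(\F_2)\rtimes G$ with $\pi(E) = G$ has $V \colonequals E \cap \Liesp_4(\F_2)$ a $G$-submodule of $M$ (it is normal in $E$), and $E/V \xrightarrow{\ \sim\ } G$, so $E$ is recorded by a class in $\cocyc^1(G, M/V)$. Conjugating $E$ by $\M_4(\F_2)\rtimes G$ changes this class by the coboundaries coming from all of $\M_4(\F_2)$ (exactly as in the computation \eqref{eqn:pmatrixaconj}, now with the conjugating matrix ranging over $\M_4(\F_2)$ rather than just $\Liesp_4(\F_2)$) together with the twisting action of $G$. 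So the classification reduces to: (i) list the $G$-submodules of $M$, whose submodule lattice is short, with submodules of $\F_2$-dimensions $0,1,4,5,6,9,10$; and (ii) for each such $V$, compute the orbit set of $\cocyc^1(G, M/V)$ under the action above. Both are small computations over $\F_2$ for the order-$120$ group $G$; carried out in \Magma{} they yield exactly the $10$ groups, with $\dim_{\F_2}V = 0,0,1,4,4,5,5,6,9,10$ (the two $\dim V = 0$ cases being the two $\M_4(\F_2)\rtimes G$-classes of complements to $\Liesp_4(\F_2)$).

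\emph{Producing the core-free $D$.} I would first note that the order-$12$ subgroup $H = D_6(b)$ is core-free in $G = S_5$: its core is a normal subgroup of $S_5$ contained in $H$, and $1, A_5, S_5$ are the only normal subgroups of $S_5$ while $A_5 \not\leq H$ for order reasons; hence $K^H$ is a field of degree $[G:H] = 10$ with Galois closure $K$ by Lemma~\ref{lem:iscorefree}. Fix now $E \not\simeq G$, so $V \neq 0$, and set $\widetilde{H} \colonequals \pi^{-1}(H)$, an extension $1 \to V \to \widetilde{H} \to H \to 1$. A subgroup $D \leq E$ with $\pi(D) = H$ and index $2$ in $\widetilde{H}$ (equivalently $[V : V\cap D] = 2$) is the same as an index-$2$ subgroup of $\widetilde{H}$ surjecting onto $H$; writing $W \colonequals V \cap D$, the subgroup $W$ is forced to be a codimension-$1$ $H$-submodule of $V$ (the $\widetilde{H}$-action on $V$ factors through $H$, and $W$ is $D$-stable), and then $D/W$ is a complement to $V/W \cong \F_2$ in the central extension $\widetilde{H}/W$ of $H$. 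So for a given $W$ such a $D$ exists precisely when the class of $\widetilde{H}/W$ vanishes in $\coho^2(H,\F_2)$, and one must exhibit a codimension-$1$ $H$-submodule $W \leq V$ for which this holds; that $V|_H$ admits a trivial quotient at all is checked case by case (e.g.\ for $V$ the natural module it comes from computing the $H$-fixed functionals). Granting such a $W$, the pair $D \leq E$ is exact because $\pi(D) = H$ is core-free, and $D$ is core-free in $E$ as soon as $W$ contains no nonzero $G$-submodule of $V$: indeed the core of $D$ in $E$ lies in $D \cap \ker\pi = W$ and, being normal in $E$ inside the abelian group $V$, is a $G$-submodule of $V$. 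For the $E$ splitting over $G$ this is immediate (take $D = W \rtimes H$); for the remaining ones one checks in \Magma{}, running over the codimension-$1$ $H$-submodules $W$, that the $\coho^2$-class dies and $W$ misses the $G$-submodules of $V$, confirming that $H = D_6(b)$ works uniformly for all nine nontrivial $E$.

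\emph{Main obstacle.} The delicate point is this uniformity: it is not clear a priori that one fixed core-free subgroup $H$, and index exactly $2$ in $\widetilde{H}$, serves all nine nontrivial $E$, since one needs simultaneously that $V|_H$ has a trivial quotient, that the resulting class in $\coho^2(H,\F_2)$ (which is nonzero in general) vanishes for a suitable hyperplane $W$, and that some such $W$ avoids the $G$-socle of $V$; a poorer choice of core-free $H$ — say one of smaller index in $S_5$ — can fail one of these and force $[\widetilde{H}:D] > 2$. The secondary subtlety, in the enumeration step, is that the right notion of equivalence is conjugacy by $\M_4(\F_2)\rtimes G$ — not by $\Liesp_4(\F_2)\rtimes G$, and not by all of $\GL_8(\F_2)$ — which is precisely the $\M_n$-versus-$\Liesp_4$ coboundary dichotomy already visible in Lemma~\ref{lem:iscocyc}; getting this right is what makes the count come out to $10$.
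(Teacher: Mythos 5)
Your proposal is correct and follows the same essential approach as the paper: the theorem is established by a finite verification in \Magma{}. The paper's proof is terse — it records only that there are $18$ conjugacy classes of such $E$ in $\Liesp_4(\F_2)\rtimes G$, falling into $10$ classes under $\M_4(\F_2)\rtimes G$-conjugacy, and that for each $E\not\simeq G$ some index-$2$ subgroup $W\leq V$ yields an exact core-free $D$ — whereas you supply the organizing scaffolding (the $G$-submodule lattice of $\Liesp_4(\F_2)$, the cocycle/$\M_n$-coboundary parametrization of complements, the $\coho^2(H,\F_2)$ obstruction for existence of $D$, and the characterization of core-freeness of $D$ via $W$ avoiding the $G$-socle of $V$), but the computational content being verified is the same. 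Two small cautions: the coboundaries "coming from all of $\M_4(\F_2)$" must of course be restricted to those $a\in\M_4(\F_2)$ for which $\sigma\mapsto a-\sigma_{\ad}(a)$ lands in $\Liesp_4(\F_2)$ (this is built into the paper's definition of $\cobound^1(F,\ad\rhobar;\M_n(\F_\ell))$, which you cite, but your phrasing elides it), and the phrase "of index $2$" in the theorem refers to $[V:W]=[\pi^{-1}(H):D]=2$ — which you have read correctly, not $[E:D]=2$.
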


\begin{proof}
This theorem is proven by explicit computation in \textsf{Magma} \cite{Magma}; the code is available online \cite{coderepo} together with the verbose output.  There are exactly $18$ conjugacy classes of subgroups
  $\varphi \colon E \hookrightarrow \Liesp_4(\F_2) \rtimes G$ with $\pi(E)=G$;
  these subgroups fall into $10$ conjugacy classes in
  $\M_4(\F_2) \rtimes G$.
Let $H = D_6(b) \colonequals \langle (1\ 2),(1\ 3),(4\ 5) \rangle \leq G$ be as in the statement. Then $H$ is dihedral of order $\#H=12$ and index $[G:H]=10$ and it can be verified that for each such
$E \not\simeq G$, there is at least one subgroup $W \leq V$ of index
$2$ such that $D \leq E$ is an exact core-free
subgroup.
\end{proof}

The somewhat complicated field diagram \eqref{eqn:fielddiagram} in our case simplifies to:
\begin{equation} \label{eqn:LL0S5b}
\begin{minipage}{\textwidth}
\xymatrix{
& & L \ar@{-}[d]_{2^k}^{V} \\
L_0 \ar@{-}[d]_{2} \ar@{-}[urr] & & K \ar@{-}[dll] \ar@{-}@/^0pc/[ddl]^{G}_{120} \\
K_0 \ar@{-}[dr]_{10} &  \\
& F
} 
\end{minipage}
\end{equation}
We understand the large extension $L \supseteq K \supseteq F$ as the Galois closure of the exact core-free subextension $L_0 \supseteq K_0 \supseteq F$, with $L_0 \supseteq K_0$ quadratic.  The extension $K_0$ is realized explicitly as follows: if $K \supseteq F$ is the splitting field of a quintic polynomial $f(x)$ with roots $\alpha_1,\dots,\alpha_5$ permuted by $S_5$, then $K_0=K^H=F(\alpha_4+\alpha_5)$.

In a similar way, we have the result for the remaining two groups.  

\begin{thm} \label{thm:Gcore} 
\ 
\begin{enumalph}
\item For $G=S_3 \wr S_2 \leq \GSp_4(\F_2)$, there are exactly $20$ extension groups $E$ up to conjugacy in $\M_4(\F_2) \rtimes G$, with $\#V=[E:G]=2^k$ and
\[ k = 0,0,1,1,2,4,4,4,4,5,5,5,5,6,6,8,8,9,9,10. \]
Let $H=C_2^2 \leq G$ with $[G:H]=18$.  Then for each such $E$, there is an exact core-free subgroup $D \leq E$ such that $\pi(D)=H$. 
\item For $G=S_6 \simeq \GSp_4(\F_2)$, the analogous statement to \textup{(a)} holds, with $7$ groups having $k=0,0,1,5,5,6,10$ and $H=S_3(b)^2$.  
\end{enumalph}
\end{thm}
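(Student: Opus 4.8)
The plan is to run, for $G = S_3 \wr S_2$ and then for $G = S_6$, exactly the explicit \textsf{Magma} \cite{Magma} computation already used for Theorem~\ref{thm:GcoreS5b}; as there, the code and verbose output will be made available online \cite{coderepo}. The first task is the enumeration of extension groups. A subgroup $E \leq \Liesp_4(\F_2) \rtimes G$ with $\pi(E) = G$ has $V \colonequals E \cap \Liesp_4(\F_2)$ equal to some $G$-submodule of $\Liesp_4(\F_2)$, and $E/V$ is a complement to $\Liesp_4(\F_2)/V$ inside $(\Liesp_4(\F_2)/V) \rtimes G$; such $E$ over a fixed $V$ are classified by $H^1\!\bigl(G, \Liesp_4(\F_2)/V\bigr)$, and passing to conjugacy in $\M_4(\F_2) \rtimes G$ further identifies classes differing by a coboundary from $\M_4(\F_2)/V$. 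So the enumeration reduces to the $G$-submodule lattice of $\Liesp_4(\F_2)$ together with these finitely many cohomology computations; in practice one enumerates subgroups directly and reduces modulo the ambient conjugation, as for Theorem~\ref{thm:GcoreS5b}. Carrying this out should produce $20$ conjugacy classes of $E$ for $G = S_3\wr S_2$ and $7$ for $G = S_6$, and recording $k = \dim_{\F_2} V$ for each class should reproduce the stated exponents.

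The second task is to exhibit the exact core-free subgroups. Take $H = C_2^2 \leq G$ in case~(a) and $H = S_3(b)^2 \leq G$ in case~(b). Inspecting the short lists of normal subgroups of $S_3 \wr S_2$ and of $S_6$ shows that in each case the normal core of $H$ in $G$ is trivial, so $H \leq G$ is core-free of index $[G:H] = 18$, resp.\ $20$. For each $E$ I would then look for an $H$-stable subspace $W \leq V$ that contains no nonzero $G$-submodule of $V$ and for which the extension $1 \to V/W \to \pi^{-1}(H)/W \to H \to 1$ splits; taking $D$ to be the preimage in $\pi^{-1}(H) \leq E$ of the image of such a splitting gives $\pi(D) = H$ and $D \cap \Liesp_4(\F_2) = W$. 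The normal core of $D$ in $E$ then meets $\Liesp_4(\F_2)$ in a $G$-submodule of $V$ contained in $W$, hence trivially, and projects into a normal subgroup of $G$ contained in $H$, hence trivially; so $D$ is core-free, and $L_0 = L^D \supseteq K_0 = K^H \supseteq F$ is the desired exact core-free subextension, the field diagram \eqref{eqn:fielddiagram} now having $[G:\pi(D)] = 18$, resp.\ $20$. What \textsf{Magma} must verify is that such a $W$ --- in each case one making $[V:W]$, and hence the abelian extension $L_0 \supseteq K_0$, as small as possible --- exists for \emph{every} one of the $20$, resp.\ $7$, extension groups, using a single fixed $H$.

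The step I expect to be the main obstacle is precisely that last one: not the mere existence of a core-free $D$ in each $E$ (the trivial subgroup always works), but identifying one small fixed $H \leq G$ for which, uniformly across all extension groups, a co-$G$-submodule-free $H$-stable $W$ with the required splitting can be found --- while balancing the competing demands that $[G:H]$ be small (so that $K_0$ has small degree) and that $[V:W]$ be small (so that the class field theory of $L_0 \supseteq K_0$, in the style of Section~\ref{sec:appfaltserre}, stays tractable). Showing that $C_2^2$ and $S_3(b)^2$ are workable choices --- rather than, say, the natural degree-$6$ permutation representation of $S_6$ --- is where the experimentation lies; once $H$ is fixed, the remaining checks are routine finite computations of the kind already automated for Theorem~\ref{thm:GcoreS5b}.
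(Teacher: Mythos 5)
Your proposal matches the paper's approach: the paper proves Theorem~\ref{thm:Gcore} ``in a similar way'' to Theorem~\ref{thm:GcoreS5b}, namely by explicit \textsf{Magma} computation with code posted to the repository \cite{coderepo}, and that is exactly what you propose. Your additional gloss---classifying the extension groups $E$ over a fixed $G$-submodule $V$ via $H^1(G,\Liesp_4(\F_2)/V)$, and characterizing an exact core-free $D$ by an $H$-stable $W \leq V$ that contains no nonzero $G$-submodule and for which $1 \to V/W \to \pi^{-1}(H)/W \to H \to 1$ splits---is a correct (indeed necessary-and-sufficient) description of what the computer search verifies, even though the paper does not spell it out in those terms.
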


\begin{remark}
With reference to computing conjugacy classes in stages as in section \ref{sec:instages}, we note that the index 2 subgroups of the $18$ subgroups $C_2^2$ of $S_3\wr S_2$ are not sufficient to find obstructing classes for all $20$ extension groups if one applies the more limited strategy exhibited in Remark \ref{rmk:restricttoallxi}.
\end{remark}

\begin{remark}
The remaining cases of subgroups $G \leq \GSp_4(\F_2)$ may be computed with the same method and the same code.
\end{remark}

\section{Computing Hecke eigenvalues by specialization}  \label{sec:computehecke}

Having set up the required Galois theory, we now compute Hecke eigenvalues of 
particular Siegel paramodular newforms.  
In this section, we use the technique of restriction to a modular curve to accomplish these eigenvalue computations.  We continue the notation from section \ref{sec:paramodularforms}.

\subsection{Jacobi forms and Borcherds products}

We construct our paramodular forms using Gritsenko lifts of Jacobi forms and Borcherds products.  In this section, we quickly review what we need from these theories. 

We begin with Jacobi forms; we refer to Eichler--Zagier \cite{EZ} for further reference.  
Each Jacobi form $\phi \in J_{k,N}$ of weight $k$ and index $N$ has a Fourier expansion
\begin{equation} \label{eqn:fourierJacobi}
\phi(\tau,z) = \sum_{\substack{n, r \in \Z}} c(n,r;\phi) q^n \zeta^r, 
\end{equation}
where $q=e(\tau)$ and $\zeta=e(z)$.  
We write  $\phi \in J_{k,N}(R)$ if all the Fourier coefficients of~$\phi$ 
lie in a ring $R \subseteq \C$.  We will need the level-raising operators 
$V_{m}\colon J_{k,N} \to J_{k,mN}$ (see Eichler--Zagier \cite[p.\ 41]{EZ}) that act on 
$\phi \in J_{k,N}$ via 
\begin{equation} 
\label{eqn:GritFormula}
c(n,r; \phi\,|\, V_{m}) = \sum_{\delta \mid \gcd(n,r,m)} \delta^{k-1} c\left(\frac{mn}{\delta^2},\frac{r}{\delta};\phi\right) .
\end{equation}

The Gritsenko lift \cite{Grits}
\[ \Grit \colon J_{k,N}\cusp \to S_k( K(N)) \]
lifts a Jacobi cusp form~$\phi$ to a paramodular form~$f$ by the rule 
\[a\!\left(  \begin{psmallmatrix} n  &  r/2  \\  r/2  &  Nm \end{psmallmatrix}; \Grit(\phi) \right) = c(n,r; \phi\,|\,V_m).  \]
We also have $\Grit(\phi) |_k\, \mu_N = (-1)^k \Grit(\phi)$, 
so that a Gritsenko lift has paramodular Fricke sign~$(-1)^k$.  

One convenient way to construct Jacobi forms is to use the theta blocks 
created by Gritsenko--Skoruppa--Zagier  \cite{GSZ}.  
Recall the  Dedekind $\eta$-function and the Jacobi $\vartheta$-function 
\begin{align*} 
\eta(\tau) &=q^{1/24} \prod_{n=1}^{\infty} (1-q^n) = \sum_{n=1}^{\infty} \legen{12}{n} q^{n^2/24}, \\
\vartheta(\tau,z) &= \sum_{n=-\infty}^{\infty} (-1)^n q^{(2n+1)^2/8} \zeta^{(2n+1)/2} 
= q^{1/8} (\zeta^{1/2}-\zeta^{-1/2}) \sum_{n=1}^{\infty} (-1)^{n+1} q^{\binom{n}{2}} \sum_{j=-(n-1)}^{n-1} \zeta^j .
\end{align*}
   For $d \in \Z_{>0}$ let $\vartheta_d(\tau,z) = \vartheta(\tau,dz)$.  For $d_1,\dots,d_\ell \in \Z_{>0}$ 
and $ k \in \Z$,    
   define the \defi{theta block}
\begin{equation}
\TB_k[{\mathbf d}]=
\TB_k[d_1, d_2, \ldots, d_\ell] = \eta^{2k} 
\prod_{j=1}^{\ell} \dfrac{\vartheta_{d_j}}{\eta}.
\end{equation}
The theta block $\TB_k[{\mathbf d}]$ defines a meromorphic Jacobi form (with multiplier) of weight $k$ 
and  index $m = \frac12( d_1^2 + \cdots + d_{\ell}^2)$.  Moreover \cite{EZ} (compare Poor--Yuen \cite[Theorem 4.3]{PY15}), the theta block $\TB_k[{\mathbf d}]$ is a Jacobi cusp form if 
\begin{equation} 
12 \mid (k+\ell)\quad \text{and} \quad \frac{k}{12} + \frac12 \sum_{j=1}^{\ell} {\bar B}_2(d_j x) >0,
\end{equation}
where $B_2(x) \colonequals x^2-x+\frac16$ and ${\bar B}_2(x) \colonequals B_2(x- \lfloor x \rfloor)$.

Second, we use Borcherds products in the construction of paramodular forms. 
Let $\psi$ be a weakly holomorphic Jacobi form of weight $0$ and index $N$ with integral Fourier coefficients on singular indices with Fourier expansion \eqref{eqn:fourierJacobi}.  
Define 
\begin{equation*}
A(\psi) \colonequals \frac1{24}\sum_{r\in\Z}c(0,r;\psi),\quad 
B(\psi) \colonequals \frac12\sum_{r\ge1}r c(0,r;\psi),\quad 
C(\psi) \colonequals \frac14\sum_{r\in\Z}r^2 c(0,r;\psi).  
\end{equation*}
Then $A(\psi),B(\psi),C(\psi) \in \Q$.  The Borcherds product of $\psi$ is a meromorphic paramodular form $\BP(\psi)$, perhaps with nontrivial character on~$K(N)$, with 
\begin{equation}\label{eqn:BPinfiniteproduct}
\BP(\psi)=q^{A(\psi)}\zeta^{B(\psi)}\xi^{C(\psi)}\prod_{n,r,m}(1-q^n\zeta^r\xi^{mN})^{c(mn,r;\psi)},
\end{equation}
where the product is over $n,r,m \in \Z$ such that: (i) $m \geq 0$; (ii) if $m=0$, then $n \geq 0$; and (iii) if $m=n=0$, then $r<0$.
Borcherds products are not always holomorphic and, when holomorphic, not always cuspidal.  

\subsection{Construction of newforms}

In this section, we define the nonlift paramodular newforms of interest to this article, with levels $277,353,587$.  
We will see later that this way of writing paramodular forms makes the computation of Hecke eigenvalues feasible.   

We refer to section \ref{sec:paramodularforms} for notation.  We now define the nonlift paramodular form $f_{277} \in S_2( K(277), \Z)^{+}$ following Poor--Yuen \cite[Theorem 7.1]{PY15}.  Define the following ten theta blocks:
\begin{equation}
\begin{aligned}
\Xi_1&\colonequals \TB_2(2, 4, 4, 4, 5, 6, 8, 9, 10, 14) \quad & \Xi_6&\colonequals \TB_2(2, 3, 3, 5, 5, 7, 8, 10, 10, 13)\\
\Xi_2&\colonequals \TB_2(2, 3, 4, 5, 5, 7, 7, 9, 10, 14) & \Xi_7&\colonequals \TB_2(2, 3, 3, 4, 5, 6, 7, 9, 10, 15)\\
   \Xi_3&\colonequals \TB_2(2, 3, 4, 4, 5, 7, 8, 9, 11, 13) & \Xi_8&\colonequals \TB_2(2, 2, 4, 5, 6, 7, 7, 9, 11, 13)\\
 \Xi_4&\colonequals \TB_2(2, 3, 3, 5, 6, 6, 8, 9, 11, 13) & \Xi_9&\colonequals \TB_2(2, 2, 4, 4, 6, 7, 8, 10, 11, 12)\\
 \Xi_5&\colonequals \TB_2(2, 3, 3, 5, 5, 8, 8, 8, 11, 13) & \Xi_{10}&\colonequals \TB_2( 2, 2, 3, 5, 6, 7, 9, 9, 11, 12).\\
\end{aligned}
\end{equation}  
We have, for $i=1,\dots,10$, 
\begin{center}
$\Xi_i \in J_{2,277}\cusp(\Z)$ \quad and \quad $G_i \colonequals \Grit(\Xi_i) \in S_2( K(277), \Z)$.
\end{center}
Let $f_{277}$ be the (a priori) meromorphic function on~$\Half_2$ defined by
\begin{equation} \label{eqn:fasQL277}
\begin{aligned}
f_{277} \colonequals \, &( -14G_1^2-20G_8G_2+11G_9G_2+6G_2^2-30G_7G_{10}+15G_9 G_{10}\\
&\quad +15 G_{10} G_1
-30 G_{10} G_2   
-30 G_{10} G_3+5 G_4 G_5+6 G_4 G_6+17 G_4 G_7\\
&\quad -3 G_4 G_8-5 G_4 G_9
-5 G_5 G_6 +20 G_5 G_7   
-5 G_5 G_8-{10} G_5 G_9-3 G_6^2\\
&\quad +13 G_6 G_7+3 G_6 G_8
-{10} G_6 G_9-22 G_7^2+G_7 G_8+15 G_7 G_9  
+6 G_8^2\\
&\quad -4 G_8 G_9-2 G_9^2+20 G_1 G_2
-28 G_3 G_2+23 G_4 G_2+7 G_6 G_2\\
&\quad -31 G_7 G_2+15 G_5 G_2 
+45 G_1 G_3-{10} G_1 G_5
-2 G_1 G_4-13 G_1 G_6\\
&\quad -7 G_1 G_8+39 G_1 G_7-16 G_1 G_9-34 G_3^2  
+8 G_3 G_4
+20 G_3 G_5\\
&\quad +22 G_3 G_6+{10} G_3 G_8+21 G_3 G_9-56 G_3 G_7-3 G_4^2)/ \\
&      (-G_4 + G_6 + 2 G_7 + G_8 - G_9 + 2 G_3 - 3 G_2 - G_1).  
\end{aligned}
\end{equation}
A main result of Poor--Yuen \cite[Theorem 7.1]{PY15} is that 
$f_{277}$ is actually \emph{holomorphic}: in fact, $f_{277} \in S_2( K(277), \Z)^+$ is a cuspidal, nonlift, 
paramodular form of weight $2$ that is an eigenform for all Hecke operators and has integral Fourier coefficients whose greatest common divisor is~$1$.  There are no nontrivial weight $2$ paramodular cusp forms of level $1$, so since $277$ is prime, $f_{277}$ is a newform.
Equation~\eqref{eqn:heckeopersFC} and Lemma~\ref{lem:k2fint} imply that the Euler factors~$Q_p(f_{277},t)$ 
are integral.  
 
 The first few  eigenvalues for $f_{277}$ were computed \cite{PY15} as 
\begin{equation} \label{eqn:firstfewap277}
a_p(f_{277})=-2,-1,-1,1,-2 \quad \text{for $p=2,3,5,7,11$}
\end{equation}
and the first three Hecke polynomials, identifying $f_{277}$ as type~{\bf (G)}, are: 
\begin{equation} \label{eqn:firstfewL277}
\begin{aligned}
Q_2(f_{277},t) &= 1+2t+4t^2+4t^3+4t^4, \\
Q_3(f_{277},t) &= 1+t+t^2+3t^3+9t^4, \\
Q_5(f_{277},t) &= 1+t-2t^2+5t^3+25t^4.
\end{aligned}
\end{equation}

\begin{remark}
The form $f_{277}$ can also be realized as the sum of a Borcherds product and a Gritsenko lift, giving a second, independent construction by Poor--Shurman--Yuen \cite{pyinflation}.  
\end{remark}

In a similar way, we construct a second form
\begin{equation} \label{eqn:f353Q}
f_{353} \colonequals Q(G_1,\dots,G_{11}) \in S_2( K(353), \Z)^{+}
\end{equation}
(plus eigenspace for the Fricke involution, as in \eqref{eqn:eigenspaceplusminus}) a quotient of a quadratic polynomial by a linear polynomial of~$11$ Gritsenko lifts of theta blocks: 
see Poor--Yuen \cite[Theorem 7.4]{PY15} for the specific formula for $Q$ and the forms $G_i$.  This construction 
was contingent upon assuming the existence of some nonlift in $S_2(K(353))$; 
however, the dimension $\dim S_2(K(353)) =12$ is now known 
\cite{pyinflation} via the construction of a nonlift Borcherds product in $S_2(K(353))$.

The first two Euler factors, each showing that $f_{353}$ is of type~{\bf (G)}, are
\begin{equation} \label{eqn:firstfewL353}
\begin{aligned}
Q_2(f_{353},t) &= 1+t+3t^2+2t^3+4t^4, \\
Q_3(f_{353},t) &= 1+2t+4t^2+6t^3+9t^4.
\end{aligned}
\end{equation}

Finally, we construct a form of level $587$ as a Borcherds product.  An antisymmetric nonlift Borcherds product 
$f_{587}^{-} \in S_2\left( K(587), \Z\right)^{-}$ 
was recently constructed by Gritsenko--Poor--Yuen \cite{gpy16}.  
The form $f_{587}^{-}$ is necessarily an eigenform because $\dim S_2\left( K(587)\right)^{-}=1$.  
The Fourier expansion is given by formally expanding 
\begin{equation}  \label{eqn:f587def}
f_{587}^{-} = \BP(\psi)=\xi^{587} \phi \exp( -\Grit(\psi)) \quad \text{ for } \quad \psi = (  \phi\,|\, V_2 - \Xi)/ \phi, 
\end{equation}
where 
\begin{equation} \label{eqn:phiXi587}
\begin{aligned}
\phi=\,&\,{\TB}_2(
1, \ \, 1, 2, 2, \ \, 2, 3, 3, 4, 4, \ \, 5, 5, 6, 6, 7, 8, \ \, 8, 9, 10, 11, 12, 13, 14
) \in J_{2,587}\cusp, \\
\Xi=\,&\,{\TB}_2(1, 10, 2, 2, 18, 3, 3, 4, 4, 15, 5, 6, 6, 7, 8, 16, 9, 10, 22, 12, 13, 14) \in J_{2, 1174}\cusp.  
\end{aligned}
\end{equation}
For the Borcherds product that appears in the formula for $f_{587}^-$, 
we have $\BP(\psi)\in S_k(K(587))$ with $k=\frac12c(0,0;\psi)=2$ \cite{gpy16}.  
The first two Euler factors, verifying type~{\bf (G)},  are computed to be
\begin{equation} \label{eqn:firstfewL587}
\begin{aligned}
Q_2(f_{587}^{-},t) &= 1+3t+5t^2+6t^3+4t^4, \\
Q_3(f_{587}^{-},t) &= 1+4t+9t^2+12t^3+9t^4.
\end{aligned}
\end{equation}

\subsection{Specialization}

To compute the action of the Hecke operators directly on a Fourier expansion of a Siegel paramodular form would require manipulations with series in three variables.  To avoid this, we specialize our form.  Possibilities for this specialization include restriction to Humbert surfaces (typically producing Hilbert modular forms), restriction to modular curves (producing classical modular forms), or evaluation at CM points (producing a numerical result, see Colman--Ghitza--Ryan \cite{CGR}).  Each of these methods has certain advantages and disadvantages---we choose to restrict to modular curves and work with one-variable $q$-series to avoid rigorous analysis of the upper bounds on the tails of convergent numerical series.
The biggest advantage of our choice, however,  is that Proposition~\ref{speedup1} 
allows us to sum over only $O(p^2)$ cosets  instead of $O(p^3)$ cosets, a 
significant savings; it is not clear whether such a speedup is available to a 
method that numerically evaluates at CM points.  

\begin{remark}
Specialization of Siegel modular forms is not a new idea, but here we take a different approach.  In previous work of Poor--Yuen \cite{PY15}, only three Euler factors were computed for $f_{277}$ because the computation relied on multiplying initial expansions of multivariable Fourier series.  Instead, below we will write the action of the Hecke operator~$T(p)$ on a paramodular form~$f$ as a sum of slashes $f\,|_k\, T(p) = \sum_j f\,|_k\, M_j$, and the main innovation is to specialize each part of $f\vert M_j$ to a one variable $q$-series prior to any addition, multiplication, or division.  Specialization was also used by Poor--Yuen \cite{PY07} to compute upper bounds on dimensions and some Fourier coefficients by taking advantage of the known structure of the target space of elliptic modular forms, whereas here we only use the one variable nature of the target space.
\end{remark}

Let $s \in \M_2^{\text{\rm sym}}(\Q)_{>0}$  be a symmetric, positive definite matrix with rational coefficients. 
Let $\SiegelH_g$ be the Siegel upper half space of dimension $g$, so $\SiegelH_1$ is the upper half-plane.  Define 
the holomorphic map
\begin{equation}
\begin{aligned}
\phi_s \colon \SiegelH_1 &\to \SiegelH_2 \\
 \tau &\mapsto s\tau .
\end{aligned}  
\end{equation}

\begin{lem}
Let $R \subseteq \C$ be a subring.  
Let $s=\begin{psmallmatrix} a  &  b  \\  b  &  c/N  \end{psmallmatrix} \in \M_2^{\text{\rm sym}}(\Q)_{>0}$  
with $a,b,c \in \Z$.  
Then the pullback under $\phi_s$ defines a ring homomorphism 
\begin{equation}
\phi_s^*: M(K(N),R)  \to M(\Gamma_0( \det(s) N),R)
\end{equation}
from the graded ring of Siegel paramodular forms of level $N$ with coefficients in
 $R$ to the graded ring of classical modular forms of level $\det(s)N$ with coefficients in $R$.  The map 
 $\phi_s^*$ multiplies weights by $2$ and maps cusp forms to cusp forms.
\end{lem}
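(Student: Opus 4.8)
The plan is to factor $\phi_s^*$ through a group homomorphism $\iota_s\colon\SL_2\to\Sp_4$ that lifts $\phi_s$, and then to read off every assertion of the lemma from the Fourier expansion of $f$. Throughout set $D\colonequals \det(s)N=ac-Nb^2$, which is a positive integer since $s$ is positive definite. First I would introduce
\[
\iota_s\colon \SL_2(\Q)\to\Sp_4(\Q),\qquad
\gamma=\begin{psmallmatrix}\alpha&\beta\\ \gamma'&\delta\end{psmallmatrix}\longmapsto
\widetilde{\gamma}\colonequals\begin{psmallmatrix}\alpha I_2&\beta s\\ \gamma' s^{-1}&\delta I_2\end{psmallmatrix},
\]
which is the conjugate of the standard diagonal embedding $\gamma\mapsto\begin{psmallmatrix}\alpha I_2&\beta I_2\\ \gamma' I_2&\delta I_2\end{psmallmatrix}$ by $\begin{psmallmatrix}I_2&0\\0&s\end{psmallmatrix}$; hence $\iota_s$ is a group homomorphism, and a direct computation gives $\widetilde{\gamma}^{\transpose}J\widetilde{\gamma}=(\det\gamma)J=J$, so $\iota_s$ lands in $\Sp_4(\Q)$. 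The two identities I would then check by a one-line matrix computation are the equivariance $\phi_s(\gamma\tau)=\widetilde{\gamma}\cdot\phi_s(\tau)$ for the fractional-linear actions, and, writing $\widetilde{\gamma}=\begin{psmallmatrix}A&B\\ C&E\end{psmallmatrix}$, the automorphy-factor identity $C\phi_s(\tau)+E=(\gamma'\tau+\delta)I_2$, so that $\det(C\phi_s(\tau)+E)=(\gamma'\tau+\delta)^2$. In other words, the $\Sp_4$-automorphy cocycle pulled back along $\phi_s$ and $\iota_s$ is the square of the $\SL_2$-automorphy cocycle; this is the conceptual source of the weight-doubling.

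Next I would pin down the level by determining which $\gamma\in\SL_2(\Z)$ satisfy $\widetilde{\gamma}\in K(N)$. Using $s^{-1}=\tfrac1D\begin{psmallmatrix}c&-Nb\\ -Nb&Na\end{psmallmatrix}$ and comparing the entries of $\widetilde{\gamma}$ against the explicit matrix presentation of $K(N)$: the diagonal blocks $\alpha I_2,\delta I_2$ and the upper-right block $\beta s$ are automatically integral with the required divisibilities, while the lower-right... rather, the lower-left block $\gamma' s^{-1}=\tfrac{\gamma'}{D}\begin{psmallmatrix}c&-Nb\\ -Nb&Na\end{psmallmatrix}$ meets the $K(N)$-constraints exactly when $D\mid\gamma'$. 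Thus $\iota_s(\Gamma_0(D))\subseteq K(N)$. Consequently, for $f\in M_k(K(N))$ and $\gamma\in\Gamma_0(D)$ we have $f\,|_k\,\widetilde{\gamma}=f$, and since the similitude of $\widetilde{\gamma}$ equals $\det\gamma=1$ the factor $\mu^{2k-3}$ in the classical slash is $1$; evaluating $f\,|_k\,\widetilde{\gamma}$ at $\phi_s(\tau)$ and using the identities from the first paragraph yields $(\phi_s^*f)(\gamma\tau)=(\gamma'\tau+\delta)^{2k}(\phi_s^*f)(\tau)$, i.e.\ $\phi_s^*f$ is weight-$2k$ modular for $\Gamma_0(D)=\Gamma_0(\det(s)N)$.

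For the remaining properties I would use the Fourier expansion. The map $\phi_s$ sends $\SiegelH_1$ holomorphically into $\SiegelH_2$ (as $\mathrm{Im}(s\tau)=s\,\mathrm{Im}(\tau)>0$ and $s\tau$ is symmetric), so $\phi_s^*f$ is holomorphic on $\SiegelH_1$. Substituting $Z=s\tau$ into $f(Z)=\sum_{T\ge 0}a(T;f)e(\tr(TZ))$ gives
\[
(\phi_s^*f)(\tau)=\sum_{T\ge 0}a(T;f)\,q^{\tr(Ts)},\qquad q=e(\tau),\quad \tr(Ts)=na+rb+mc\in\Z_{\ge0},
\]
where $T=\begin{psmallmatrix}n&r/2\\ r/2&Nm\end{psmallmatrix}$; moreover $\tr(Ts)=0$ iff $T=0$ since $s$ is positive definite, and $\{T\ge 0:\tr(Ts)=e\}$ is finite for each $e$. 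This shows $\phi_s^*f$ is holomorphic at $\infty$ with $q$-expansion coefficients that are finite $R$-sums of the $a(T;f)$, hence lie in $R$; that $\phi_s^*$ multiplies weights by $2$ and is a ring homomorphism is immediate, being composition with $\phi_s$ together with additivity of weights. Holomorphy at the other cusps, and vanishing there when $f$ is cuspidal, I would get the same way after replacing $f$ by $f\,|_k\,\widetilde{\gamma_0}$ for $\gamma_0\in\SL_2(\Z)$ carrying $\infty$ to the given cusp: $f\,|_k\,\widetilde{\gamma_0}$ is again a holomorphic Siegel modular form for a group commensurable with $\Sp_4(\Z)$ (cuspidal if $f$ is, by the standard behaviour of the Siegel $\Phi$-operator under $\GSp_4^+(\Q)$), so its Fourier/Koecher expansion is supported on semidefinite (resp.\ positive definite) $T$ and its pullback is holomorphic at (resp.\ vanishes at) $\infty$. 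Finally, a cusp form has $a(T;f)=0$ unless $T>0$, whence $\tr(Ts)>0$ for every contributing $T$ and $\phi_s^*f$ vanishes at $\infty$, hence at every cusp; so $\phi_s^*$ maps cusp forms to cusp forms.

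The genuinely fiddly points are the level computation in the second paragraph (careful bookkeeping of the paramodular entry conditions against the entries of $\widetilde{\gamma}$) and the behaviour at cusps other than $\infty$ in the third paragraph (which invokes the general theory — Koecher principle and preservation of cuspidality under $\GSp_4^+(\Q)$ — rather than an explicit computation); everything else is formal once the embedding $\iota_s$ and the squared-automorphy-cocycle identity are in place.
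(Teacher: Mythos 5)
Your argument is correct and, unlike the paper, is self-contained: the paper's proof of this lemma is simply a citation to Poor--Yuen \cite[Proposition~5.4]{PY07} together with the remark that ``a straightforward modification'' suffices, so you are in effect reconstructing what that reference establishes. The skeleton you use---lifting $\phi_s$ to a group homomorphism $\iota_s\colon\SL_2\to\Sp_4$, checking that the $\Sp_4$-automorphy factor pulls back to the square of the $\SL_2$ one, determining the level by solving $\iota_s(\gamma)\in K(N)$, and reading off holomorphy, cuspidality, weight-doubling, and $R$-integrality from the Fourier expansion $\phi_s^*f=\sum_{T\ge 0}a(T;f)\,q^{\tr(Ts)}$---is the natural line of proof and almost certainly what the cited ``modification'' amounts to; appealing to the Koecher principle and to preservation of cuspidality under $\GSp_4^+(\Q)$ for the cusps other than $\infty$ is standard and legitimate.

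One phrase slightly overstates what you actually prove: ``meets the $K(N)$-constraints \emph{exactly} when $D\mid\gamma'$.'' Writing out the lower-left block $\gamma' s^{-1}=\tfrac{\gamma'}{D}\begin{psmallmatrix}c&-Nb\\-Nb&Na\end{psmallmatrix}$ against the paramodular entry conditions gives the three divisibilities $D\mid\gamma'a$, $D\mid\gamma'b$, $D\mid\gamma'c$, so if $\gcd(a,b,c)>1$ a weaker condition on $\gamma'$ would already suffice and the pullback can be modular of a \emph{smaller} level than $\det(s)N$. You only use the sufficient direction ``$D\mid\gamma'\Rightarrow\iota_s(\gamma)\in K(N)$,'' which is exactly what the lemma needs, so this is a matter of wording rather than a gap.
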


\begin{proof}
The proof follows from a straightforward modification of a result of Poor--Yuen \cite[Proposition 5.4]{PY07}. 
\end{proof}

Let $f \in M_k(K(N),R)$ be a paramodular form with Fourier expansion \eqref{eqn:Fourierexppara}, the Fourier expansion of the specialization 
$\phi_s^{*}f \in  M_{2k}(\Gamma_0( \det(s) N),R)$ is
\begin{equation} 
(\phi_s^*f)(\tau) = f(s\tau) = \sum_{n=0}^{\infty} \biggl(\sum_{\substack{\,T: \,\Tr(sT)=n}} a(T;f)\biggr) q^n. 
\end{equation}
Furthermore, the specialization of ~$f$ after slashing with 
a block upper-triangular matrix 
$\begin{psmallmatrix} A  &  B  \\  0  &  D \end{psmallmatrix} \in \GSp_4^{+}(\Q)$ 
with similitude $\mu = \det(AD)^{1/2}$ 
  is given by
\begin{equation} \label{eqn:phisslash}
\begin{aligned}
&\phi_s^*(f\,|_k \begin{psmallmatrix} A & B \\ 0 & D \end{psmallmatrix})(\tau) =(f\,|_k \begin{psmallmatrix} A & B \\ 0 & D \end{psmallmatrix})(s \tau)
 = \det(AD)^{k-3/2}    \det(D)^{-k}    f(AsD^{-1}\tau + BD^{-1}) \\
&\qquad= \det(A)^{k}\det(AD)^{-3/2}  \sum_{n \in \Q_{\geq 0}} \biggl(\sum_{\substack{T:\,\Tr(AsD^{-1}T)=n}} e\left(\Tr(BD^{-1}T)\right)\, a(T;f)  \biggr) q^n.
\end{aligned}
\end{equation}

Let $s=\left(
\begin{smallmatrix} 
a  &  b    \\
b  &  c/N  
\end{smallmatrix} 
\right) \in \M_2^{\text{\rm sym}}(\Q)_{>0}$ with $a,b,c \in \Z$.  Using \eqref{eqn:heckeopersM},
the specialization of $f\,|_k\, T(p)$ may be written 
\begin{equation} \label{eqn:restriction1}
\begin{aligned}
\phi^{*}_s(f\,|_k\,T(p))(\tau) &= p^{2k-3} f(ps\tau)
\\ &\quad 
+ p^{k-3}\sum_{i \bmod{p}}
f\Bigl(
\begin{psmallmatrix} 
a/p  &  b    \\
b  &  pc/N  
\end{psmallmatrix} 
\tau+
\begin{psmallmatrix} 
i/p  &  0    \\
0  &  0  
\end{psmallmatrix} 
\Bigr)
\\
& \quad + p^{k-3}
\sum_{i\bmod p}
\left(
\sum_{j \bmod{p}} 
f\Bigl(
\begin{psmallmatrix} 
pa &  b+ia    \\
b+ia  &  (c/N+2ib+i^2a)/p  
\end{psmallmatrix} 
\tau+
\begin{psmallmatrix} 
0  &  0    \\
0  &  j/p  
\end{psmallmatrix} 
\Bigr)\right)
\\
&\quad
 + p^{-3}\sum_{i,j,k \bmod{p}}  
 f\Bigl(
s
\tau/p+
\begin{psmallmatrix} 
i/p  &  j/p    \\
j/p  &  k/p  
\end{psmallmatrix}
\Bigr).
\end{aligned}
\end{equation}
Upon expanding in Puiseux $q$-series, 
there is cancellation among these sums of specializations. The following proposition shows that 
partial summation gives new specializations whose sum {\em over smaller index sets\/}  
equals the original sum 
for integral powers of~$q$.  
For a Puiseux series $f \in \C[[q^{1/\infty}]]$ and $e \in \Q_{\geq 0}$, we denote by $\Coeff_e f \in \C$ the coefficient of $q^e$ in $f$.

\begin{prop}\label{speedup1}
Let $s=\begin{psmallmatrix} a  &  b  \\  b  &  c/N  \end{psmallmatrix} \in \M_2^{\text{\rm sym}}(\Q)_{>0}$  
with $a,b,c \in \Z$.  Let $p$ be prime, and let $f \in M_k(K(N))$.  Then the following statements hold for all $e \in \Z_{\geq 0}$.
\begin{enumalph}
\item If $p \nmid a$, then
\begin{equation*}
\begin{aligned}
\Coeff_e \sum_{i \bmod{p}}
f\Bigl(
\begin{psmallmatrix} 
a/p  &  b    \\
b  &  pc/N  
\end{psmallmatrix} 
\tau+
\begin{psmallmatrix} 
i/p  &  0    \\
0  &  0  
\end{psmallmatrix} 
\Bigr) &= p \Coeff_e f\Bigl(
\begin{psmallmatrix} 
a/p  &  b    \\
b  &  pc/N  
\end{psmallmatrix} 
\tau\Bigr) 
\\
\Coeff_e \sum_{i,j,k \bmod{p}}  
 f\Bigl(
s
\tau/p+
\begin{psmallmatrix} 
i/p  &  j/p    \\
j/p  &  k/p  
\end{psmallmatrix}
\Bigr) 
&= 
p\Coeff_e \sum_{j,k \bmod{p}}  
 f\Bigl(
s
\tau/p+
\begin{psmallmatrix} 
0  &  j/p    \\
j/p  &  k/p  
\end{psmallmatrix}
\Bigr).
\end{aligned}
\end{equation*}

\item If $p \nmid b$, then
\begin{equation*}
\Coeff_e \sum_{i,j,k \bmod{p}}  
 f\Bigl(
s
\tau/p+
\begin{psmallmatrix} 
i/p  &  j/p    \\
j/p  &  k/p  
\end{psmallmatrix}
\Bigr) 
= 
p\Coeff_e \sum_{i,k \bmod{p}}  
 f\Bigl(
s
\tau/p+
\begin{psmallmatrix} 
i/p  &  0    \\
0  &  k/p  
\end{psmallmatrix}
\Bigr).
\end{equation*}
\item If $p \nmid c$, then
\begin{equation*}
\Coeff_e \sum_{i,j,k \bmod{p}}  
 f\Bigl(
s
\tau/p+
\begin{psmallmatrix} 
i/p  &  j/p    \\
j/p  &  k/p  
\end{psmallmatrix}
\Bigr) 
= 
p\Coeff_e \sum_{i,j \bmod{p}}  
 f\Bigl(
s
\tau/p+
\begin{psmallmatrix} 
i/p  &  j/p    \\
j/p  &  0  
\end{psmallmatrix}
\Bigr).
\end{equation*}
\item
For $i \in \Z$, if $p  \nmid  (c+2ibN+i^2aN)$, then
\begin{equation*}
\Coeff_e \sum_{j \bmod p} f\Bigl(
\begin{psmallmatrix} 
pa &  b+ia    \\
b+ia  &  (c/N+2ib+i^2a)/p  
\end{psmallmatrix} 
\tau+
\begin{psmallmatrix} 
0  &  0    \\
0  &  j/p  
\end{psmallmatrix} 
\Bigr)
=p\Coeff_e 
f\Bigl(
\begin{psmallmatrix} 
pa &  b+ia    \\
b+ia  &  (c/N+2ib+i^2a)/p  
\end{psmallmatrix} 
\tau
\Bigr).
\end{equation*}
\end{enumalph}
\end{prop}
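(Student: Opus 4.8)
The plan is to expand everything into (Puiseux) $q$-series using the Fourier expansion \eqref{eqn:Fourierexppara} of $f$ and to exploit the elementary fact that translating the argument of $f$ by a rational symmetric matrix rescales each Fourier coefficient by a root of unity: for $T=\begin{psmallmatrix} n & r/2 \\ r/2 & Nm\end{psmallmatrix}$ and $M,W\in\M_2^{\textup{sym}}(\Q)$ one has $e(\tr(T(W+M)))=e(\tr(TM))\,e(\tr(TW))$, so
\[
f(W+M)=\sum_{T\ge 0} e(\tr(TM))\,a(T;f)\,e(\tr(TW)).
\]
In each identity the left-hand side is a \emph{finite} sum $\sum_j f(W+M_j)$ of translates of $f$ along a common argument $W$ (equal to $s'\tau$ with $s'=\begin{psmallmatrix} a/p & b \\ b & pc/N\end{psmallmatrix}$, or to $s\tau/p$, or to the matrix appearing in (d), according to the case), so one may interchange the translation sum with the sum over $T$ without analytic worry. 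What remains is $\sum_{T\ge 0}a(T;f)\,e(\tr(TW))$ multiplied by a product of character sums $\sum_{x\bmod p}e(\lambda x/p)$, one for each translation variable, where $\lambda$ is the appropriate integer among $n$, $r$, $Nm$; each such sum equals $p$ if $p\mid\lambda$ and $0$ otherwise.

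The first step is to carry out these character-sum evaluations in all four cases; using $p\nmid N$ (the standing hypothesis under which $T_p$ is defined) to replace $p\mid Nm$ by $p\mid m$, one finds that each side reduces to a fixed power of $p$ times $\sum_T a(T;f)\,q^{\,\tr(TW/\tau)}$ restricted to those $T$ satisfying a prescribed subset of the conditions $\{p\mid n,\ p\mid r,\ p\mid m\}$. The powers of $p$ match automatically, the right-hand side carrying an explicit extra factor $p$ to compensate for having one fewer translation variable; and the subset of conditions imposed on the right-hand side is exactly the one imposed on the left with a single condition dropped.

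The crux of the proof — and the only place the hypotheses $p\nmid a$, $p\nmid b$, $p\nmid c$, or $p\nmid (c+2ibN+i^2aN)$ enter — is the comparison in \emph{integral} $q$-degrees. Here I would compute the exponent of $q$ in the $T$-term explicitly. In the first identity of (a) this exponent is $na/p+rb+pcm$, so requiring it to be integral already forces $p\mid n$, which is exactly the condition supplied by the character sum; in (d) the exponent is $npa+r(b+ia)+m(c+2ibN+i^2aN)/p$, so integrality forces $p\mid m$, again matching the character sum. In the remaining three identities the exponent equals $(na+rb+cm)/p$ up to a $p$-adic unit, so integrality gives the single congruence $na+rb+cm\equiv 0\pmod p$; since the character sums already supply two of the three divisibility conditions, the third follows because $p$ is coprime to whichever of $a,b,c$ multiplies the remaining variable ($a$ with $n$, $b$ with $r$, $c$ with $m$). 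In every case, then, the set of $T$ contributing to $q^e$ is the same on both sides for each $e\in\Z_{\ge 0}$, so the coefficients agree.

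A word on where the difficulty lies: there is no analytic subtlety, since all translation sums are finite and the Fourier series converges absolutely on $\SiegelH_2$ (positive definiteness of $s$, hence of the auxiliary matrices, guaranteeing non-negative exponents). The entire argument reduces to $2\times 2$ matrix multiplications and finite geometric sums, and the only point requiring care is the bookkeeping in parts (a) (second identity), (b), and (c), where the two character-sum conditions must be combined with the integrality constraint in the correct order to recover the missing divisibility; parts (a) (first identity) and (d) are essentially immediate because there the character sum already coincides with the integrality condition.
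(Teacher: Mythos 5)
Your proof is correct and takes essentially the same route as the paper: expand in Fourier series, evaluate the finite character sums $\sum_{x \bmod p} e(\lambda x/p)$ to obtain divisibility conditions on the Fourier indices, and then observe that for the coefficient of an \emph{integral} power $q^e$ the linear relation $an+br+cm=pe$ together with the non-divisibility hypothesis on the relevant entry of $s$ makes one of the character-sum conditions redundant, which exactly accounts for the factor of $p$ on the right. The paper writes out case (c) only; your version organizes the same observation uniformly across all four parts, which is a reasonable presentation choice but not a different argument.
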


\begin{proof}
We prove (c); the other proofs are similar.  Suppose $p \nmid c$.  Let $e \in {\Z}_{\ge 0}$.  Then the coefficient of $q^e$ in the left-hand side is equal to 
\begin{equation} \label{eqn:ijkpsum}
\sum_{\substack{i,j,k\bmod{p}\\
n,r,m:\, an+br+cm=pe}
}
e((in+jr+km)/p)a(\myT ;f) 
\end{equation}
where $\myT =\begin{psmallmatrix} 
n  &  r/2    \\
r/2  &  mN  
\end{psmallmatrix}$.
If any of $n,r,m$ is not a multiple of $p$,
then summing over $i,j,k$ modulo $p$ in \eqref{eqn:ijkpsum} would yield a contribution of zero.
Hence we may restrict the sum to the terms where $p \mid n$, $p \mid r$, and $p \mid m$.  
But since $p \nmid c$ and given $an+br+cm=pe$, the conditions $p \mid n$ and $p \mid r$ imply $p \mid m$.
Thus \eqref{eqn:ijkpsum} becomes simply
\begin{align*}
\sum_{\substack{i,j,k\bmod{p}\\
n,r,m:\, an+br+cm=pe \\
p\mid n,\, p \mid r}
}
e((in+jr+0)/p)a(\myT ;f)
& =p\sum_{\substack{i,j\bmod{p}\\
n,r,m:\, an+br+cm=pe \\
p \mid n,\, p \mid r}
}
e((in+jr)/p)a(\myT ;f) \\   
 =p\sum_{\substack{i,j\bmod{p}\\
n,r,m:\, an+br+cm=pe 
}
}
e((in+jr)/p)a(\myT ;f)
&=p \Coeff_e
\sum_{i,j \bmod{p}}  
 f\Bigl(
s
\tau/p+
\begin{psmallmatrix} 
i/p  &  j/p    \\
j/p  &  0  
\end{psmallmatrix} 
\Bigr).  \qedhere
\end{align*}
\end{proof}

\begin{remark}
Proposition \ref{speedup1} provides a certain subtle speedup because the coefficients at integral powers are equal, even though the series themselves are not necessarily equal. 
Further simplifying the above sums to
\begin{equation*}
p^3
\sum_{\substack{
n,r,m:\, an+br+cm=pe\\
p \mid n,p \mid r, p \mid m}
}
a(\myT ;f).
\end{equation*}
does not help: we want to leave the sums in terms of coefficients of specializations.
\end{remark}

In a similar way, we can compute the specialization $\phi^{*}_s(f\,|_k\,T_1(p^2))$ and there are similar cancellations in the character sums as in Proposition \ref{speedup1}.

\def\SSS{{\mathcal S}}

\subsection{Algorithmic detail}

In this section, we provide three further bits of algorithmic detail.

First, we describe the choice of $s$.  Suppose $f$ has a nonzero coefficient $a(t_0; f)$ where $t_0$ has small determinant and small entries.  
If we choose $s$ to be the adjoint of $2t_0$, 
then the restriction
$\phi_s^*(f)$ likely begins with $a(t_0;f)q^{\det(s)}$.
In particular if $t_0$ has minimal determinant, then this is forced.
In practice, we can just check the initial expansion to see that
\begin{equation*}
\phi_s^*(f)(\tau)=a(t_0;f)q^{\det(s)}+\text{higher powers of $q$}.
\end{equation*}
For each $T(p)$,
we want to expand
$\phi_s^*(f|T(p))$
to at least $q^e$ where $e=\det(s)$ is the target exponent of $q$.
For a polynomial combination of Gritsenko lifts and Borcherds products,
the target exponent of each part $g(\myA\tau+\myB)$ would also be $e$.
But for a rational function of Gritsenko lifts and Borcherds products, we have to be slightly more careful.
If the denominator of this rational functional restricted to $(\myA\tau+\myB)$ has leading
term~$q^\mu$,
then we must expand both the numerator and denominator to a higher target
term~$q^{e+\mu}$.
Therefore, we may end up evaluating the restriction of the denominator twice,
with the initial execution used to get the leading exponent~$\mu$.

Second, we provide our algorithm for finding all $\myT$ such that $\langle \myA,\myT\rangle\le u$.  Let $\myA$ and $\myB$ be two rational, symmetric $2\times2$ matrices with $\myA$ positive definite.  
We explain how to effectively compute specializations of the form $f(G\tau +H)$, as in equation~\ref{eqn:restriction1} 
or Proposition~\ref{speedup1}.  We adapt our index sets~$\SSS$ to the type used 
in~\eqref{eqn:BPinfiniteproduct} for Borcherds products but they can be used in all the cases we need to program.  
For any $u,\delta\in\R$, let
\begin{align*}
\SSS(N,\myA, u, \delta)=\Bigl\{ 
(n,r,m)\in\Z^3: 
&\tr\Bigl(\begin{psmallmatrix} 
n  &  r/2    \\
r/2  &  mN  
\end{psmallmatrix}
\myA\Bigr)\le u, 
m\ge0,
4mnN-r^2\ge\delta,
\\
&
\text{if $m=0$ then $n\ge0$ and if $m=n=0$ then $r<0$}\Bigr\}.
\end{align*}

\begin{prop}
Let $\myA=\begin{psmallmatrix} 
\alpha  &  \beta    \\
\beta  &  \gamma  
\end{psmallmatrix} \in \M_2(\R)$ be positive definite.  Let $u,\delta \in \R$.  
Let $\Delta=\det \myA=\alpha\gamma-\beta^2 > 0$.  Let $X =   4\alpha umN  -\alpha^2\delta  -4\Delta(mN)^2$.
Then the elements $(n,r,m) \in \SSS(N,\myA,u,\delta)$ satisfy the following bounds. 
\begin{enumalph} 
\item   
If $m\ge 1$, then
\begin{equation*}
\begin{aligned}
1 \le &\ m \le \frac{\alpha(u+\sqrt{u^2-\delta \Delta})}{2\Delta N},\\
\frac{-2\beta mN -\sqrt{X}}{\alpha} \le &\ r \le 
\frac{-2\beta mN +\sqrt{X}}{\alpha},  \quad \text{ and } \\
 \frac{r^2+\delta}{4mN}\le &\ n \le \frac{u - \beta r -\gamma mN}{\alpha}.
\end{aligned}
\end{equation*}
\item 
If $m=0$ and $n>0$, then
\begin{equation*}
\begin{aligned}
r^2 &\le -\delta \quad \text{and} \quad 
1\le n \le \frac{u - \beta r }{\alpha}.
\end{aligned}
\end{equation*}
\item 
If $m=n=0$, then
\begin{equation*}
r^2 \le -\delta \text{ and $r<0$}.
\end{equation*}
\end{enumalph}
\end{prop}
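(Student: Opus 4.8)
The plan is to unwind the definition of $\SSS(N,\myA,u,\delta)$ into explicit scalar inequalities and then solve them as a nested system, eliminating $n$ first, then $r$, then $m$. First I would compute the trace appearing in the first defining condition: multiplying out $\begin{psmallmatrix} n & r/2 \\ r/2 & mN\end{psmallmatrix}\myA$ and taking the trace gives $\alpha n + \beta r + \gamma mN$, so that condition is simply the linear inequality $\alpha n + \beta r + \gamma mN \le u$. Throughout, positive-definiteness of $\myA$ supplies $\alpha>0$ and $\Delta=\det\myA>0$, which is what legitimizes every division below and fixes the orientation of the quadratic roots.

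For part (a), fix $m \ge 1$. The condition $4mnN - r^2 \ge \delta$ rearranges to $n \ge (r^2+\delta)/(4mN)$, while the trace condition gives $n \le (u - \beta r - \gamma mN)/\alpha$; together these are the displayed bounds on $n$. Such an $n$ exists only if $(r^2+\delta)/(4mN) \le (u-\beta r-\gamma mN)/\alpha$, which after clearing the (positive) denominators becomes the quadratic inequality $\alpha r^2 + 4\beta mN\,r + \bigl(\alpha\delta - 4umN + 4\gamma(mN)^2\bigr) \le 0$ in $r$. A direct computation shows its discriminant equals $4X$ with $X = 4\alpha u mN - \alpha^2\delta - 4\Delta(mN)^2$, and its roots are exactly $(-2\beta mN \mp \sqrt{X})/\alpha$, so the quadratic is nonpositive precisely on the stated interval for $r$. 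Finally, for any valid $r$ to exist we need $X \ge 0$, i.e.\ $4\Delta(mN)^2 - 4\alpha u mN + \alpha^2\delta \le 0$; regarding the left side as a quadratic in $mN$ with positive leading coefficient, its discriminant is $16\alpha^2(u^2-\delta\Delta)$ and its larger root is $\alpha(u+\sqrt{u^2-\delta\Delta})/(2\Delta)$, which gives the upper bound on $m$ (the lower bound $m \ge 1$ being the hypothesis of this case).

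Parts (b) and (c) are the degenerate cases $m=0$. When $m=0$ the discriminant condition $4mnN-r^2\ge\delta$ collapses to $-r^2 \ge \delta$, i.e.\ $r^2 \le -\delta$, and the trace condition becomes $\alpha n + \beta r \le u$. If additionally $n>0$ then $n \ge 1$ since $n\in\Z$, and dividing the trace inequality by $\alpha>0$ yields $n \le (u-\beta r)/\alpha$, which is (b). If instead $n=0$ as well, the only surviving constraints are $r^2 \le -\delta$ together with the sign condition $r<0$ built into the definition of $\SSS$, giving (c).

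I do not expect a genuine obstacle here: the argument is elementary manipulation of two quadratic inequalities. The only points demanding care are (i) invoking positive-definiteness of $\myA$ so that $\alpha,\Delta>0$ and the divisions and root orderings are correct, and (ii) bookkeeping the two discriminant computations—checking that the quadratic-in-$r$ has discriminant $4X$ and that the resulting quadratic-in-$mN$ has discriminant $16\alpha^2(u^2-\delta\Delta)$—so that the radicands $X$ and $u^2-\delta\Delta$ appearing in the statement are exactly the quantities controlling when real solutions exist.
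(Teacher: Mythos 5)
Your proof is correct and follows essentially the same approach as the paper: unwind the definition of $\SSS$ into the linear trace inequality and the discriminant inequality, read off the $n$-bounds directly, clear denominators to produce the quadratic in $r$, then impose nonnegativity of that quadratic's discriminant to get the quadratic in $mN$ and hence the bound on $m$. The only substantive difference is that you carry out the two discriminant computations explicitly (arriving at $4X$ and $16\alpha^2(u^2-\delta\Delta)$), whereas the paper leaves them as routine; note in passing that the denominator ``$2dN$'' in the paper's displayed bound on $m$ is a typo for $2\Delta N$, which your derivation correctly recovers.
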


\begin{proof}
The main two conditions that need to be satisfied are
$\alpha n+\beta r+\gamma mN\le u$ and $4mnN-r^2\ge\delta$.
The case $m=0$ is straightforward, so we only deal with the case $m\ge1$ here.
These two inequalities lead immediately to the third inequality as stated in the proposition.  
From this third inequality, we work with terms on the left and right of $n$; multiply through by $4mN\alpha$ 
and put the terms on one side: 
\begin{equation*}
\alpha r^2+\alpha\delta
-4mNu+4mN\beta r+4\gamma(mN)^2\le 0.  
\end{equation*}
Solving this quadratic inequality for $r$ yields the second inequality stated in the proposition.
A condition for there to be a solution in $r$ is that the inside~$X$ of the square root must be nonnegative.
Solving the resulting quadratic inequality yields the first
inequality in the proposition.
\end{proof}

We conclude with a final speedup. 
Suppose we wish to calculate the coefficient of $q^e$ in $f(\myA\tau+\myB)$.
If
there are no 
$(n,r,m)\in\SSS(N,\myA,u,\delta)$
such that $\tr\Bigl(
\begin{psmallmatrix} 
n  &  r/2    \\
r/2  &  mN  
\end{psmallmatrix}
\myA\Bigr)=e$,
then we may skip the term involving $\myA$.
This simple observation is especially useful for terms in the second summand in \eqref{eqn:restriction1}: 
for well chosen $s$, there are typically at most $2$ choices of~$i$ 
for which such $(n,r,m)$ exist.  It often happens that,  for these surviving~$i$,  Proposition
\ref{speedup1}(d) applies.  

\subsection{Example of restricting $f_{277}$}
Now suppose that $f$ is represented as a rational function in Gritsenko lifts~$G_i$ with coefficients in a commutative ring $R$ by $f=Q(G_1, \ldots, G_r)$.
Both the slash by~$M$ and the specialization by $\phi_s^{*}$ 
may be applied directly to each Gritsenko lift, so that we obtain 
 
\begin{equation} 
\phi_s^*(f\,|\, M) = Q(\phi_s^*(G_1\,|\, M), \ldots, \phi_s^*(G_r\,|\, M)).
\end{equation}
If the Fourier coefficients of $f$ satisfy $a(T;f) \in R \subseteq \C$, then for the 
representative matrices $M_j$ appearing in the coset decomposition \eqref{eqn:heckeopersM} for the Hecke operator $T(p)$, the sum in \eqref{eqn:phisslash} can be taken over $n \in \frac{1}{p}\Z_{\geq 0}$ and the coefficients of $\phi_s^*(f \,|\, M_j)$ belong to the ring $R[1/p,\zeta_p]$ where $\zeta_p=e(1/p)$ is a primitive $p$th root of unity.
From specializing $f\,|\, T(p) = \sum_j f\,|\, M_j = a_p(f) f$, 
the eigenvalue $a_p(f)$ for $T(p)$ can be computed by performing field operations on \LP\ series in $q$ via
\begin{equation}
\label{computethis}
a_p(f)= 
\frac{1}{\phi_s^{*}\left(f\right) }
\sum_j \phi_s^{*}\left(f\,|\,M_j\right) \in R[1/p, \zeta_p][[q^{1/p}]]
\end{equation}
whenever the specializing curve $\phi_s$ is chosen so that $\phi_s^{*}\left(f\right)$ is not identically zero.  In practice, we choose a target exponent $e$ such that
$\Coeff_e \phi_s^{*}f \ne0$ and then
\begin{equation}
\label{computethis2}
a_p(f)= 
\frac{\Coeff_e\bigl(\sum_j \phi_s^{*}\left(f\,|\,M_j\right)\bigr)}{
\Coeff_e({\phi_s^{*}\left(f\right) })}.
\end{equation}

\begin{remark}
One practical advantage of this technique of restricting to modular curves is that 
when more than one coefficient in the $q$-expansion of \eqref{computethis} is computed, 
it constitutes a double check on the value of $a_p(f)$.
\end{remark}

\begin{example} \label{exm:f277exp}
We consider the core example of the form $f_{277}$ of level $N=277$ constructed above \eqref{eqn:fasQL277}.  A Fourier coefficient of $f_{277}$ whose matrix index has the smallest determinant is $a(t_0;f_{277})=-3$, where $t_0=\begin{pmatrix} 49 & -233/2 \\ -233/2 & 277 \end{pmatrix}$ and $\det(2t_0)=3$.  Accordingly we select 
$s=\begin{pmatrix} 554 & 233 \\ 233 & 98 \end{pmatrix}$, which is 
the adjoint of $2t_0$.  Working over $R=\Z$,   
we find
\begin{equation} 
\phi_s^*(f_{277}) = -3q^3 + 6q^6 + 6q^9 + 3q^{12} + 3q^{15} - 12q^{18} + 3q^{21} + O(q^{24}).
\end{equation}
As a sanity check, we recognized $\phi_s^*(f_{277})$ using modular symbols as a classical modular form of weight $4$ and level $3\cdot 277$ to order $O(q^{400})$.  We then compute
\begin{equation}
\phi_s^*(f_{277} \,|\, T_2) = 6q^3 - 12q^6 - 12q^9 - 6q^{12} - 6q^{15} + 24q^{18} - 6q^{21} + O(q^{24}) 
\end{equation}
so quite convincingly, $a_2(f_{277}) = -2$, in agreement with \eqref{eqn:firstfewap277}.
\end{example}

To compute the action of Hecke operators on the specialized expansion \eqref{computethis}, we work (to a finite degree of $q$-adic precision) with coefficients  over $\C$ or over $\Z/m\Z$ with $m$ suitably large---we consider these two approaches in turn in the next two sections.

\subsection{Over floating point complex numbers}

We may also compute $a_p(f)$ via equation~\eqref{computethis} over the complex numbers using interval arithmetic.  

\begin{example} \label{exm:f277compute}
We perform our Hecke computation with in-house C++ code.  Continuing with $f=f_{277}$ as in Example \ref{exm:f277exp}, for $p=2$ we work with 512 bits of precision: the upper size encountered was $3.40282\cdot 10^{38}$ and the lower size was $2.9387\cdot 10^{-39}$, giving
\[ a_2(f) =\frac{\phi_s^*(f \,|\, T_2)}{\phi_s^*(f)} \equiv \frac{6q^3 + O(q^5)}{-3q^3 + O(q^4)} = -2 + O(q) \]
up to an error $10^{-75}$ under a second on a standard desktop CPU.  The largest computation required for this $f$ was $a_{43}(f)  = 4$; with the same bit precision and maximum error smaller than $10^{-40}$,  it took less than 90 minutes.
\end{example}

\begin{rmk}
Given the first few Dirichlet coefficients of an $L$-function in the Selberg class with specified conductor and $\Gamma$-factors, Farmer--Koutsoliotas--Lemurell \cite{FKL} can (in principle) rigorously compute complex approximations to the next few Dirichlet coefficients using just the approximate functional equation.  This method is practical for small examples---and it is especially useful when the $L$-function is of unknown, speculative, or otherwise complicated origin.  Prolonging an initial $L$-series is a possible 
avenue for extending the range of examples of modularity proven in this article.  
\end{rmk}

\subsection{Expansion over a finite field} \label{sec:computeforms}

As an alternative to complex expansion, we may also work in a finite ring.  To do so, we need the following archimedean information about the Hecke eigenvalue.

\begin{prop} \label{eigenbound}
Let $f \in S_k(K(N))$ be an eigenform for the Hecke operators $T(p),T_1(p^2)$ with 
eigenvalues $a_p(f),a_{1,p^2}(f) \in \C$ where $p \nmid N$.  Then 
\begin{equation}
\begin{aligned}
\left| a_p(f) \right| &\le p^{k-3}(1+p)(1+p^2) ; \quad
\left| a_{1,p^2}(f) \right| \le p^{2k-6}(1+p)(1+p^2)p.
\end{aligned}
\end{equation}
\end{prop}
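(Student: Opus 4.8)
The plan is to bound the Hecke eigenvalues by the operator norm of $T(p)$ and $T_1(p^2)$ with respect to the sup-seminorm
\[ \|g\|_\infty \colonequals \sup_{Z \in \Half_2} |g(Z)|\,(\det \operatorname{Im} Z)^{k/2}, \]
which is finite and strictly positive on any nonzero $f \in S_k(K(N))$: this boundedness is the standard analytic reformulation of the cusp condition, proved exactly as for $\Sp_4(\Z)$ (see Freitag \cite{Freitag}), since $K(N)$ is commensurable with $\Sp_4(\Z)$. Everything then reduces to understanding how $\|\cdot\|_\infty$ transforms under the slash action of $\GSp_4^{+}(\Q)$ and to counting cosets.

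First I would record the transformation law: for $M = \begin{psmallmatrix} A & B \\ C & D \end{psmallmatrix} \in \GSp_4^{+}(\R)$ with similitude factor $\mu = \det(M)^{1/2}$, writing $M = \mu^{1/2}M_0$ with $M_0 \in \Sp_4(\R)$ and using the classical identity $\det \operatorname{Im}(M_0\langle Z\rangle) = |\det(C_0Z+D_0)|^{-2}\det\operatorname{Im} Z$, one obtains $\det\operatorname{Im}(M\langle Z\rangle) = \mu^2 |\det(CZ+D)|^{-2}\det\operatorname{Im} Z$. Plugging this into $(f\,|_k\,M)(Z) = \mu^{2k-3}\det(CZ+D)^{-k}f(M\langle Z\rangle)$ gives
\[ |(f\,|_k\,M)(Z)|^2\,(\det\operatorname{Im} Z)^k = \mu^{2k-6}\,|f(M\langle Z\rangle)|^2\,(\det\operatorname{Im}(M\langle Z\rangle))^k, \]
and since $Z \mapsto M\langle Z\rangle$ is a biholomorphic automorphism of $\Half_2$, taking suprema yields $\|f\,|_k\,M\|_\infty = \mu^{k-3}\|f\|_\infty$. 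Next I would apply this together with the triangle inequality to the coset decompositions. Every single-coset representative in \eqref{eqn:heckeopersM} for $T(p) = \HeckeT(K(N)\diag(1,1,p,p)K(N))$ lies in that double coset, hence has similitude $p$, and there are $\deg T(p) = (1+p)(1+p^2)$ of them; so $\|f\,|_k\,T(p)\|_\infty \leq p^{k-3}(1+p)(1+p^2)\,\|f\|_\infty$. Likewise every representative in \eqref{eqn:heckeopersMM} for $T_1(p^2) = \HeckeT(K(N)\diag(1,p,p^2,p)K(N))$ has similitude $p^2$, and there are $\deg T_1(p^2) = (1+p)(1+p^2)p$ of them, so $\|f\,|_k\,T_1(p^2)\|_\infty \leq p^{2k-6}(1+p)(1+p^2)p\,\|f\|_\infty$. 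Since $f$ is an eigenform, $f\,|_k\,T(p) = a_p(f)f$ and $f\,|_k\,T_1(p^2) = a_{1,p^2}(f)f$; dividing by $\|f\|_\infty > 0$ gives the two stated inequalities.

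The only genuinely nonformal input is the finiteness of $\|f\|_\infty$ for cusp forms, which I would cite rather than reprove; the rest is routine bookkeeping with the slash. The one place to be careful is the normalization: the factor $\mu^{2k-3}$ in the definition of $|_k$, and the exponent $2$ on $\mu$ in the volume transformation law (because the Siegel space has degree $g = 2$), together produce the factor $\mu^{k-3}$, and mishandling either changes the final powers of $p$. I do not anticipate any real obstacle.
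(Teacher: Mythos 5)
Your proof is correct.  Let me compare it with the paper's argument.

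The paper works entirely at the level of Fourier coefficients: it invokes the classical Hecke bound $|a(T;f)|\le B\det(T)^{k/2}$ (with $B$ the optimal constant), applies the explicit Fourier-coefficient formula~\eqref{eqn:heckeopersFC} for $f\,|_k\,T_p$, bounds each of the four summands by tracking how $\det(T)$ scales under $T\mapsto pT$, $T\mapsto \frac1p T[u]$, and $T\mapsto \frac1p T$, and then divides by $\det(T)^{k/2}$ and takes a supremum over $T$.  Adding the resulting powers gives $p^k+p^{k-1}+p^{k-2}+p^{k-3}=p^{k-3}(1+p)(1+p^2)$.  Your argument instead stays at the level of holomorphic functions on $\Half_2$: you use the invariant sup-seminorm $\|g\|_\infty=\sup_Z|g(Z)|(\det\operatorname{Im}Z)^{k/2}$, show from the slash normalization $\mu^{2k-3}$ and the volume transformation $\det\operatorname{Im}(M\langle Z\rangle)=\mu^2|\det(CZ+D)|^{-2}\det\operatorname{Im}Z$ that $\|f\,|_k\,M\|_\infty=\mu^{k-3}\|f\|_\infty$, and then apply the triangle inequality over the $\deg T$ single cosets, each of similitude $\mu=p$ (resp.\ $p^2$).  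These are two realizations of the same underlying idea — the eigenvalue is bounded by an operator norm, and that norm is computed by counting cosets and tracking a weight factor — but your version is the cleaner one in that it never needs the explicit coefficient formula or the bookkeeping with $\det T$: the only inputs are the degree of the double coset, the similitude of its representatives, and the finiteness of $\|f\|_\infty$ (which is equivalent to the finiteness of the paper's $B$).  The paper's version has the advantage of using only data already displayed in the article (equation~\eqref{eqn:heckeopersFC}), while yours applies uniformly to $T(p)$ and $T_1(p^2)$ with no separate computation for the second operator, and generalizes immediately to any Hecke operator once its degree and similitude are known.  Both proofs are correct, and your exponent bookkeeping checks out.
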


\begin{proof}
By an elementary estimate, 
there exists a $B>0$ such that $| a(T;f) | \le B \det(T)^{k/2}$ for all $T$.  
Clearly $B = \sup_{T>0} {| a(T;f)|}\,{\det(T)^{-k/2}}$ is optimal.  
By \eqref{eqn:heckeopersFC}, we have
\begin{align*} 
|a_p(f)| \,&|a(T;f)| =
\left|a\left( T; f | T(p) \right)\right| \\
\le  &\left|a\left( pT;f\right)\right| + p^{k-2} \sum_{j \ \text{\rm mod} \ p}| a( \textstyle{\frac1p} T \begin{bsmallmatrix} 1 & 0 \\ j & p \end{bsmallmatrix};f)|  
+ p^{k-2}  |a( \textstyle{\frac1p} T \begin{bsmallmatrix} p & 0 \\ 0 & 1 \end{bsmallmatrix};f)|
+ p^{2k-3} |a( \frac1p T;f )| \\
\le &B p^k \det(T)^{k/2} +Bp^{k-1} \det(T)^{k/2} +B p^{k-2} \det(T)^{k/2} +B p^{k-3} \det(T)^{k/2}.
\end{align*}
 From the equation 
$|a_p(f)|\,|a(T;f)| \det(T)^{-k/2} \le B\left( p^k+p^{k-1}+p^{k-2}+p^{k-3} \right)$, 
we obtain the desired result by taking the supremum over $T>0$.  

A similar argument shows the inequality for $a_{1,p^2}(f)$.
\end{proof}

If $a \in \Z$ and $\left| a \right| < C$, then we can recover $a \in \Z$ from its congruence class modulo $m$ whenever $m>2C$.  For our purposes, we might as well work with a \emph{prime} modulus $m$, and indeed, because of the needed $p$th roots of unity, we choose a large prime $m$ such that $m \equiv 1 \psmod{p}$ and work in $R=\Z[\zeta_p]/\mathfrak{m}$ where $\mathfrak{m}$ is a fixed choice of split prime 
above $m$, and we compute the expansion \eqref{computethis} in $R[[q]]$ as
\[ a_p(f) \equiv \frac{1}{\phi_s^{*}\left(f\right) }
\sum_j \phi_s^{*}\left(f\,|\,M_j\right) \pmod{\frakm} \]
and then lift the result to $\Z \subseteq\Z[\zeta_p]$.  
The computational benefit is that we may replace $\zeta_p$ by an integer and compute 
modulo~$m$. 

\begin{example}  \label{example:587}
Let $f_{587}^-\in S_2(K(587))^-$ be the Borcherds product defined in \eqref{eqn:f587def}.  
We choose $t_0=\begin{pmatrix} 4& -137/2\\-137/2&  1174\end{pmatrix}$
and have $a(t_0,f)=-1$.
We used $s=\begin{pmatrix} 2348& 137\\137&  8\end{pmatrix}$
 and target exponent $e=\tr(st_0)=15$.
We used the finite field method in our computations,
which required a choice of a prime modulus $m$ and
an integer $\gamma$ such that
$\gamma\not\equiv 1 \pmod{m}$ and
$\gamma^p\equiv 1 \pmod{m}$.
The modulus $m$ must be chosen large enough so that 
$m>\lfloor2C\rfloor$ where  $C=p^2(1+1/p)(1+1/p^2)$ from Proposition \ref{eigenbound}.
The code was written in C++ using FLINT for operations of polynomials in one variable modulo an integer, and the computation of the restriction method to compute $a_{41}(f_{587}^-)$ took  less than 2 hours on a typical CPU.  The computation of $a_{1,p^2}(f)$ for $p \leq 11$ took just a few minutes.
\end{example}

\section{Verifying paramodularity}  \label{sec:verifypara}

In this section, we carry out the Faltings--Serre method for our case of interest $\redG=\GSp_4$ and $\ell=2$, proving our main Theorem \ref{thm:mainthm277} as well as the other two advertised cases.  We employ the conventions and notation of section \ref{sec:galoisreps}, in particular for Galois representations and $L$-functions.

\subsection{The case $N=277$}

Let $X=X_{277}$ be the smooth projective curve over $\Q$ given by the equation
\begin{equation} \label{eqn:yx277}
X\colon y^2+(x^3+x^2+x+1)y=-x^2-x
\end{equation}
with LMFDB label \href{http://www.lmfdb.org/Genus2Curve/Q/277/a/277/1}{\textsf{277.a.277.1}}, or equivalently by
\begin{equation} \label{eqn:yx277-armand}
y^2+y=x^5-2x^3+2x^2-x. 
\end{equation}
Both models are minimal with discriminant $\Delta=277$.  Let $A=A_{277}=\Jac X_{277}$ be the Jacobian of $X_{277}$, a principally polarized abelian surface over $\Q$ of conductor $277$.  
Let $f=f_{277} \in S_2(K(277))$ be the Siegel modular form of weight $2$ constructed in \eqref{eqn:fasQL277}.  

Our main result (implying Theorem \ref{thm:mainthm277}) is as follows.

\begin{thm} \label{thm:proofofmodularity}
For all primes $p$, we have $L_p(A_{277},T)=Q_p(f_{277},T)$.  In particular, we have $L(A_{277},s) = L(f_{277},s,\spin)$ and the abelian surface $A_{277}$ is paramodular.
\end{thm}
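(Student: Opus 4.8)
The strategy is to apply the general Faltings--Serre machinery developed in sections \ref{sec:faltingsserre}--\ref{sec:GSp4} with $\redG = \GSp_4$ and $\ell = 2$ to the two $2$-adic Galois representations $\rho_{A,2}$ and $\rho_{f,2}$ attached to $A = A_{277}$ and $f = f_{277}$. Both are trace computable: the trace of $\rho_{A,2}(\Frob_p)$ is the coefficient $a_p$ of $L_p(A,T)$, computed by point counting on $X_{277}$ over $\F_p$; the trace of $\rho_{f,2}(\Frob_p)$ is $a_p(f)$, computed by the specialization method of section \ref{sec:computehecke}. Both representations are unramified outside $S = \{2, 277\}$ (the conductor is prime, coprime to $2$), both have similitude character $\chi_2$ and determinant $\chi_2^2$ (so in particular $\det\rho_{A,2} = \det\rho_{f,2}$, letting us work with $\Lie^0(\GSp_4) = \fraksp_4$ by Lemma \ref{lem:detmatch}), and $f$ is of type~\textbf{(G)} by inspection of $Q_2(f,T)$ in \eqref{eqn:firstfewL277}, so Theorem \ref{thm:galoisrep} applies to produce $\rho_{f,2}$ with the matching $L$-factor property $\det(1 - \rho_{f,2}(\Frob_p)T) = Q_p(f,T)$.

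\textbf{The residual step.} First I would establish $\overline{\rho}_{A,2} \simeq \overline{\rho}_{f,2}$ by Algorithm \ref{alg:detresidual}. Computing $Q_p(f,T) \bmod 2$ and $L_p(A,T) \bmod 2$ for $p = 3, 5$ pins down the residual image: by Lemma \ref{lem:s5s6s3s2}, since $277$ is prime (hence odd and squarefree) and the principal polarization has odd degree, the image of $\overline{\rho}_{A,2}$ is one of $S_5(b), S_6, S_3 \wr S_2$, and the characteristic-polynomial data from $p=3,5$ forces it to be $S_5(b)$ (this is Lemma \ref{lem:residuallyagree}). One then enumerates $S_5$-extensions of $\Q$ unramified outside $\{2, 277\}$ via a Hunter search, identifies the unique field $K$ realizing $\overline{\rho}_{A,2}$, and checks that $\overline{\rho}_{f,2}$ has the same traces on the primes used; with ramification controlled by Proposition \ref{prop:discbound} (giving $\ord_{277} d_{K_0} \le 1$ for the degree-$5$ subfield $K_0$), the field enumeration is a finite, feasible computation.

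\textbf{The main step.} With the common residual representation $\overline{\rho}$ fixed, I would run Algorithm \ref{alg:detrep} with the core-free refinements of section \ref{sec:appfaltserre}. By Theorem \ref{thm:GcoreS5b}, for $G = S_5(b)$ there are exactly $10$ extension groups $E \le \fraksp_4(\F_2) \rtimes G$ with $\pi(E) = G$, each (for $E \not\simeq G$) admitting an exact core-free subgroup $D$ of index $2$ with $\pi(D) = D_6(b)$ of index $10$, so each obstructing $L$ is the Galois closure of a degree-$20$ field $L_0$ over a degree-$10$ field $K_0 = K^{D_6(b)} = \Q(\alpha_4 + \alpha_5)$, with $L_0 \supseteq K_0$ quadratic and unramified outside $\{2,277\}$ by the diagram \eqref{eqn:LL0S5b}. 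Algorithmic class field theory enumerates these finitely many $L_0$; for each, Step 2$'$a produces the finitely many twists $\varphi$, and Step 4$'$/4$''$ finds a witness prime $\frakp$ from the obstructing cycle-type data $\Obc(E,G,\xi)$. Checking $a_p = a_p(f)$ for this (small) finite set of witness primes --- it suffices to verify equality of traces for $p \le 43$ --- then forces $\rho_{A,2} \simeq \rho_{f,2}$ by the proof of correctness of Algorithm \ref{alg:detrep}. Equivalence of the two $\GSp_4(\Z_2)$-representations gives $L_p(A,T) = Q_p(f,T)$ for all $p \nmid 2\cdot 277$; the factors at $p = 2, 277$ are then handled separately by a direct comparison (at $277$ using that both are Steinberg of conductor exponent $1$, cf.\ Lemma \ref{lem:fixedfieldordp}, and at $2$ by a Livné-style argument or direct computation of $L_2(A,T)$ and $Q_2(f,T)$), and the equality $L(A_{277},s) = L(f_{277},s,\spin)$ follows by taking the Euler product. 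The main obstacle is the practical size of the computation in the main step: enumerating the $2$-elementary abelian extensions $L_0$ of the degree-$10$ field $K_0$ unramified outside $\{2,277\}$ and, above all, the Galois-theoretic bookkeeping needed to match permutation representations and extract guaranteed-obstructing cycle types --- this is precisely why the core-free reduction and the staged conjugacy-class computations of section \ref{sec:instages} are essential rather than optional.
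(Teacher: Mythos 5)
Your proposal follows essentially the same route as the paper: establish residual equivalence with common image $S_5(b)$ via Lemma~\ref{lem:residuallyagree}, invoke Theorem~\ref{thm:GcoreS5b} and the core-free/class-field-theoretic machinery to enumerate the $4095$ quadratic extensions $L_0 \supseteq K_0$ unramified outside $\{2,277\}$, extract witness primes via the staged cycle-type computations of section~\ref{sec:instages}, and verify trace agreement for $p \leq 43$. Your explicit flagging of the Euler factors at $p=2,277$ is slightly more careful than the paper's proof (which stops at $\rho_{A,2}\simeq\rho_{f,2}$), and your proposed remedies are reasonable even if briefly stated: the equality at $p=2$ is the already-tabulated $L_2(A,T)=Q_2(f,T)$ from \eqref{eqn:firstfewL277}, and at $p=277$ one restricts the equivalent $2$-adic representations to a decomposition group and uses Theorem~\ref{thm:galoisrep}(v) together with Lemma~\ref{lem:fixedfieldordp}.
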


To ease notation, we now dispense with subscripts.  To prove this theorem, we use the strategy described in section \ref{sec:appfaltserre}, with the further practical improvements from section \ref{sec:instages}.  Attached to $A$ by \eqref{eqn:rhoAell} and to $f$ by Theorem \ref{thm:galoisrep} and by the remarks afterward are $2$-adic Galois representations
\[ \rho_{A},\rho_{f} \colon \Gal_{\Q,S} \to \GSp_4(\Q_2^{\text{al}}) \]
where $S=\{2,277,\infty \}$ such that $\det \rho_A = \det \rho_f=\chi_2^2$ the square of the $2$-adic cyclotomic character.  Our first task is to verify equivalence of residual representations.  We start with Lemma \ref{lem:descentwhenirred}(a), which allows us to conclude that the residual representations $\overline{\rho}_A^{\textup{ss}},\overline{\rho}_f^{\textup{ss}} \colon \Gal_{\Q,S} \to \GSp_4(\F_2)$ take values in $\F_2$.  

\begin{lem} \label{lem:residuallyagree}
The residual representations $\overline{\rho}_A, \overline{\rho}_f \colon \Gal_{\Q,S} \to \GSp_4(\F_2)$ are equivalent and have absolutely irreducible image $S_5(b)$.
\end{lem}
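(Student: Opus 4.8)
The plan is to pin down $\overline{\rho}_A$ completely and explicitly, to constrain $\overline{\rho}_f$ from a few mod-$2$ Hecke polynomials together with the ramification bounds already available, and then to identify the two by Carayol's criterion (Theorem~\ref{thm:Brauer-Nesbit}); this is precisely an instance of Algorithm~\ref{alg:detresidual}, with the characteristic-polynomial refinement of the following remark, applied to $\redG = \GSp_4$, $F = \Q$, $S = \{2,277,\infty\}$, $\ell = 2$, and the representations $\rho_A,\rho_f$. Throughout I use the isomorphism $\GSp_4(\F_2) = \Sp_4(\F_2) \cong S_6$ of \eqref{eqn:S6GSp2} and the classification \eqref{table:subgroups} of its absolutely irreducible subgroups.

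First I would compute $\overline{\rho}_A$ directly from $A[2]$. Completing the square in \eqref{eqn:yx277-armand} gives the odd-degree model $(2y+1)^2 = 4x^5-8x^3+8x^2-4x+1$, so $\Q(A[2])$ is the splitting field of the quintic $g(x)=4x^5-8x^3+8x^2-4x+1$, and the Galois action on $A[2]$, viewed inside $\Sp_4(\F_2)$, is exactly $\iota$ applied to its permutation action on the six Weierstrass points (the five roots of $g$ together with the rational point at infinity) in $S_6$. A Galois-group computation shows the Galois group of $g$ over $\Q$ is $S_5$; since this copy of $S_5\hookrightarrow S_6$ is a point stabiliser, its transpositions map under $\iota$ to transvections, which is the property distinguishing $S_5(b)$ from $S_5(a)$ in \eqref{table:subgroups}. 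Moreover $A$ has good reduction at $2$ (as $\Delta=277$) and semistable reduction of toric rank one at $277$, so $\Q(A[2])$ is ramified only at $277$, tamely, with inertia generated by such a transvection. Hence $\img\overline{\rho}_A = S_5(b)$, which is absolutely irreducible by \eqref{table:subgroups}; this also follows from Lemma~\ref{lem:s5s6s3s2} once $277$ is seen to ramify in $\Q(A[2])$.

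Next I would constrain $\overline{\rho}_f$ and match. Reducing the Euler factors of \eqref{eqn:firstfewL277} modulo $2$ gives $Q_3(f,T)\equiv T^4+T^3+T^2+T+1$ and $Q_5(f,T)\equiv (T+1)^2(T^2+T+1)\pmod 2$, with $\tr\overline{\rho}_f(\Frob_5)\equiv a_5(f)\equiv 1\pmod 2$; thus $\img\overline{\rho}_f\le S_6$ contains an element of order $5$ — which already acts $\F_2$-irreducibly on $\F_2^4$, so $\overline{\rho}_f$ is irreducible — and an element of order dividing $6$, of trace $1$ and characteristic polynomial $T^4+T^3+T+1$. Scanning \eqref{table:subgroups}, the absolutely irreducible subgroups of $S_6$ carrying both kinds of element lie among $S_6$, $A_6$, $S_5(b)$, $A_5(b)$. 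On the other hand $\overline{\rho}_f$ is unramified outside $\{2,277,\infty\}$, has determinant $\chi_2^2$ and similitude character $\chi_2$ matching $\rho_A$ (so each mod-$2$ characteristic polynomial is determined by two residues), and by Lemma~\ref{lem:fixedfieldordp} its Artin conductor has $277$-valuation at most $1$; hence by Proposition~\ref{prop:discbound} its fixed field is the Galois closure of a number field $K_0$ of degree at most $6$, ramified only over $\{2,277\}$, with $\ord_{277} d_{K_0}\le 1$. By Hermite's theorem there are finitely many such $K_0$, enumerable via Lemma~\ref{lem:enumerateallfields}; running over them and over the injections $\Gal(K_0\,|\,\Q)\hookrightarrow\GSp_4(\F_2)$ up to conjugacy, and imposing $\tr(\Frob_p)\equiv a_p(f)\pmod 2$ with the characteristic-polynomial refinement for $p=3,5$, leaves exactly the pair $(\Q(A[2]),\overline{\rho}_A)$. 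Since both $\overline{\rho}_A$ and $\overline{\rho}_f$ then correspond to this unique surviving pair, Theorem~\ref{thm:Brauer-Nesbit} with $r=1$ gives $\overline{\rho}_f\simeq\overline{\rho}_A$, and in particular $\img\overline{\rho}_f = S_5(b)$ is absolutely irreducible.

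The main obstacle is certifying completeness of the field enumeration: the Hermite--Krasner bound on $|d_{K_0}|$ is far too large to search naively, so the real work lies in pruning it using the precise local data (tameness away from $2$, the allowed inertia shapes at $277$ and at $2$, and Proposition~\ref{prop:discbound}), in working with small faithful permutation representations of the candidate groups, and in cross-checking against tables of number fields of bounded ramification — and then in verifying that the resulting list of pairs $(K_0,\theta)$ is exhaustive, so that the two primes $p=3,5$ provably suffice. By comparison, computing $Q_p(f,T)\bmod 2$ from the construction of $f_{277}$ and $L_p(A,T)\bmod 2$ by counting points on $X$ over $\F_p$ and $\F_{p^2}$ is routine.
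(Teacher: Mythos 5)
Your proposal is correct and follows the paper's own proof essentially step by step: identify $\overline{\rho}_A$ from the quintic $g(x)=4x^5-8x^3+8x^2-4x+1$ and classify its $S_5$ image as $S_5(b)$; constrain $\overline{\rho}_f$ from $Q_3,Q_5\bmod 2$, the bound $\ord_{277}(\cond\overline{\rho}_f)\le 1$ from Lemma~\ref{lem:fixedfieldordp} and Proposition~\ref{prop:discbound}, and a Jones--Roberts/Hunter search over quintic and sextic fields unramified outside $\{2,277\}$; then match --- the only cosmetic difference being that you distinguish $S_5(b)$ from $S_5(a)$ via the presence of transvections where the paper uses the trace of order-$3$ elements (both criteria are stated in \eqref{table:subgroups} and the surrounding example). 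One small caveat not affecting your argument: good reduction of $A$ at $2$ does not by itself imply $\Q(A[2])$ is unramified at $2$ (this is the $\ell=p$ case, where inertia can act nontrivially), but your identification of the image as $S_5(b)$ rests on the point-stabiliser/transvection observation and not on that parenthetical.
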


\begin{proof}
We apply Algorithm \ref{alg:detresidual}.  The representation $\overline{\rho}_A$ is given by the action on $A[2]$; completing the square in \eqref{eqn:yx277-armand} to obtain the model $y^2=g(x)=4x^5-8x^3+8x^2-4x+1$ we obtain $\overline{\rho}_A$ via the action on the roots of $g(x)$, which we verify is isomorphic to $G=S_5(b)$ as the elements of order $3$ have trace $1$ by \eqref{table:subgroups}.  As implied by the general theory, the field $\Q(A[2])$ is ramified only at $2,277$. 

For $\overline{\rho}_f$, we only have indirect access to the Galois representation.  By \eqref{eqn:firstfewL277}, we have 
\[ \det(1-\overline{\rho}_f(\Frob_3)T)= 1+T+T^2+T^3+T^4 \in \F_2[T], \] 
so $\img \overline{\rho}_f$ contains an element of order $5$.  Similarly $\Frob_5$ has order divisible by $3$, so $\img \overline{\rho}_f$ is isomorphic to one of $A_5,S_5,A_6,S_6$.  Therefore the fixed field under $\ker \overline{\rho}_f$ is the splitting field of an irreducible, separable polynomial $g(x)$ of degree $5$ or $6$.  Let $F \colonequals \Q[x]/(g(x))$; then $F$ is unramified away from $2,277$.  But we know a bit more: by Lemma \ref{lem:fixedfieldordp}, the $277$-valuation of the Artin conductor of $\overline{\rho}_f$ is at most $1$, so $\ord_{277}(d_F) \leq 1$.  A Hunter search, or looking up the possible fields in the database of Jones--Roberts \cite{JonesRoberts}, shows that there are no such degree $6$ polynomials, and exactly two polynomials of degree $5$, namely $x^5 - x^4 + 2x^2 - x + 1$ and $x^5 - x^4 + 4x^3 + 5x - 1$.  Both polynomials have the same Galois closure, with Galois group $S_5$; we need to distinguish the representations afforded by the inclusion $S_5 \subseteq S_6$ and the fixed representation \eqref{eqn:S6GSp2}.   We refer to \eqref{table:subgroups}: for the second one $\Frob_3$ does not have order $5$, so we must have a match with the representation afforded by the first one.
\end{proof}

With Lemma \ref{lem:residuallyagree} in hand, we apply Lemma \ref{lem:descentwhenirred}(b) to conclude that our $2$-adic representations descend to $\rho_A,\rho_f \colon \Gal_{\Q,S} \to \GSp_4(\Z_2)$.  We now finish the proof of the theorem.

\begin{proof}[Proof of Theorem \textup{\ref{thm:proofofmodularity}}]
We apply Algorithm \ref{alg:detrep}.  Step 1 was done in Lemma \ref{lem:residuallyagree}, and the residual representations have a common image
\[ G \colonequals \img \overline{\rho} \leq \GSp_4(\F_2)=\Sp_4(\F_2) \]  
with $G \simeq S_5(b)$.  Let $K$ be the fixed field under $\ker \overline{\rho}$, so $\Gal(K\,|\,\Q) \simeq G$ under $\overline{\rho}$.

Using Theorem \ref{thm:Gcore}, we now find all obstructing extension
groups $E$, an exact core-free subgroup
$D \leq E$, and a list of conjugacy classes of
obstructing elements.  We refer to the field diagram \eqref{eqn:LL0S5b}.  The extension $K_0=K^H$ has
degree $10$, explicitly it is given by adjoining a root of the
polynomial
\[ x^{10}+3x^9+x^8-10x^7-17x^6-7x^5+11x^4+18x^3+13x^2+5x+1. \]
The possible obstructing extensions $\varphi\colon \Gal(L \,|\, \Q) \hookrightarrow E$ are obtained as the Galois closure of the quadratic extension $L_0 \supseteq K_0$, still unramified away from $S$ so they may be constructed using class field theory: we find there are $4095$ quadratic extensions $L_0 \supseteq K_0$ unramified away from $S$.  To write down polynomials (not necessarily small) that represent these fields takes about 5 minutes; as we developed the algorithm, we found it convenient to optimize these polynomials (using polredabs), which took about $6$ hours.  In the course of the algorithm we consider $24062$ obstructing pairs $(L,\varphi)$.  

For each such obstructing pair $(L,\varphi)$, we compute a small prime $p \neq 2,277$ such that the conjugacy class of $\Frob_p$ is obstructing, according to the stages of section \ref{sec:instages}.  Computing obstructing primes by their $L_0$-cycle type as in Step 4${}^{\prime}$, we obtain the list of primes $\{3,5,7,11,13,17,19,23,29,31,37,41,43,53\}$; going a bit further, considering obstructing primes by the pair of $L_0,K_0$-cycle type as in Step 4${}^{\prime\prime}$, we manage only to remove the prime $p=53$ from the list (but reduce the sizes of primes in many cases), so we refine the list of primes to those with $p \leq 43$.  The total running time for this step was about 90 minutes on a standard CPU.

There are $8$ pairs $(L,\varphi)$ that require $p=53$.  The field $L_0$ generated by a root of
\[ 
\begin{gathered} 
x^{20} + 121x^{18} + 7459x^{16} + 286418x^{14} + 7324711x^{12} + 126372663x^{10} + 1387797423x^8 + \\ 7013797890x^6 - 30031807329x^4 - 582846604659x^2 - 1630793025157
\end{gathered} 
\]
has Galois closure $L$ with $\Gal(L\,|\,\Q) \simeq E \leq \Liesp_4(\F_2) \rtimes G$ with $\#E = 2^{10} 5!$.  There are four outer automorphisms $\xi$, and with respect to one of these, we find that $\Frob_5$ is an obstructing conjugacy class based on the $L_0,K_0$-cycle type pair $6^3 1^2, 6^1 3^1 1^1$ but $\Frob_{53}$ is the first obstructing prime based \emph{only} on the $L_0$-cycle type $8^1 4^2 2^2$ (and this cycle type works for all four $\xi$).

We are now in Step 5 of the algorithm, and to conclude we will show that $\tr \rho_A(\Frob_p) = \tr \rho_f(\Frob_p)$ for all $p \leq 43$.  The former traces can be done by counting points, the latter traces were computed using the method in Example \ref{exm:f277compute}, and we check that they are equal, completing the proof.  (In fact, we went further than necessary and checked the equality of traces for all $p \leq 97$.)
\end{proof}

\subsection{The case $N=353$}

We now turn to a case with residual image $S_3 \wr C_2$.
Let $X=X_{353}$ be the genus $2$ curve with LMFDB label \textup{\href{http://www.lmfdb.org/Genus2Curve/Q/353/a/353/1}{\textsf{353.a.353.1}}} defined by
\[
X \colon y^2+(x^3+x+1)y = x^2
\]
and $A=A_{353}=\Jac X$, a typical abelian surface of conductor $353$.  Let $f=f_{353} \in S_2(K(353))$ be the paramodular form constructed in \eqref{eqn:f353Q}.

\begin{thm}
For all primes $p$, we have $L_p(A_{353},T)=Q_p(f_{353},T)$.  In particular, $L(A,s) = L(f_{353},s,\spin)$ and the abelian surface $A_{353}$ is paramodular.
\end{thm}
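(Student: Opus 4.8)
The plan is to run the Faltings--Serre argument of Theorem~\ref{thm:proofofmodularity} in the residual case $G\simeq S_3\wr S_2$. First I would set up the $2$-adic Galois representations $\rho_A,\rho_f\colon\Gal_{\Q,S}\to\GSp_4(\Q_2^{\al})$ with $S=\{2,353,\infty\}$: the representation $\rho_A$ from the Tate module of $A$ as in \eqref{eqn:rhoAell}, and $\rho_f$ from Theorem~\ref{thm:galoisrep}. For the latter I first record that $f_{353}$ of \eqref{eqn:f353Q} (now unconditionally a genuine cusp form, as $\dim S_2(K(353))=12$ is established) is a paramodular newform---because $353$ is prime and there are no weight $2$ cusp forms of level $1$---and that it is of type~\textbf{(G)}, by Proposition~\ref{prop:schmidt} applied to the Euler factor $Q_2(f_{353},T)$ of \eqref{eqn:firstfewL353}, whose reciprocal roots all have absolute value $2^{1/2}=2^{k-3/2}$ for $k=2$. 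By Lemma~\ref{lem:squarefree}, $A_{353}$ is typical; both representations have determinant $\chi_2^2$ and are unramified outside $S$.

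Next I would verify, via Algorithm~\ref{alg:detresidual}, that the residual representations $\overline{\rho}_A,\overline{\rho}_f\colon\Gal_{\Q,S}\to\GSp_4(\F_2)$ are absolutely irreducible, equivalent, and have image $S_3\wr S_2$ (after first applying Lemma~\ref{lem:descentwhenirred}(a) to see they take values in $\F_2$). For $\overline{\rho}_A$ one computes the $2$-division field $\Q(A[2])$ of $\Jac X_{353}$ and checks that its Galois group over $\Q$ is $S_3\wr S_2$; by Lemma~\ref{lem:s5s6s3s2} the only absolutely irreducible possibilities for odd squarefree conductor are $S_5(b)$, $S_6$, and $S_3\wr S_2$, and these are distinguished by their element orders. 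For $\overline{\rho}_f$ one reduces $Q_p(f_{353},T)$ modulo $2$ for a few small odd primes $p\ne353$ (using \eqref{eqn:firstfewL353}, and specialization for more primes if needed) to constrain the characteristic polynomials of $\overline{\rho}_f(\Frob_p)$ and hence $\img\overline{\rho}_f$; then, as in Lemma~\ref{lem:residuallyagree}, the fixed field of $\ker\overline{\rho}_f$ is the splitting field of an irreducible sextic $K_0'/\Q$ unramified outside $S$ with $\ord_{353}d_{K_0'}\le1$ by Proposition~\ref{prop:discbound}(a); enumerating the finitely many such sextics (a Hunter search, or the Jones--Roberts database) pins $\overline{\rho}_f$ down and matches it with $\overline{\rho}_A$. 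Lemma~\ref{lem:descentwhenirred}(b) then descends both to $\rho_A,\rho_f\colon\Gal_{\Q,S}\to\GSp_4(\Z_2)$.

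Third, I would execute Algorithm~\ref{alg:detrep} with the core-free reduction of section~\ref{sec:appfaltserre} and the staged witness search of section~\ref{sec:instages}, using $\Liesp_4=\Lie^0(\GSp_4)$ (legitimate by Lemma~\ref{lem:detmatch} since $\det\rho_A=\det\rho_f$). By Theorem~\ref{thm:Gcore}(a), for $G\simeq S_3\wr S_2$ there are exactly $20$ extension groups $E\le\Liesp_4(\F_2)\rtimes G$ with $\pi(E)=G$, each admitting an exact core-free subgroup $D$ with $\pi(D)=H=C_2^2$ of index $18$ in $G$. Writing $K_0=K^H$ (a degree $18$ field, made explicit by a defining polynomial and optimized with polredabs) and $L_0=L^D$, the obstructing extensions $\varphi\colon\Gal(L\,|\,\Q)\hookrightarrow E$ arise as Galois closures of quadratic extensions $L_0\supseteq K_0$ unramified away from $S$, which I enumerate by class field theory; for each resulting obstructing pair $(L,\varphi)$ I compute a witness prime $p\ne2,353$ from the $L_0$-cycle type of $\Frob_p$, refined by the $K_0$-cycle type (Steps~$4'$ and $4''$). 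This yields an explicit finite set $\mathcal{P}$ of primes.

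Finally I would check $\tr\rho_A(\Frob_p)=\tr\rho_f(\Frob_p)$ for all $p\in\mathcal{P}$: the traces $a_p(A)$ by point counts on $X_{353}$ over $\F_p$ and $\F_{p^2}$, and the traces $a_p(f_{353})$ by the specialization technique of section~\ref{sec:computehecke}---since $f_{353}$ is a quotient of Gritsenko lifts, equation~\eqref{computethis} applies directly after restriction to a well-chosen modular curve $\phi_s$, as in Examples~\ref{exm:f277exp} and~\ref{exm:f277compute}. Equality of all these traces, together with Theorem~\ref{thm:Brauer-Nesbit} and the correctness of Algorithm~\ref{alg:detrep}, forces $\rho_A\simeq\rho_f$; hence $L_p(A_{353},T)=Q_p(f_{353},T)$ at every $p\nmid2\cdot353$, and the remaining factors at $2$ and $353$ follow from the local comparison (the local factor of $A$ against conductor matching for $f$, with Theorem~\ref{thm:galoisrep}(v)). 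The equality of $L$-functions and paramodularity are then immediate. I expect the two main obstacles to be the class field theory over the degree $18$ field $K_0$---the number of quadratic extensions $L_0\supseteq K_0$ unramified outside $S$ can be large, and computing and reducing their defining polynomials is expensive---and the computation of $a_p(f_{353})$ for the larger primes in $\mathcal{P}$ via specialization.
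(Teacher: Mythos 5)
Your proposal follows the same overall strategy as the paper's proof: residual identification via Algorithm~\ref{alg:detresidual}, descent to $\GSp_4(\Z_2)$ via Lemma~\ref{lem:descentwhenirred}, the core-free reduction of section~\ref{sec:appfaltserre} using Theorem~\ref{thm:Gcore}(a) for $G\simeq S_3\wr S_2$, class field theory over the degree~$18$ field $K_0$ to enumerate the quadratic extensions $L_0$, and the staged witness search of section~\ref{sec:instages}, followed by a final trace comparison. That much matches.

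There is, however, a real gap in your treatment of $\img\overline{\rho}_f$. You leap directly to ``the fixed field of $\ker\overline{\rho}_f$ is the splitting field of an irreducible sextic $K_0'/\Q$ with $\ord_{353}d_{K_0'}\le1$ by Proposition~\ref{prop:discbound}(a)''. But Proposition~\ref{prop:discbound}(a) is stated only for $\Gal(K\,|\,\Q)\simeq S_3\wr S_2$ or $S_m$ with $m=5,6$; it says nothing a priori about the other subgroups of $S_6$ that are compatible with the observed traces of Frobenius modulo~$2$, and more basically it presupposes that the image is transitive (so that a primitive degree~$6$ subfield exists). In the $353$ case, the trace data shows only that $G\le S_6$ contains elements with cycle types $3^1$ and $3^2$, which leaves the a priori candidates $C_3^2$, $C_3{:}S_3$, $C_3\times S_3$, $C_3{:}S_3\cdot C_2$, $S_3^2$, $S_3\wr C_2$, $A_6$, $S_6$. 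The intransitive ones ($C_3^2$, $C_3{:}S_3$, $C_3\times S_3$, $S_3^2$) must be eliminated first --- in the paper this is done via Kronecker--Weber (no cyclic cubic extension of $\Q$ unramified outside $\{2,353\}$ since $353\equiv 2\pmod 3$) and by showing the unique $S_3$-extension ramified only at $2,353$ admits no unramified cyclic cubic extension over it. And for the remaining transitive group $C_3{:}S_3\cdot C_2$, Proposition~\ref{prop:discbound} does not give $\ord_{353}d_{K'}\le 1$ directly; one gets $\ord_{353}d_{K'}\in\{0,1,3\}$, and the value~$3$ must be excluded by a separate argument (that value forces an element of cycle type $2^3$, which $C_3{:}S_3\cdot C_2$ does not contain). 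Only after all of this does the enumeration of sextics via the Jones--Roberts database apply and produce the two candidate fields, one of which is eliminated by $\Frob_3$. Your proposal skips these case eliminations, and without them the conclusion that $G\simeq S_3\wr C_2$ is not established.

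The rest of your plan is correct, but note also a small imprecision at the end: for the bad primes $p=2,353$ the paper does not simply ``compare conductors''; the equality of the bad Euler factors follows from the local Langlands compatibility up to semisimplification, Theorem~\ref{thm:galoisrep}(v), applied together with the established global isomorphism $\rho_A\simeq\rho_f$.
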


\begin{proof}
The proof is similar to that of Theorem \ref{thm:proofofmodularity}, but with some slightly different arguments.
To supplement the data \eqref{eqn:firstfewL353}, we compute $a_p(f),a_{1,p^2}(f)$ for $p \leq 11$, and counting points yields equality of the additional Euler factors
\begin{equation}
\begin{aligned}
L_5(A,T) = Q_5(f,T) &= 1-T+2T^2-5T^3+25T^4, \\
L_7(A,T) = Q_7(f,T) &= 1-6T^2+49T^4, \\
L_{11}(A,T) = Q_{11}(f,T) &= 1-2T+T^2-22T^3+121T^4.
\end{aligned}
\end{equation}

Our first task is to verify that the mod $2$ representations $\overline{\rho}_{A}$ and $\overline{\rho}_{f}$ are equivalent and absolutely irreducible.  For $A$, we find the $2$-torsion field generated by the splitting field of the polynomial $x^6 + 2x^4 + 2x^3 + 5x^2 + 2x + 1$ and Galois group $S_3 \wr C_2$.  

Let $K$ be the fixed field of $\ker \overline{\rho}_{f}$ and $G\colonequals \Gal(K\,|\,\Q)$.  Since $L_3(A,T) \equiv 1+T+T^3+T^4 \pmod{2}$ we see that $G$ has an element of order $3$ or $6$ with trace $0$.  Since $L_{11}(A,T) \equiv 1+T^2+T^4$, we see $G$ has an element of order $3$ or $6$ with trace $1$.  Squaring such elements preserves their trace, so $G$ contains elements of order $3$ with either trace.  Thus $G \leq S_6$  has an
element with cycle decomposition $3^1$ and one with cycle decomposition $3^2$.  Listing all subgroups of $S_6$ with this
property, we see that $G$ must be isomorphic to one of the permutation groups
$$C_3^2, C_3:S_3, C_3 \times S_3 \textup{ (twice)}, C_3:S_3\cdot C_2, S_3^2 \textup{ (twice)}, S_3\wr C_2, A_6, S_6. $$
The subgroups in this list that are intransitive are $C_3^2,C_3:S_3,C_3 \times S_3, S_3^2$.  
The groups $C_3^2,C_3 \times S_3$ have $C_3$ as a quotient, and by the Kronecker--Weber theorem there are no $C_3$-extensions unramified outside $2$ and $353$ since $353 \equiv 2 \pmod{3}$.  The groups $C_3:S_3$ and $S_3^2$ have as quotient $S_3$, but there is a unique
$S_3$ extension ramified only at $2$ and $353$ (verified by a class field calculation and the Jones--Roberts database \cite{JonesRoberts}) defined by $x^3-x^2-6x+14$, and we compute that there are no cyclic cubic extensions of this field unramified away from primes dividing $2,353$.  This leaves the transitive groups $C_3:S_3 \cdot C_2,S_3\wr C_2, A_6, S_6$ arising as the normal closure of a degree $6$ subfield $K'$.  If $G=C_3:S_3 \cdot C_2$, then as in the proof of Proposition~\ref{prop:discbound}, we have $\ord_{353} d_{K'}=0,1,3$ but if $\ord_{353} d_{K'}=3$ then $G$ contains an element with cycle structure $2^3$, a contradiction.  Combined with Proposition~\ref{prop:discbound} in the remaining cases, we have $\ord_{353} d_{K'} \leq 1$.  Again by consulting the Jones--Roberts database \cite{JonesRoberts}, 
we find exactly two candidates, the extensions defined by $x^6 - 2x^5 + 2x^4 - x^2 + 1$ and
$x^6 - 2x^5 - 3x^4 + 4x^3 + x^2 - 6x + 1$.  In the first extension,
$\Frob_3$ has order $6$ contradicting
$Q_3(f,T) \equiv 1+T^4 \psmod{2}$, so we have the latter, and $G$ is isomorphic to
$S_3\wr C_2$.  Finally, since the trace of $\overline{\rho}_f(\Frob_3)$ equals
that of $A$, we see that the two residual images are isomorphic and
absolutely irreducible (recall that there are two embeddings of
$ S_3\wr C_2$ into $\GSp_4(\F_2)$ up to inner automorphisms, and they
differ in the trace of order $3$ and $6$ elements). 

Next, using Theorem \ref{thm:Gcore} we compute the extension $K_0$ corresponding to the core-free subgroup $C_2^2$, defined by
\begin{equation} \label{eqn:353K0x18}
x^{18} - 10x^{14} + 3x^{12} + 25x^{10} - 5x^8 - 19x^6 + 5x^2 + 1.
\end{equation}
Using computational class field theory, we list all quadratic extensions $L_0 \supseteq K_0$ unramified away from primes above $2,353$.  We find that there are $65535$ such extensions.  For each extension, we find an obstructing element; after computing for just over $5$ hours on a standard CPU (about 0.2 seconds per field) we find the list of primes 
\begin{equation} 
\{ 3,5,7,11,13,19,23,29,31,37,41,43,53,97,137\}. 
\end{equation}
(The prime $p=181$ arose from 2 extensions $L_0$ and $4$ maps $\varphi$ each looking only at cycle types, but by identifying the precise conjugacy classes we find obstructing classes for $p=5,137$.)

To conclude, using the floating point algorithm we compute $\tr \rho_f(\Frob_p)$ for all primes $p \leq 109$ as well as the primes $p=137,139,251$ (for robustness) in 29 hours on a standard CPU, and we see they agree with the traces obtained from point counts on $X$, completing the proof.
\end{proof}

\begin{example}
We pause to consider an extreme example where the refinement in section \ref{sec:instages} provides a significant improvement.  Consider the extension defined by adjoining a square root of the element
\[ -430a^{16} + 302a^{14} + 3956a^{12} - 3904a^{10} - 6944a^8 + 5348a^6 + 3628a^4 -
    1454a^2 - 510 \]
    where $a$ is a root of \eqref{eqn:353K0x18}, the defining polynomial for $K_0$.

There are $4$ outer automorphisms giving rise to possible maps $\varphi$: but in fact, we will see below that only $2$ of these maps extend $\overline{\rho}$, which is to say the other $2$ do not preserve the residual representation.
If we only consider cycle types that obstruct all $4$ possible maps $\varphi$ as in Step $4{}^\prime$, we have the types $8^4 2^2,4^6 2^2 1^8,4^2 2^{10} 1^8$.  For one of these $4$ extensions, the smallest prime $p$ with this cycle type is $p=251$.  If we push further in this extension, and look at the $L_0$-cycle type and the order in $K_0$, we compute that $p=101$ works.  Going even further and using $L_0,K_0$-cycle type, we find that $p=11$ works!
\end{example}
    
\subsection{The case \texorpdfstring{$N=587$}{Neq587}}

We conclude with one final case.  Let $X=X_{587}$ be the genus $2$ curve with
LMFDB label
\textup{\href{http://www.lmfdb.org/Genus2Curve/Q/587/a/587/1}{\textsf{587.a.587.1}}}
defined by
\[
X \colon y^2 + (x^3+x+1)y = -x^2-x
\]
and $A=A_{587}=\Jac X_{587}$, a typical abelian surface of conductor $587$ and rank $1$.  Let $f=f_{587}^- \in S_2(K(587))$ be the paramodular form constructed using \eqref{eqn:phiXi587}.  

\begin{thm} \label{thm:587}
For all primes $p$ we have $L_p(A_{587},T)=Q_p(f_{587}^-,T)$, and $A_{587}$ is paramodular.
\end{thm}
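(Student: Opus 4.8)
The plan is to mirror the proof of Theorem~\ref{thm:proofofmodularity}, applying the Faltings--Serre strategy of sections \ref{sec:faltingsserre}--\ref{sec:GSp4} with residual image $S_6$, the third of the three absolutely irreducible squarefree cases identified in Lemma~\ref{lem:s5s6s3s2}. First I would attach to $A=A_{587}$ and $f=f_{587}^-$ the $2$-adic representations $\rho_A,\rho_f\colon\Gal_{\Q,S}\to\GSp_4(\Q_2^{\al})$ with $S=\{2,587,\infty\}$ and common determinant $\chi_2^2$, via \eqref{eqn:rhoAell} and Theorem~\ref{thm:galoisrep} respectively (checking type {\bf (G)} for $f$ via \eqref{eqn:firstfewL587} and Proposition~\ref{prop:schmidt}); Lemma~\ref{lem:descentwhenirred}(a) lets us take the semisimplified residuals into $\GSp_4(\F_2)$.

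The next step is the residual identification, i.e.\ Step~1 of Algorithm~\ref{alg:detrep}. For $\overline{\rho}_A$ I would compute the $2$-torsion field $\Q(A[2])$ as the splitting field of a degree~$6$ polynomial obtained by completing the square in the model of $X_{587}$, and verify its Galois group is $S_6$ (by exhibiting elements of the requisite orders and checking transitivity). For $\overline{\rho}_f$ we only have indirect access: using $Q_2,Q_3$ from \eqref{eqn:firstfewL587} and a few more Euler factors reduced mod $2$, I would show $\img\overline{\rho}_f$ contains elements of orders $5$ and divisible by $3$, hence is one of $A_5,S_5,A_6,S_6$; then Lemma~\ref{lem:fixedfieldordp}/Proposition~\ref{prop:discbound} bound $\ord_{587}(d_{K_0})\le 1$ for the fixed field, and a Hunter search or the Jones--Roberts database \cite{JonesRoberts} pins down the field (and the embedding into $\GSp_4(\F_2)$, distinguishing the two $S_5$'s and confirming $S_6$ when that is the image) by comparing a couple of Frobenius cycle types. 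Matching $\tr\overline{\rho}_A(\Frob_p)=\tr\overline{\rho}_f(\Frob_p)$ for these small primes then gives $\overline{\rho}_A\simeq\overline{\rho}_f$ with common image $S_6$, and Lemma~\ref{lem:descentwhenirred}(b) descends both to $\GSp_4(\Z_2)$.

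Then comes the obstructing-extension computation: by Theorem~\ref{thm:Gcore}(b), for $G=S_6$ there are $7$ extension groups $E\le\Liesp_4(\F_2)\rtimes G$ with $\pi(E)=G$ (the $\det\rho_A=\det\rho_f$ hypothesis lets us use $\Lie^0=\Liesp_4$), and the core-free subgroup $H=S_3(b)^2$ gives a subextension $L_0\supseteq K_0\supseteq\Q$ with $[K_0:\Q]=|S_6|/|S_3(b)^2|=20$ and $[L_0:K_0]=2$. I would compute the degree-$20$ field $K_0$ explicitly, enumerate via computational class field theory all quadratic extensions $L_0\supseteq K_0$ unramified outside $2,587$, form the obstructing pairs $(L,\varphi)$ as in Steps $2'$--$3'$, and for each find a small witness prime using the staged cycle-type tests of section \ref{sec:instages} (first the $L_0$-cycle type, Step~$4'$; then the $(L_0,K_0)$-cycle-type pair, Step~$4''$; and full conjugacy classes, Step~$4'''$, only for the residual handful of stubborn fields). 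This produces an explicit finite set $\calP$ of primes. Finally I would compute $\tr\rho_f(\Frob_p)$ for all $p\in\calP$ by the specialization method of section \ref{sec:computehecke} (restricting $\BP(\psi)$ to a well-chosen modular curve $\phi_s$, over a finite field as in Example~\ref{example:587}, with Proposition~\ref{eigenbound} supplying the bound needed to recover the integer eigenvalue), compare with $\tr\rho_A(\Frob_p)$ from point counts on $X_{587}$, and invoke Algorithm~\ref{alg:detrep}'s correctness to conclude $\rho_A\simeq\rho_f$, hence $L_p(A,T)=Q_p(f,T)$ for all $p$ and paramodularity of $A_{587}$.

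The main obstacle I expect is the scale of the class-field-theory enumeration and the Galois-theoretic bookkeeping over the degree-$20$ field $K_0$: with $G=S_6$ the group $\Liesp_4(\F_2)\rtimes S_6$ is large, the number of quadratic extensions $L_0\supseteq K_0$ unramified outside $\{2,587\}$ will be on the order of tens of thousands, and the polynomials representing $L_0$ are unwieldy, so the bottleneck is computing (and polredabs-optimizing) these fields and efficiently extracting Frobenius cycle-type data — which is exactly why the staged approach of section \ref{sec:instages} is essential to keep the witness primes, and hence the Hecke eigenvalue computations for $f_{587}^-$ (each of which is already a nontrivial Borcherds-product restriction), within reach. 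A secondary, more delicate point is ruling out the intransitive/small candidate images for $\overline{\rho}_f$ using ramification restrictions at $587$ (note $587$ is prime, so one must check the relevant cubic/$S_3$ extension conditions), analogous to the $N=353$ argument but with the $S_6$ possibilities.
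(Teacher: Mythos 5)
Your proposal is essentially correct and follows the same route as the paper's proof: identify the residual image as $S_6$ by computing $\Q(A[2])$ and by constraining $\img\overline{\rho}_f$ via $Q_3,Q_{11}\bmod 2$ (giving elements of order divisible by $3$ and $5$, hence one of $A_5,S_5,A_6,S_6$), bound the discriminant via Proposition~\ref{prop:discbound} and consult the Jones--Roberts database, then pass to the degree-$20$ core-free field, enumerate quadratic extensions by class field theory, find witness primes by the staged cycle-type method, and compare traces computed by Borcherds-product specialization over a finite field. The one numerical detail you underestimate is the size of the enumeration: the paper finds $2^{19}-1=524287$ quadratic extensions of the degree-$20$ field $K_0$ unramified outside $\{2,587\}$ (not tens of thousands), which makes the optimization (polredabs) of defining polynomials the dominant cost (many CPU weeks), whereas the witness-prime search itself is only a few hours; the resulting prime list is $\{3,5,7,11,13,17,19,23,29,37,41\}$.
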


\begin{proof}
  We first verify that the mod $2$ representations
  $\overline{\rho}_{A}$ and $\overline{\rho}_{f}$ are equivalent and
  absolutely irreducible.  For $A$, we find the $2$-torsion field
  generated by the splitting field of the polynomial $x^6-2x^5+2x^4-x^2+2x-1$ with
  Galois group $G=S_6$.  For $f$, we have 
  \[ Q_3(f,T)=1+4T+9T^2+12T^3+9T^4 \equiv 1+T^2+T^4 \pmod{2} \]
and
  \[ Q_{11}(f,T)=1+T-T^2+11T^3+121T^4 \equiv 1+T+T^2+T^3+T^4 \pmod{2} \]
  by Poor--Yuen \cite[Table 5]{PY07} and Example~\ref{example:587}.  In
  particular, the residual image has order divisible by $3$ and $5$. 

  The subgroups of $S_6$ (up to isomorphisms) of order divisible by
  $15$ are:
\[
 A_5,
  S_5, A_6, S_6. 
\]
In all cases, there exists a polynomial of degree $5$ or $6$
unramified outside $\{2, 587\}$ and we can choose them such that the
discriminant valuation is at most $1$ at $587$ by
Proposition~\ref{prop:discbound}. By \cite{JonesRoberts} there are
only two degree $5$ polynomials with field discriminant having
valuation $1$ at $587$,
namely: $ x^5 - x^3 - x - 2$ and $x^5 + 2x^3 - 8x^2 - 13x - 8$ and two
degree $6$ polynomials with field discriminant having valuation 
$1$ at $587$:
$x^6-2x^5+2x^4-x^2+2x-1$ and $x^6-2x^5+3x^4+4x^3-2x^2-4x+2$. For the
degree $5$ polynomials, the first field has $\Frob_{3}$ of order $4$
(then it would have even trace) while $\Frob_{11}$ has order $2$ in
the second field. Regarding the degree six ones, in the second
extension $\Frob_{11}$ has order $2$, but odd trace in $A$. We deduce
that the residual representation of $f_{587}^-$ correponds then to the
same extension as $A$, and since both representations have the same
trace at $\Frob_3$, we deduce that they are indeed equivalent and
absolutely irreducible.

\medskip By Theorem \ref{thm:Gcore} we are led to compute all
quadratic extensions of the degree $20$ extension
\begin{multline}
  x^{20} + x^{18} - 4x^{17} - 3x^{16} - 2x^{15} + 7x^{14} - 6x^{13} - 18x^{12} 
  - 8x^{11} + 8x^{10}+\\ + 8x^9 - 18x^8 + 6x^7 + 7x^6 + 2x^5 - 3x^4 + 4x^3 + x^2 + 1.  
\end{multline}
We find that there are $2^{19}-1=524287$ such extensions.  Writing down minimal polynomials (not necessarily small) that represent these fields takes about 10 minutes; for convenience, we also computed optimized representatives, which took many CPU weeks.  

Finding an obstructing element for each of them, we find the list of primes to verify:
\begin{equation}
\{  3, 5, 7, 11, 13, 17, 19, 23, 29, 37, 41 \}.
\end{equation}
The total CPU time to compute this list of primes was about 2.5 hours (about 0.2 seconds per field).  Finally, we computed the corresponding traces above and they match, completing the proof.
\end{proof}

\section*{Appendix: Reduction of \texorpdfstring{$G$}{G}-covariant bilinear forms, by J.-P. Serre}
      
 \subsection*{Introduction}
 
 This note is intended as a complement to \cite{Serre:altform} where reductions of $G$-invariant bilinear forms modulo primes were studied. Indeed, in most applications to $\ell$-adic
 representations the natural bilinear forms are not $G$-invariant; they are only covariant with respect to a character of the group $G$. The simplest example  of this is the $\Q_\ell$-Tate module $V_\ell$ of an abelian variety $A$ over a field  $F$ of characteristic $\neq \ell$: a polarization of $A$ defines a nondegenerate alternating form $B$ on $V_\ell$, which is covariant under the action of the absolute
 Galois group  $\Gamma_F = \Gal(F_s/F)$, namely:
\[ B(gx,gy) = \chi_\ell(g)B(x,y)\text{\ \ for every $g \in \Gamma_F, x,y \in V_\ell$,} \]
where $\chi_\ell$ is the $\ell$-cyclotomic character.
 
   We shall see that the results of \cite{Serre:altform} extend to the covariant case, with practically
   the same proofs.

\subsection*{1. The setting}
      
   \smallskip
   
 \ni  It is almost the same as that of \cite{Serre:altform}. Namely:
    
    $G$ is a group, 
    
    $K$ is a field with a discrete valuation, 
    
    $R$ is the ring of integers of $K$,
    
    $\pi$ is a uniformizer of $K$,
    
    $ k = R/\pi R$ is the residue field,
    
    $\e \colon G \to R^\times$  is a homomorphism,
       
    $V$ is a finite dimension $K$-vector space on which $G$ acts, in such a way  that there exists an $R$-lattice of $V$ which is $G$-stable (``bounded action''),
    
    $V_k$ is the $k$-vector space obtained by the semisimplification of the $k[G]$-module $L/\pi L$, where $L$ is a $G$-stable lattice of $V$; up to isomorphism, it is independent from 
    the choice of $L$,
        
    $B$ is a symmetric (resp.\ alternating) nondegenerate $K$-bilinear form on $V$,
    which is $\e$-covariant under the action of $G$, i.e. 
    \[ B(gx,gy)= \e(g)B(x,y) \text{\ \   for $g\in G, x,y \in V$.} \tag{1.1} \]
   
\subsection*{2. Statement of the theorems}

The main theorem is the analogue of Theorem A of \cite{Serre:altform}. Namely:

\begin{theorem1*}
There exists a nondegenerate symmetric \textup{(}resp.\ alternating\textup{)} $k$-bilinear form on $V_k$ such that
\[ b(gx,gy) = \e(g)b(x,y) \text{\ \    for $g\in G, x,y \in V_k$} \tag{1.2}. \]
\end{theorem1*}  

  As in \cite{Serre:altform}, the proof will use the following complement to a classical theorem of Brauer and Nesbitt:
  
\begin{theorem2*}
Let $E$ be a finite dimensional $k[G]$-module
  endowed with a nondegenerate symmetric \textup{(}resp.\ alternating\textup{)} $k$-bilinear form $b$
  having property \textup{(1.2)}.  Then, the semisimplification $E^{\rm ss}$ of $E$
  has a $k$-bilinear form with the same properties as $b$.
  \end{theorem2*}
  
\subsection*{3. Proof of theorem 2}
  
    Use  induction on $\dim E$. Assume $E \neq 0$ and choose a minimal nonzero $G$-submodule $S$ of $E$. Let $H \subset E$ be the orthogonal subspace of $S$ with respect to $b$. Since $S$ is minimal, there
      are two possibilities:
   
   \smallskip     
         a) $H \cap S = 0$, i.e. the restriction of $b$ to $S$ is nondegenerate.
         In that case, we have $E^{\rm ss} = S \oplus  H^{\rm ss}$ and we apply
         the induction hypothesis to $H$.
     
     \smallskip    
         b) $H \cap S = S$, i.e. $S$ is totally isotropic for $b$. We have
         
  \ni      $E^{\rm ss} = (S \oplus E/H) \oplus (H/S)^{\rm ss}.$
  
  \smallskip
         
         The induction hypothesis applies to $(H/S)^{\rm ss}$. As for the first factor $S \oplus E/H$, one defines a bilinear form $b_1(x,y)$ on it by the following rule: if $x,y$ both belong to $S$, or to $E/H$, then $b_1(x,y) = 0$; if $x\in S$ and $y \in E/H$, then
         $b_1(x,y) = b(x,y')$ where $y'$ is any representative of 
      $y$ in $E$; if $x \in E/H$ and $y\in S$, then $b_1(x,y) = b_1(y,x)$ in the symmetric case and $b_1(x,y)=-b_1(y,x)$ in the alternating case. It is clear that the form $b_1$ has the required properties.

\subsection*{4. Proof of Theorem 1}
    
    The first step (\cite[Theorem 5.2.1]{Serre:altform}) is to show the existence of a lattice $L$ in $V$, which is 
    $G$-stable, and almost self-dual, i.e. $\pi L' \subset L \subset L'$, where $L'$
    is the dual of $L$ (note that formula (1.1) implies that the dual of a $G$-stable lattice is $G$-stable). This is done by choosing a $G$-stable lattice $M$, and defining
    $L$ as the ``lower middle''  $m_-(M,M') $ of $M$ and its dual $M'$ :
    
    $m_-(M,M') = $       
      smallest lattice containing $\pi^nM \cap \pi^{-n}M'$  for every $n \in \Z$.
    
   \ni It is proved in \cite[Theorem 3.1.1]{Serre:altform} that $m_-(M,M')$ is an almost self-dual lattice.    
    \smallskip
     
      The second step is to define a bilinear form $b$ on the $k$-vector space
      $E = L/\pi L' \oplus L'/L$ by using the reduction mod $\pi$ of $B$ on $L/\pi L'$, and of 
      $\pi B$ on $L'/L$. It is clear that $b$ is nondegenerate, $\e$-covariant, and symmetric 
      (resp.\ alternating) if $B$ is. By Theorem 2, the semisimplification $E^{\rm ss}$ of $E$
      has a bilinear form with the required properties. Since $E^{\rm ss}$ is isomorphic
  to $V_k$, this proves Theorem 1.

\end{document}